\documentclass[10pt, a4paper, reqno, english]{amsart}

\usepackage{amsfonts, amsthm, amsmath, amssymb, mathrsfs}
\usepackage{enumitem}
\usepackage[all]{xy} 
\usepackage{tikz}
\usepackage{mathtools}
\usetikzlibrary{matrix,arrows}

\theoremstyle{definition}
\newtheorem{defin}{Definition}
\newtheorem{example}[defin]{Example}

\theoremstyle{plain}
\newtheorem{theorem}[defin]{Theorem}
\newtheorem{prop}[defin]{Proposition}
\newtheorem{lemma}[defin]{Lemma}
\newtheorem{cor}[defin]{Corollary}
\newtheorem{construction}[defin]{Construction}

\theoremstyle{remark}
\newtheorem{remark}[defin]{Remark}

\numberwithin{defin}{section}
\numberwithin{equation}{section}

% Standard notation
\newcommand{\CC}{\mathbb{C}} %Field of complex numbers

\newcommand{\RR}{\mathbb{R}} %Field of real numbers

\newcommand{\QQ}{\mathbb{Q}} %Field of rational numbers

\newcommand{\ZZ}{\mathbb{Z}} %Ring of integers of \Q
 %Ring of integers of \Q
%\newcommand{\N}{\mathbb{N}} % natural numbers
\newcommand{\G}{\mathbb{G}} % multiplicative group
\newcommand{\A}{\mathbb{A}} % affine space
\newcommand{\PP}{\mathbb{P}} % projective space

\newcommand{\OO}{\mathcal{O}}

\newcommand{\field}{k} % base field
\newcommand{\fieldbar}{\overline{\field}} % separable closure of the base field
\newcommand{\kbar}{\fieldbar}
\newcommand{\fieldextension}{{\field'}}
\newcommand{\galois}{\mathfrak{g}} % absolute galois group of the base field
\newcommand{\galoiselement}{\text{\rm{g}}}
\newcommand{\galoiscocycle}{\sigma}
\newcommand{\compatible}{natural}

\newcommand{\group}{G} % quasitorus
\newcommand{\groupbar}{\group_{\fieldbar}} % extension of quasitorus to separable closure
 % dual of extended quasitorus
\newcommand{\groupelement}{m}

\newcommand{\Xc}{X} %% variety
\newcommand{\groupX}{\group_\Xc} 
\newcommand{\Xbar}{\Xcbar}
\newcommand{\Xcbar}{\Xc_{\fieldbar}} %% variety

\newcommand{\Yc}{Y} %% torsor
\newcommand{\Ycbar}{\Yc_{\fieldbar}} %% variety
\newcommand{\torsormor}{\pi} %% variety
\newcommand{\torsormorbar}{\overline{\torsormor}} %% variety after base extension
\newcommand{\cocycleY}{\beta}

\newcommand{\structuremor}{p}
 %% type of a given torsor Y
 % invertible sheaf, representative of an element of pic(Xbar)

\newcommand{\globalXbar}{\fieldbar[\Xcbar]}
\newcommand{\globalX}{\field[\Xc]}
\newcommand{\FfieldX}{\field(\Xc)}
\newcommand{\FfieldXbar}{\fieldbar(\Xcbar)}
\newcommand{\opensubset}{U}
\newcommand{\opensubsetbar}{\opensubset_{\fieldbar}}

\newcommand{\globalUbar}{\fieldbar[\opensubsetbar]}
\newcommand{\splitting}{\sigma}

\newcommand{\gradgroup}{M} % grading group
\newcommand{\groupmorphism}{\varphi} % moprhism of grading groups
\newcommand{\groupmorphismdual}{\widehat{\varphi}} % moprhism of grading groups
\newcommand{\gradgroupdual}{\widehat{M}} % dual of grading group
\newcommand{\dualgroupelement}{h} % element of dual of grading group
\newcommand{\gradgroupdualX}{\gradgroupdual_{\Xc}} % base extension of dual of grading group to X
\newcommand{\typecox}{\lambda} % type morphism for a Cox sheaf/ring
\newcommand{\coxsheaf}{\mathcal{R}} % Cox sheaf
\newcommand{\morphism}{\psi} % morphism of Cox sheaves/rings
\newcommand{\automorphism}{\psi} %automorphism of Cox sheaves/rings
\newcommand{\coxring}{R} % Cox ring
\newcommand{\coxisom}{\phi} % isomorphism in the definition of Cox sheaves/rings
\newcommand{\compconstant}{\alpha} % compatibility constants in the definition of Cox sheaves/rings
\newcommand{\divisor}{D} % Cartier divisor
\newcommand{\coxsec}{s} % section of Cox sheaf / element of Cox ring
\DeclareMathOperator{\CaDiv}{CaDiv} % group of Cartier divisors
\newcommand{\gradgroupdiv}{\gradgroup_{\typecox}} % group of couples (m,D) with \type(m)=[D]
\DeclareMathOperator{\divi}{div} % divisor map

\newcommand{\freegroup}{\Lambda}
\newcommand{\presentation}{\varphi}
\newcommand{\freeelement}{L}
\newcommand{\kernfreegroup}{\Lambda_0}
\newcommand{\kernelement}{E}
\newcommand{\character}{\chi}
\newcommand{\freealgebra}{\mathcal{S}}
\newcommand\freeinvsheaf[1]{\mathcal{S}_{#1}}
\newcommand{\freeideal}{\mathcal{I}}
\newcommand{\freemor}{\pi}
\newcommand{\basis}{\mathcal{B}} % basis of \freegroup

\newcommand{\typecoxeff}{\typecox_{\eff}}
\newcommand{\gradgroupeff}{\gradgroup_{\eff}}
\newcommand{\gradgroupeffdual}{\widehat{\gradgroupeff}}
\newcommand{\freegroupeff}{\freegroup_{\eff}}
\newcommand{\basiseff}{\basis_{\eff}} % basis of \freegroupeff

\newcommand{\coxsheafbar}{\coxsheaf_{\fieldbar}}
\newcommand{\coxringbar}{\coxring_{\fieldbar}}
\newcommand{\twistedcoxsheaf}[1]{\coxsheaf^{#1}}

\newcommand{\gradgroupgenerator}{\mathcal{M}} % set of generators for \gradgroup
\newcommand{\divisorsgenerator}{\mathcal{D}}
\newcommand{\gradgroupgeneratordiv}{\gradgroupgenerator_{\typecox}}

\newcommand{\freering}{S} % polynomial ring whose quotient is a Cox ring
\newcommand{\coxcoord}{\eta} % coordinate in a Cox ring

% Examples
\newcommand{\projectivemorphism}{\psi}
\newcommand{\galoisconstant}{\alpha}

% Chatelet surfaces
\newcommand{\imaginary}{\text{\rm{i}}} % square root of -1
\newcommand{\chat}{\Xc} % Chatelet surface
\newcommand{\chatbar}{\chat_{\fieldbar}} % Chatelet surface over \fieldbar
 % function field of Chatelet surface over \fieldbar
 % desingularization of dP4
\newcommand{\chatpoly}{P} % polynomials defining the Chatelet surface
 % linear factors of \chatpoly
 % two affine subvarieties that cover the Chatelet surface
 % two affine subvarieties that cover the Chatelet surface
 % associated singular dP4
 % associated singular dP4 over \fieldbar
\newcommand{\chatcoordX}{x} % coordinate for the equation of the Chatelet surface
\newcommand{\chatcoordY}{y} % coordinate for the equation of the Chatelet surface
\newcommand{\chatcoordZ}{z} % coordinate for the equation of the Chatelet surface
 % coordinate for the equation of the affine covering
 % coordinate for the equation of the affine covering
  % coordinate for the equation of the affine covering
 % coordinate for the equation of the associated singular dP4
  % coefficients of the \chatlinpoly
  % coefficients of the \chatlinpoly
 % singular points on dP4
 % lines on dP4
 % lines on Chatelet surface
 % relations between divisors

\newcommand{\chatcoxbar}{\overline{R}} % cox ring of the Chatelet surface over \fieldbar
 % coordinates of cox ring over \fieldbar
 % coefficients of cox rings equations
\newcommand{\chatcox}{R} % cox ring of the Chatelet surface
 % equations of cox ring
 % equations of cox ring
 % variable of cox ring over \field
 % variable of cox ring over \field
 % complex conjugation in galois group
 % neron severi torus of chatelet surface
 % coefficient of twisted cox ring
%\newcommand{\chatcoxcoordinj}[1]{\zeta_{#1}} % coefficients cox ring of injective type

 % coordinate for the equation of the Chatelet surface
 % coordinate for the equation of the Chatelet surface
 % coordinate for the equation of the Chatelet surface
 % coordinate for the equation of the affine covering
 % coordinate for the equation of the affine covering
  % coordinate for the equation of the affine covering
 % coordinate for the cox ring of injective type

% singular dP4 of type D4
 % singular surface

 % minimal desingularization
 % minimal desingularization
\newcommand{\dcoordX}{x}

\newcommand{\ddiv}{D}

\newcommand{\dcoordfield}{\xi}

\newcommand{\Dfour}{{\mathbf D}_4}

\DeclareMathOperator{\identity}{id}
\DeclareMathOperator{\pic}{Pic}
\DeclareMathOperator{\Pic}{Pic}
\DeclareMathOperator{\Cl}{Cl}
\DeclareMathOperator{\spec}{Spec}

\DeclareMathOperator{\Supp}{Supp}

\DeclareMathOperator{\Hom}{Hom}
\DeclareMathOperator{\gal}{Gal}
\DeclareMathOperator{\type}{\texttt{type}}
\DeclareMathOperator{\eff}{eff}
\DeclareMathOperator{\Aut}{Aut}
\DeclareMathOperator{\weilres}{Res} % Weil restriction

\begin{document}

\title{Cox rings over nonclosed fields}

\author{Ulrich Derenthal} 

\address{Institut f\"ur Algebra, Zahlentheorie und Diskrete
  Mathematik, Leibniz Universit\"at Hannover,
  Welfengarten 1, 30167 Hannover, Germany}
\email{derenthal@math.uni-hannover.de}
\author{Marta Pieropan} 

\address{EPFL SB MATH CAG, B\^at. MA, Station 8, 1015 Lausanne, Switzerland}
\email{marta.pieropan@epfl.ch}
\date{September 3, 2018}

\keywords{Cox ring, Cox sheaf, universal torsor}
\subjclass[2010]{14C20 (11G35, 14L30)}
% %14C20: Divisors, linear systems, invertible sheaves
% %11G35: Varieties over global fields
% %14L30: Group actions on varieties or schemes (quotients)

\setcounter{tocdepth}{1}

\begin{abstract}
  We give a definition of Cox rings and Cox sheaves for varieties over
  nonclosed fields that is compatible with torsors under quasitori,
  including universal torsors. We study their existence and
  classification, we make the relation to torsors precise, and we
  present arithmetic applications.
\end{abstract}

\maketitle

\tableofcontents

\section{Introduction}

Starting in the 1970s, Colliot-Th\'el\`ene and Sansuc studied torsors under
quasitori, i.e., under groups of multiplicative type. In
particular, they introduced \emph{universal torsors} of varieties with
finitely generated geometric Picard group, which were used to study
rational points and other arithmetic questions on geometrically
rational varieties over number fields; see \cite{MR899402,MR1845760}.

In the 1990s, Cox \cite{MR1299003} constructed homogeneous coordinate rings of
toric varieties over the field $\CC$ of complex numbers.  Salberger
\cite{MR1679841} used them in connection with universal torsors to study
rational points on split toric varieties over number fields.  Homogeneous
coordinate rings, or \emph{Cox rings}, were introduced for more general
classes of varieties by Hu, Keel \cite{MR1786494}, and others
\cite{MR2058459,MR1995130,MR2499353}.  Soon it was observed informally that
there should be a close connection to universal torsors (see the introductions
of \cite{MR2029863,MR2029868,MR2368955}).  So far, this connection has been
made precise only over algebraically closed fields
\cite[Theorem~5.6]{MR2498061}, \cite[\S 1.6]{arXiv:1003.4229}.

The problem of descending Cox rings to nonclosed fields was posed already in
\cite{MR2029863,MR2029868}.  The purpose of this article is to define Cox
rings and Cox sheaves for varieties over fields that are not necessarily
algebraically closed in a way that is compatible with the definition of
universal torsors and, more generally, of torsors under quasitori.

We start by revisiting and generalizing the definition and construction of Cox
rings and Cox sheaves over separably closed fields; see
Section~\ref{section:over_closed_fields}.  Our axiomatic definition allows us
to study their existence and classification over arbitrary fields, their
functoriality properties, and their precise relation to torsors; see
Section~\ref{section:over_nonclosed_fields}.  We conclude with some properties
of finitely generated Cox rings nd their arithmetic applications in
Sections~\ref{section:finitely_generated_cox_rings} and
\ref{sec:applications}, respectively.

It is an active field of research to determine generators and
relations of Cox rings over algebraically closed fields for many classes
of varieties. Since we also provide explicit methods to derive a
description of Cox rings and torsors over nonclosed fields from these,
we hope that our work lays the foundation for further arithmetic
applications of the vast literature on Cox rings.

\subsection{Motivation}\label{sec:motivation}

Torsors are central tools in the study of rational
points on a variety $X$ over a number field $k$. To study the Hasse principle
and weak approximation, often a \emph{local} description of torsors is
sufficient; see \cite[\S 3]{MR899402} and \cite[Part~2]{MR1845760}, for
example. Such a local description over sufficiently small open subsets $U$ of
$X$ can be determined as in \cite[\S 2.3]{MR899402}.

A \emph{global} description of torsors is needed for other applications, such
as to Manin's conjecture on the distribution of rational
points on Fano varieties \cite{MR974910}. In this application, 
initiated in \cite{MR1679841}, rational
points on a Fano variety $X$ are parameterized by integral points on its
universal torsors, and the necessary explicit global description of
universal torsors should be provided by Cox rings once their
generators and relations are known.

However, the relation between Cox rings and the parameterization of
rational points used in proofs of special cases of Manin's conjecture
is rarely made precise; of course the lack of a suitable definition of
Cox rings over nonclosed fields until now is a fundamental reason for
this. In many cases such as \cite{MR2332351}, a parameterization is
obtained by elementary manipulations of the equations defining $X$. If
$X$ is \emph{split} (i.e., the natural map $\pic(X) \hookrightarrow
\pic(\Xcbar)$ from the Picard group over $k$ to the geometric Picard group 
is an isomorphism), one typically observes that the
resulting auxiliary equations are the relations in a Cox ring of $X$,
at least over an algebraic closure $\kbar$ of $k$, and then one
expects that the parameterization is induced by a universal
torsor. For \emph{nonsplit} $X$, the geometric interpretation of the
parameterization sometimes remains completely unclear, as in
\cite{MR2373960}. In other cases such as \cite{MR2874644,MR3198752},
(universal) torsors are determined rigorously, while Cox rings remain
behind the scenes or are only briefly mentioned.

The only cases where the role of Cox rings is made rigorous seem to be
split toric varieties \cite{MR1679841} and split weak del Pezzo
surfaces \cite{arXiv:1312.6603}. For these classes of varieties, the
explicit description of Cox rings from
\cite{MR1299003,MR2029868,math.AG/0604194} is successfully applied to
prove many cases of Manin's conjecture; see
\cite{MR1679841,arXiv:1505.05789,arXiv:1312.6603} and the references
in \cite[\S 6.4.1]{arXiv:1003.4229}.

In this article, we develop a theory of Cox rings over nonclosed
fields that is suitable to handle torsor parameterizations of rational
points for not necessarily split varieties. In order to deal with
parameterizations by non-universal torsors, we introduce the notion of
Cox rings of a given \emph{type}, which encode a global description of
torsors under quasitori of the same \emph{type}.

\subsection{Cox rings of arbitrary type}\label{subsection:generalized_cox_rings}

In \cite{MR899402}, torsors under quasitori are classified in terms of their
\emph{type}.  Let $\field$ be a field with separable closure $\fieldbar$ and
Galois group $\galois=\gal(\fieldbar/\field)$. We recall that there is an
antiequivalence of categories between quasitori over $\field$ (i.e., smooth
groups of multiplicative type of finite type) and finitely generated
$\galois$-modules (i.e., $\galois$-modules that are finitely generated as
abelian groups) with torsion coprime to the characteristic of $\field$, which
sends a quasitorus $G$ to its group of characters
$M\coloneqq \Hom_{\fieldbar}(\groupbar,\G_{m,\fieldbar})$.

Let $X$ be a variety over $\field$ such that $\Xbar$ has only constant
invertible regular functions.  Torsors over $X$ under a quasitorus $G$
with group of characters $M$ are classified by the middle term of the
exact sequence
\begin{equation}\label{eq:exact_sequence_type}
  0\rightarrow H^1_{\text{\it{\'et}}}(\field,\group) \rightarrow
  H^1_{\text{\it{\'et}}}(\Xc,\groupX)\stackrel{\type}
  {\longrightarrow} 
  \Hom_{\galois}(M,\pic(\Xcbar))
  \rightarrow H^2_{\text{\it{\'et}}}(\field,\group),
\end{equation}
from \cite[Th\'eor\`eme 1.5.1]{MR899402}. The map $\type$ sends the
isomorphism class of a torsor $Y$ to its \emph{type}, namely to the
homomorphism $\lambda\colon M \to \Pic(\Xcbar)$ of $\galois$-modules that
sends a character $\chi\in M$ to the isomorphism class of the
pushforward $\chi_*Y$ in
$H^1_{\text{\it{\'et}}}(\Xcbar,\G_{m})=\pic(\Xcbar)$; see \cite[\S
2.0]{MR899402}. By definition, a universal torsor is a torsor of
\emph{identity type} (i.e., $\gradgroup=\pic(\Xcbar)$ and
$\lambda=\identity_{\pic(\Xcbar)}$).

In Definitions~\ref{def:cox_ring} and \ref{def:cox_sheaf_nonclosed}, we
introduce the notion of Cox rings of a given \emph{type}
\begin{equation*}
  \lambda\colon \gradgroup\to\pic(\Xcbar),
\end{equation*}
where $\gradgroup$ is a finitely
generated $\galois$-module whose torsion is coprime to the characteristic of
$\field$.  This is a $k$-algebra $R$ with an $M$-grading on $R_{\fieldbar}$
such that, for every $m \in M$, its degree-$m$-part is isomorphic to
$H^0(X,\OO(D))$ for a divisor $D$ of class $\lambda(m)$; the main difficulty
is to define a multiplication.  In particular, a Cox ring of identity type is
a Cox ring in the original sense.

Below, our results are all stated for Cox rings and Cox sheaves of
arbitrary type $\lambda$. The statements for ordinary Cox rings and
universal torsors are then recovered by taking the identity type
$\gradgroup\coloneqq \pic(\Xcbar)$ and $\lambda\coloneqq \identity_{\pic(\Xcbar)}$.

\subsection{Cox rings over closed fields}\label{sec:intro_def}

In the literature, a Cox ring (of identity type) of a variety $X$ over
an algebraically closed field $k$ is a $\pic(X)$-graded $k$-algebra of
the form
\begin{equation}\label{eq:cox_informal}
  R = \bigoplus_{\Pic(X)} H^0(X, \OO(D)),
\end{equation}
where $D$ runs through a set of divisors forming a system of representatives
of $\Pic(X)$. While the structure of a $\Pic(X)$-graded $k$-vector space is
clear, there is no canonical way to define a multiplication on $R$ that turns
it into a graded $k$-algebra.

If $X$ is smooth and projective with free Picard group, there is a well-known
construction of Cox rings (see \cite{MR2029868,MR2278756,MR2498061,MR2824848},
for example) that depends on the choice of divisors whose classes form a
$\ZZ$-basis of $\Pic(X)$.  It is important to note that a different choice of
divisors leads to a Cox ring that is \emph{noncanonically} isomorphic to the
previous one; this makes descending Cox rings an interesting problem (see
\cite{MR2029863,MR2029868}).

When the Picard group is not free, one can construct Cox rings by
choosing divisors whose classes generate $\Pic(X)$ together with a
\emph{shifting family} or an \emph{identifying character}; see
\cite{MR1995130}, \cite[\S 1.4.2]{arXiv:1003.4229}.
Another option is to work with the choice of a $k$-rational
point on $X$ as in \cite[Definition~1.1]{MR2029863} and
\cite[Construction~1.4.2.3]{arXiv:1003.4229}.

In our approach to Cox rings over closed fields (see
Definition~\ref{def:cox_ring}), two aspects are new: The first novelty is that
our definition is axiomatic, while the previously existing definitions can be
interpreted as constructions; this allows us to determine automorphisms, to
study Galois descent to nonclosed fields, and to phrase and answer existence
and uniqueness questions. Our axioms emphasize the role of the divisors
associated with every homogeneous element of a Cox ring. The second novelty is
that we define Cox rings of arbitrary \emph{type}.

Then we adapt the previous constructions of Cox rings to the case of
arbitrary type (see Construction
\ref{construction:cox_sheaf_abstract}) and we show that our axioms are
satisfied precisely by the rings obtained from such constructions.  
In the proof of Theorem~\ref{thm:splttings} below, we present several
other constructions of Cox rings that seem to be new even over
closed fields and are inspired by the theory of torsors.

We recall that one can also construct Cox rings graded by the Weil
divisor class group $\Cl(X)$ instead of $\pic(X)$; see
\cite{MR2058459,arXiv:1003.4229}.  If $X$ is singular, $\Cl(X)$ may
differ from $\pic(X)$, and then these Cox rings correspond to
\emph{good quotients} $Y\to X$ that are not torsors; see
\cite[\S4.2.1]{arXiv:1003.4229}. Hence we only consider
$\pic(X)$-graded Cox rings.

Some articles such as \cite{MR1786494,MR3275656,MR3352043} study Cox
rings defined via a choice of divisors forming a $\QQ$-basis of
$\Pic(X) \otimes_\ZZ \QQ$ (or $\Cl(X) \otimes_\ZZ \QQ$). Here,
different choices generally lead to nonisomorphic results.  This is
enough to study finite generation of the resulting rings, but it is
clearly not suitable to investigate torsors.

\subsection{The definition over nonclosed fields}
We define Cox rings of varieties over nonclosed fields by Galois descent.
Since Cox rings over separably closed fields may have nontrivial automorphisms 
(see Proposition \ref{prop:cox_sheaf_aut}), Galois descent to nonclosed fields 
is a nontrivial task, if possible at all.

Let $X$ be a variety over a field $k$ with separable closure $\overline k$.
In Section \ref{section:over_nonclosed_fields}, we define a Cox ring of $X$ of
type $\lambda$ to be a $k$-form $R$ of a Cox ring of $\Xcbar$ of type
$\lambda$ such that the action of the Galois group
$\galois \coloneqq  \gal(\fieldbar/\field)$ on $\coxring \otimes_\field \fieldbar$ is
compatible with the Cox ring structure in the sense of our Definition
\ref{def:galois_action}.  This definition, together with the axiomatic
definition of Cox rings over separably closed fields, enables us to study the
classification and existence of Cox rings over nonclosed fields.

\subsection{Classification and existence}

Our first main result concerns the relation between Cox rings, Cox sheaves 
and torsors under quasitori, and yields the classification of Cox rings over 
nonclosed fields.

\begin{theorem}\label{introduction:theorem:classification}
  Let $k$ be a field. Let $\galois\coloneqq \gal(\kbar/k)$, where $\kbar$ is a
  separable closure of $k$.  Let $X$ be a $\field$-variety with
  $\globalXbar^\times=\kbar^\times$. Let $\gradgroup$ be a finitely generated
  $\galois$-module whose torsion is coprime to the characteristic of $k$, and
  $\typecox\colon \gradgroup\to\Pic(\Xbar)$ a $\galois$-equivariant homomorphism.
  
  The contravariant functor
  \begin{align*}
    \big\{\text{Cox sheaves of $\Xc$ of type $\typecox$}\big\}
    & \longrightarrow \big\{\text{$\Xc$-torsors of type $\typecox$}\big\}\\
    \coxsheaf\quad
    & \longmapsto\quad \spec_{\Xc}\coxsheaf
  \end{align*}
  is an anti-equivalence of categories.

  If the image of $\typecox$ is generated by effective divisor
  classes, then the covariant functor
  \begin{align*}
    \big\{\text{Cox sheaves of $\Xc$ of type $\typecox$}\big\}
    &\longrightarrow \big\{\text{Cox rings of $\Xc$ of type $\typecox$}\big\}\\
    \coxsheaf\quad
    & \longmapsto\quad \coxsheaf(\Xc)
  \end{align*}
  is an equivalence of categories.
\end{theorem}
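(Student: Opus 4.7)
The plan is to construct explicit quasi-inverses for each of the two functors, reducing to the separably closed case of Section~\ref{section:over_closed_fields} through Galois descent whenever feasible.

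For the anti-equivalence between Cox sheaves and torsors, the inverse of relative Spec should be the direct image functor. Given an $\Xc$-torsor $\torsormor: \Yc\to\Xc$ under the quasitorus $\group$ with character module $\gradgroup=\groupbardual$, the sheaf $\torsormor_*\OO_{\Yc}$ is an $\OO_{\Xc}$-algebra with a natural $\group$-action, and hence decomposes as an algebra graded by $\gradgroup$. After base change to $\fieldbar$, the torsor becomes fppf-locally trivial and the $m$-weight piece of $\torsormor_*\OO_{\Yc}\otimes_\field\fieldbar$ is an invertible sheaf whose class in $\pic(\Xcbar)$ equals $\typecox(m)$ by the very definition of type. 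Combining this with Galois descent verifies that $\torsormor_*\OO_{\Yc}$ satisfies the axioms of a Cox sheaf of $\Xc$ of type $\typecox$ over $\field$. Conversely, given a Cox sheaf $\coxsheaf$, the $\gradgroup$-grading dualizes to a $\group$-action on $\spec_\Xc\coxsheaf$, and the torsor property is checked fppf-locally on $\Xcbar$, reducing it to the statement that an $\OO_{\Xcbar}$-algebra whose weight pieces are compatibly invertible is locally a group algebra. Mutual invertibility of these constructions and their functoriality are then formal consequences of the relative Spec / direct image adjunction combined with the weight-decomposition for quasitorus actions.

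For the equivalence between Cox sheaves and Cox rings in the effective case, taking global sections is one direction. To construct the inverse, I would start from a Cox ring $\coxring=\bigoplus_{m\in\gradgroup}\coxring_m$ and reconstruct an $\OO_\Xc$-algebra sheaf by gluing localizations $\coxring[s^{-1}]$ (with appropriate degree shifts) over the distinguished open subsets $\Xc_s\subset\Xc$ cut out by homogeneous sections $s\in\coxring_m$ whose divisor is effective with class in $\typecox(m)$. The hypothesis that $\typecox(\gradgroup)$ is generated by effective classes, combined with the invertibility of the graded pieces, guarantees that such open subsets cover $\Xc$ (also after base change to $\fieldbar$), so that this is a well-posed definition. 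Verifying the compatibility on overlaps, Galois equivariance, and the fact that global sections recover $\coxring$ up to canonical isomorphism reduces essentially to the corresponding statement over $\fieldbar$, where it can be deduced from the results of Section~\ref{section:over_closed_fields}.

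I expect the main obstacle to be the torsor property of $\spec_\Xc\coxsheaf\to\Xc$ without local factoriality: as the introduction points out, $\Cl(\Xcbar)$-graded Cox sheaves generally yield only good quotients rather than torsors. The fact that our $\pic$-graded Cox sheaves avoid this pathology is intimately tied to the axiom that each graded piece is invertible (not merely divisorial), and one must leverage this axiom carefully to exhibit local trivializations of $\spec_\Xc\coxsheaf$. A secondary difficulty is tracking the Galois action throughout: Cox sheaves of type $\typecox$ over $\field$ are defined by descent conditions, and one must show that both functors preserve and reflect these conditions, rather than merely the $\fieldbar$-level structure.
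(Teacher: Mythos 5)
Your treatment of the first functor is essentially the paper's: Proposition \ref{prop:cox_sheaves_torsors} proves both directions over $\fieldbar$ exactly along the lines you sketch (weight decomposition of $\torsormor_*\OO_\Yc$, the cocycle computation identifying the type, local triviality of $\spec_\Xc\coxsheaf$ over opens trivializing $\typecox(\gradgroup)$), and Propositions \ref{prop:cox_sheaves_torsors_nonclosed} and \ref{prop:torsors_cox_sheaves_nonclosed} together with Corollary \ref{cor:bijection_torsors_cox_sheaves} carry this through Galois and \emph{fpqc} descent, using affineness of torsor morphisms to get $\spec_\Xc\torsormor_*\OO_\Yc\cong\Yc$. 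That half is sound.

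The quasi-inverse you propose for the global-sections functor has a genuine gap. You want to rebuild $\coxsheaf$ from $\coxring$ by gluing localizations $\coxring[s^{-1}]$ over the opens $\Xc_s=\Xc\smallsetminus\Supp(\divi(s))$, and you claim that effectivity of the generators of $\typecox(\gradgroup)$, plus invertibility of the graded pieces, forces these opens to cover $\Xc$. Neither half of this works under the hypotheses of the theorem. The covering claim fails: take $\gradgroup=\ZZ$ and $\typecox(1)=[C]$ for an irreducible curve $C$ with $C^2<0$ on a projective surface (a $(-2)$-curve on a K3 surface, say). Then every effective divisor linearly equivalent to $nC$ with $n\geq 1$ equals $nC$, so every nonzero homogeneous $s$ of nonzero degree has $C\subseteq\Supp(\divi(s))$, and the only distinguished open meeting $C$ is $\Xc_s=\Xc$ for $s\in\coxring_0=\field$, which carries no local information; yet the Cox sheaf $\bigoplus_{n}\OO_\Xc(nC)$ is genuinely nontrivial. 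Moreover, even where the $\Xc_s$ do cover, they need not be affine, and a ring $\coxring[s^{-1}]$ does not by itself determine a quasi-coherent $\OO_{\Xc_s}$-algebra; the paper obtains a statement of this Proj-like flavour only in Proposition \ref{prop:open_immersion}, under the \emph{additional} hypotheses that $\coxring$ is finitely generated and that suitable complements of supports are affine and cover $\Xc$ --- none of which is assumed in Theorem \ref{introduction:theorem:classification}. The paper's reconstruction (Proposition \ref{prop:cox_ring_cox_sheaf}, fed into Corollary \ref{cor:cox_ring_cox_sheaf_nonclosed} via Propositions \ref{prop:equiv_cox_ring_sheaf} and \ref{prop:galois_descent_cox_rings_sheaves}) avoids localization entirely: one presents $\gradgroup$ by a free group $\freegroup$, forms $\freealgebra=\bigoplus_{\freeelement\in\freegroup}\OO_\Xc(\divisor_\freeelement)$ inside the constant sheaf $\FfieldX$, extracts a character $\character:\kernfreegroup\to\FfieldX^\times$ from the multiplication constants of $\coxring$, and sets $\coxsheaf:=\freealgebra/\freeideal$; the effectivity hypothesis enters only to define those constants (one needs nonzero graded pieces to compare) and to obtain full faithfulness via Proposition \ref{prop:morphism_cox_ring_cox_sheaf}, not to produce a covering. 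You would need to replace your gluing step by an argument of this kind.
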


Conversely, a torsor $\pi\colon  Y \to X$ of type $\typecox$ corresponds to
the Cox sheaf $\pi_*\OO_Y$ and to the Cox ring $\OO_Y(Y)$ of the same
type.  Given a Cox ring, it is more involved to recover the
corresponding Cox sheaf and torsor; see
Sections~\ref{section:over_nonclosed_fields} and
\ref{section:finitely_generated_cox_rings}.

Because of the existence of nontrivial automorphisms, a variety $X$ over a
nonclosed field $k$ might have no Cox ring (see
Example~\ref{exa:nonexistence}), or it might have several nonisomorphic Cox
rings of the same type, which can be obtained from one such Cox ring by
twisting by cocycles. 

Our second main result gives existence criteria for Cox rings:

\begin{theorem}\label{thm:splttings}
  Let $k$ be a field. Let $\galois\coloneqq \gal(\kbar/k)$, where $\kbar$ is a
  separable closure of $k$.  Let $X$ be a $\field$-variety with
  $\globalXbar^\times=\kbar^\times$. Let $\gradgroup$ be a finitely generated
  $\galois$-module whose torsion is coprime to the characteristic of $k$, and
  $\typecox\colon \gradgroup\to\Pic(\Xbar)$ a $\galois$-equivariant homomorphism.
  \begin{enumerate}[label=(\Alph*), ref={\it{(\Alph*)}}]
  \item\label{it:character} Given two finite $\galois$-invariant sets
    $\gradgroupgenerator \subseteq M$ and
    $\divisorsgenerator \subseteq \CaDiv(\Xbar)$ such that $\gradgroupgenerator$
    generates $M$ and $\lambda(M)=\{[D]\colon D\in\divisorsgenerator\}$, we consider
    the projections
    \begin{equation*}
      \varphi_{\gradgroupgenerator}\colon  \bigoplus_{\substack{(m,D) \in
          \gradgroupgenerator\times\divisorsgenerator\\
          \typecox(\groupelement)=[\divisor]}}\ZZ(m,D) \to \gradgroup,
      \qquad
      \varphi_{\divisorsgenerator}\colon \bigoplus_{\substack{(m,D) \in
          \gradgroupgenerator\times\divisorsgenerator\\
          \typecox(\groupelement)=[\divisor]}}\ZZ(m,D) \to \CaDiv(\Xbar).
    \end{equation*}
    
    Then Cox sheaves and torsors of type $\lambda$ over $X$ exist if and only if,
    for one (equivalently, for every) pair
    $\gradgroupgenerator,\divisorsgenerator$ as above, there exists a
    $\galois$-equivariant group homomorphism
    \begin{equation*}
      \chi\colon \ker(\varphi_{\gradgroupgenerator})\to \kbar(X)^\times
    \end{equation*}
    such that the principal divisor defined by
    $\chi(x)$ is $\varphi_{\divisorsgenerator}(x)$ for all 
    $x\in\ker(\varphi_{\gradgroupgenerator})$.
     
  \item\label{it:splittings} For the existence of Cox sheaves, Cox rings and
    torsors of type $\typecox$ over $X$, the existence of one of the following
    suffices.
    \begin{enumerate}[label=(\arabic*), ref={\it{(\arabic{*})}}]
    \item\label{it:splitting_kX} A $\galois$-equivariant splitting of the
      natural exact sequence of $\galois$-modules
      \begin{equation*}
        1 \to \kbar^\times \to \kbar(\Xbar)^\times \to \kbar(\Xbar)^\times/\kbar^\times \to 1.
      \end{equation*}
    \item\label{it:splitting_kU} An open subset $U \subseteq X$ 
      such that $\typecox(\gradgroup)$ is generated by classes of Cartier divisors on
      $\Xbar$ supported outside $U_{\kbar}$, with a $\galois$-equivariant
      splitting of the natural exact sequence of $\galois$-modules
      \begin{equation*}
        1 \to \kbar^\times \to \kbar[U_{\kbar}]^\times
        \to \kbar[U_{\kbar}]^\times/\kbar^\times \to 1.
      \end{equation*}
    \item\label{it:rational_point} A $k$-rational point on $X$.
    \end{enumerate}
    
    Conversely, if $\Xcbar$ is locally factorial and $\typecox$ is
    injective, then the existence of splittings as in
    (\ref{it:splitting_kU}) for all suitable $U$ is necessary for the
    existence of Cox sheaves or torsors of $X$ of type $\typecox$.  If
    $k$ is perfect and $\Xcbar$ is locally factorial, then the existence
    of a splitting as in (\ref{it:splitting_kX}) is necessary for the
    existence of Cox sheaves of type $\identity_{\pic(\Xcbar)}$ and
    universal torsors of $X$.
  \item\label{it:cox_sheaves_rings} If $\typecox(\gradgroup)$ is generated by effective divisor classes,
    then the existence of Cox sheaves of $X$ of type $\typecox$ is necessary for
    the existence of Cox rings of $\Xc$ of type $\typecox$.
  \end{enumerate}
\end{theorem}

Part \ref{it:character} is based on a Galois-equivariant version of
Construction~\ref{construction:cox_sheaf_abstract} combined with
Galois descent.  If $X$ is smooth, then part \ref{it:splittings} follows from the
existence criteria for torsors from \cite[\S 2.2]{MR899402} via
Theorem~\ref{introduction:theorem:classification} ; in particular, the
criterion \ref{it:splittings}\ref{it:splitting_kX} is the vanishing of the \emph{elementary obstruction}
\cite[Definition~2.2.1]{MR899402}.  In our proof, however, we take the
criteria in \cite[\S 2.2]{MR899402} only as guidelines for new
Galois-equivariant constructions of Cox rings over $\kbar$; then
Galois descent gives Cox rings over $k$ (see
Constructions~\ref{construction:cox_sheaf_splitting_U} and
\ref{construction:rational_point_cox_sheaf}). This leads to a proof of
the existence criteria \ref{it:splittings} for Cox rings that avoids the machinery
from \cite{MR899402} and also applies to singular varieties; then
Theorem~\ref{introduction:theorem:classification} transfers the
existence criteria to torsors in this more general setting.

Note that the existence of a $k$-rational point is not necessary for the
existence of Cox rings and Cox sheaves on $X$; see
Example~\ref{exa:cox_ring_no_rational_point}.

An interpretation of universal torsors in terms of gerbes is given in
\cite[\S 13]{arXiv:1610.07341}.

\subsection{Applications}

In the last sections, we study Cox rings that are finitely generated as
$k$-algebras because such a Cox ring $R$ gives a global description of
the corresponding torsor as an open subset of the affine variety
$\spec R$.  We recall that there is a vast and growing literature
computing finitely generated Cox rings of identity type in terms of
generators and relations over algebraically closed fields. 

In Section \ref{section:finitely_generated_cox_rings}, we make precise how a
Cox ring can be described by generators and relations (note that it is not
enough to describe divisors corresponding to the generators, but that one
needs a suitable \emph{character} to obtain a well-defined object). We show
that a Cox ring $R'$ of arbitrary type can be obtained as a pullback of a Cox
ring $R$ of identity type, and we explain a method to produce generators and
relations of $R'$ from those of $R$ if the latter is finitely generated.

In Section \ref{sec:applications}, we we give examples where the
parameterizations appearing in proofs of Manin's conjecture for some nonsplit
varieties \cite{MR2373960,MR2874644} can be interpreted in terms of Cox rings
of type $\pic(\Xc)\hookrightarrow\Pic(\Xcbar)$.  Recent work of Destagnol
\cite{arXiv:1509.07060,arXiv:1711.01882} applies our techniques to compute Cox
rings of a different injective type to the computation of Peyre's constant in
the asymptotic formula predicted by Manin's conjecture for some Ch\^atelet
surfaces. In particular, our theory allows to complete the comparison of the
leading constant in \cite{MR3103132} with Peyre's constant (see \cite[\S
5.2]{arXiv:1711.01882}).

For algebraic-geometric applications of our theory, see \cite{MR3531747} and
\cite{arXiv:1708.02797}.

\subsection*{Acknowledgements}

The authors were supported by grants DE 1646/3-1 and ES 60/10-1 of the
Deutsche Forschungsgemeinschaft.  We thank J.-L.~Colliot-Th\'el\`ene,
D.~A.~Cox, J.~Hausen, A.~N.~Skorobogatov, and the referees for useful
remarks and discussions, and we thank S.~Keicher for explaining the
use of the Maple package \cite{software_Cox_rings} to us.
Example~\ref{exa:quintic_dp} is based on a discussion with
D.~Loughran.

\section{Over separably closed fields}\label{section:over_closed_fields}

We start by fixing our setting and some notation.  In this section, we work
over a separably closed field $\field$.  Let $\Xc$ be an integral
$\field$-variety.  We always assume that $\Xc$ has only constant invertible
regular functions (i.e., $\globalX^\times=\field^\times$, where $\globalX$
denotes the ring of global sections $H^0(\Xc,\OO_\Xc)$ of the structure sheaf
of $\Xc$).

Let $\CaDiv(\Xc)$ be the group of Cartier divisors of $\Xc$. For every divisor
$\divisor\in\CaDiv(\Xc)$, we denote by $[\divisor]$ its class in $\Pic(\Xc)$.
Given a Cartier divisor $\divisor=\{(U_i,f_i)\}_i$, its associated sheaf
$\OO_\Xc(\divisor)$ is the invertible sheaf obtained by gluing
$f_i^{-1}\OO_{U_i}$.  This is a subsheaf of the constant sheaf associated with
the function field $\FfieldX$ of $\Xc$. Therefore, given two Cartier divisors
$\divisor_1,\divisor_2$ and sections
$\coxsec_i\in H^0(U,\OO_\Xc(\divisor_i))$, $i\in\{1,2\}$, the product
$\coxsec_1\coxsec_2\in H^0(U,\OO_\Xc(\divisor_1+\divisor_2))$ is well-defined.
Given a Cartier divisor $\divisor=\{(U_i,f_i)\}_i$ and an element
$f\in \FfieldX^\times$, we denote by $\divi_\divisor(f)$ the Cartier divisor
$\{(U_i,ff_i)\}_i$. Note that $f \in H^0(\Xc,\OO(\divisor))$ if and only if
$\divi_\divisor(f)$ is an effective divisor.  The support of a Cartier divisor
$D=\{(U_i,f_i)\}_i$ is
\begin{equation*}
  \Supp(D)\coloneqq \{x\in X\colon  f_i\notin\OO_{X,x}^\times \text{ if } x\in U_i\}.
\end{equation*}

We fix a finitely generated abelian group $\gradgroup$ whose torsion
is coprime to the characteristic of $\field$ (i.e.,~the characteristic
of $\field$ does not divide the order of its torsion subgroup), and a
group homomorphism $\typecox\colon \gradgroup\to\pic(\Xc)$, which will be
the \emph{type} of our Cox rings, Cox sheaves and torsors. We define
\begin{equation*}
  \gradgroupdiv\coloneqq \{(\groupelement,\divisor)\in
  \gradgroup\times\CaDiv(\Xc):  [\divisor]=\typecox(\groupelement)\}.
\end{equation*}
We denote by $\gradgroupdual\coloneqq \spec\field[\gradgroup]$ the quasitorus
dual to $\gradgroup$ under the antiequivalence of categories recalled
in Section~\ref{subsection:generalized_cox_rings}.

\begin{defin}\label{def:cox_ring}
  A \emph{Cox ring of $\Xc$ of type $\typecox$} is an $\gradgroup$-graded
  $\field$-algebra $\coxring$ together with a map
  \begin{equation*}
    \divi\colon \bigcup_{\groupelement\in\gradgroup}(\coxring_{\groupelement}\smallsetminus\{0\})
    \to\CaDiv(\Xc),
  \end{equation*}
  where $\coxring_{\groupelement}$ denotes the degree-$\groupelement$-part of
  $\coxring$, such that, for each $(\groupelement,\divisor)\in\gradgroupdiv$,
  there exists a $\field$-vector space isomorphism
  $\coxisom_{\groupelement, \divisor}\colon \coxring_{\groupelement}\to
  H^0(\Xc,\OO_\Xc(\divisor))$ satisfying
  $\divi(\coxsec)=\divi_{\divisor}(\coxisom_{\groupelement,
    \divisor}(\coxsec))$ for all nonzero $\coxsec\in\coxring_\groupelement$,
  and $\divi(\coxsec_1\coxsec_2)=\divi(\coxsec_1)+\divi(\coxsec_2)$ for all
  nonzero homogeneous elements $\coxsec_1$, $\coxsec_2$ of $\coxring$.

  Let $\coxring$ and $\coxring'$ be two Cox rings of $\Xc$ of type
  $\typecox$. A morphism $\morphism\colon \coxring\to\coxring'$ of $\gradgroup$-graded $\field$-algebras
  is a \emph{morphism of Cox rings of $\Xc$
    of type $\typecox$} if $\divi'(\morphism(s))=\divi(s)$ for all nonzero
  homogeneous $s\in\coxring$, where $\divi$ and $\divi'$ are the maps
  associated with $\coxring$ and $\coxring'$, respectively.

  A \emph{structure of Cox ring of $\Xc$ of type $\typecox$} on a
  $\field$-algebra $\coxring$ consists of an $\gradgroup$-grading on
  $\coxring$ and a map $\divi$ as above.
\end{defin}

Note that we only require the existence of the isomorphisms
$\coxisom_{\groupelement, \divisor}$; they are not part of the data. We will
see in Remark~\ref{rem:up_to_constant} that they are determined uniquely up to
nonzero constants by the map $\divi$ if they exist.

In \cite{MR1995130,arXiv:1003.4229}, a Cox ring is the ring of global
sections of a Cox sheaf. Now we define Cox sheaves of type $\typecox$. In
Proposition~\ref{prop:cox_ring_cox_sheaf}, we prove that the Cox rings in
Definition \ref{def:cox_ring} are the rings of global sections of the
Cox sheaves defined below.

\begin{defin}\label{def:cox_sheaf}
  A \emph{Cox sheaf of $\Xc$ of type $\typecox$} is a sheaf $\coxsheaf$ of $\gradgroup$-graded $\OO_\Xc$-algebras together with a family of isomorphisms of $\OO_\Xc$-modules 
  \begin{equation*}
    \left\{\coxisom_{\groupelement, \divisor}\colon 
      \coxsheaf_{\groupelement}\to
      \OO_\Xc(\divisor)\right\}_{(\groupelement,\divisor)\in\gradgroupdiv},
  \end{equation*}
  where $\coxsheaf_{\groupelement}$ denotes the degree-$\groupelement$-part of
  $\coxsheaf$, such that, for every $(\groupelement_1,\divisor_1)$,
  $(\groupelement_2,\divisor_2)\in\gradgroupdiv$, there exists a nonzero
  constant $\compconstant\in\field$ that satisfies
  \begin{equation*}
    \coxisom_{\groupelement_1, \divisor_1}(\coxsec_1)
    \coxisom_{\groupelement_2, \divisor_2}(\coxsec_2)=
    \compconstant\coxisom_{\groupelement_1+\groupelement_2, \divisor_1+\divisor_2}(\coxsec_1\coxsec_2)
  \end{equation*}
  for all $\coxsec_i\in \coxsheaf_{\groupelement_i}(U)$, $i\in\{1,2\}$, and
  all open subsets $U$ of $\Xc$.

  A \emph{morphism of Cox sheaves of $\Xc$ of type $\typecox$} is a morphism
  of $\gradgroup$-graded $\OO_\Xc$-algebras.

  A \emph{structure of Cox sheaf of $\Xc$ of type $\typecox$} on a sheaf
  $\coxsheaf$ of $\OO_\Xc$-algebras consists of an $\gradgroup$-grading on
  $\coxsheaf$ and a family of isomorphisms
  $\{\coxisom_{\groupelement,
    \divisor}\}_{(\groupelement,\divisor)\in\gradgroupdiv}$ as above.
\end{defin} 

\begin{prop}\label{prop:equivalent_definition_cox_ring_div}
  An $\gradgroup$-graded $\field$-algebra $\coxring$ has a structure of Cox
  ring of $\Xc$ of type $\typecox$ if and only if there exists a family of
  isomorphisms of $\field$-vector spaces
  \begin{equation*}
    \{\coxisom_{\groupelement, \divisor}\colon \coxring_{\groupelement}\to
    H^0(\Xc,\OO_\Xc(\divisor))\}_{(\groupelement,\divisor)\in\gradgroupdiv},
  \end{equation*}
  such that, for every
  $(\groupelement_1,\divisor_1),(\groupelement_2,\divisor_2)\in\gradgroupdiv$,
  there exists a nonzero constant $\compconstant\in\field$ that satisfies
 \begin{equation*}
   \coxisom_{\groupelement_1, \divisor_1}(\coxsec_1)
   \coxisom_{\groupelement_2, \divisor_2}(\coxsec_2)=
   \compconstant\coxisom_{\groupelement_1+\groupelement_2, \divisor_1+\divisor_2}(\coxsec_1\coxsec_2)
 \end{equation*}
  for all $\coxsec_i\in \coxring_{\groupelement_i}$, $i\in\{1,2\}$.
  
  In particular, if $\coxsheaf$ is a Cox sheaf of $X$ of type $\lambda$, then
  $\coxsheaf(X)$ is a Cox ring of $X$ of type $\lambda$.
\end{prop}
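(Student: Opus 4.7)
The two formulations share the data of the family $\{\coxisom_{\groupelement,\divisor}\}_{(\groupelement,\divisor)\in\gradgroupdiv}$, so the equivalence amounts to translating between the axiomatic map $\divi$ of Definition \ref{def:cox_ring} and the multiplicative compatibility condition. For the forward direction, suppose $\coxring$ carries a Cox ring structure and let $(\groupelement_1,\divisor_1),(\groupelement_2,\divisor_2)\in\gradgroupdiv$ with nonzero homogeneous $\coxsec_i\in\coxring_{\groupelement_i}$. Combining the axiom $\divi(\coxsec_1\coxsec_2)=\divi(\coxsec_1)+\divi(\coxsec_2)$ with $\divi(\coxsec_i)=\divi_{\divisor_i}(\coxisom_{\groupelement_i,\divisor_i}(\coxsec_i))$ and the identity $\divi_{\divisor_1+\divisor_2}(fg)=\divi_{\divisor_1}(f)+\divi_{\divisor_2}(g)$ for sections $f,g$ of $\OO_\Xc(\divisor_1),\OO_\Xc(\divisor_2)$, I get that $\coxisom_{\groupelement_1+\groupelement_2,\divisor_1+\divisor_2}(\coxsec_1\coxsec_2)$ and $\coxisom_{\groupelement_1,\divisor_1}(\coxsec_1)\coxisom_{\groupelement_2,\divisor_2}(\coxsec_2)$ have the same associated Cartier divisor, hence differ by a global unit in $\globalX^\times=\field^\times$, i.e.\ by a scalar $\compconstant(\coxsec_1,\coxsec_2)\in\field^\times$. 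To see $\compconstant$ is independent of $\coxsec_1,\coxsec_2$, I expand the relation at $\coxsec_1+\coxsec_1'$ for linearly independent $\coxsec_1,\coxsec_1'\in\coxring_{\groupelement_1}$, using that the axioms force products of nonzero homogeneous elements to be nonzero and hence permit cancellation in $\coxring$, so $\coxisom(\coxsec_1\coxsec_2)$ and $\coxisom(\coxsec_1'\coxsec_2)$ remain linearly independent.

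For the reverse direction I define $\divi(\coxsec):=\divi_\divisor(\coxisom_{\groupelement,\divisor}(\coxsec))$ for nonzero $\coxsec\in\coxring_\groupelement$, using any choice of $(\groupelement,\divisor)\in\gradgroupdiv$. Additivity $\divi(\coxsec_1\coxsec_2)=\divi(\coxsec_1)+\divi(\coxsec_2)$ is immediate by applying $\divi_{\divisor_1+\divisor_2}$ to both sides of the given multiplicative relation (the scalar $\compconstant$ is invisible to $\divi$), and the nonvanishing of $\coxsec_1\coxsec_2$ is forced because $\coxisom_{\groupelement_1,\divisor_1}(\coxsec_1)\coxisom_{\groupelement_2,\divisor_2}(\coxsec_2)$ is nonzero in $\FfieldX$. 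The substance of the proof is the well-definedness of $\divi$: given $(\groupelement,\divisor),(\groupelement,\divisor')\in\gradgroupdiv$, I must show $\divi_\divisor(\coxisom_{\groupelement,\divisor}(\coxsec))=\divi_{\divisor'}(\coxisom_{\groupelement,\divisor'}(\coxsec))$. Since $(0,\divisor'-\divisor)\in\gradgroupdiv$, multiplicativity applied to $(\groupelement,\divisor)$ and $(0,\divisor'-\divisor)$ with second argument $1\in\coxring_0$ gives $\coxisom_{\groupelement,\divisor}(\coxsec)\coxisom_{0,\divisor'-\divisor}(1)=\compconstant\coxisom_{\groupelement,\divisor'}(\coxsec)$; passing to $\divi_{\divisor'}$, the task reduces to showing $E:=\divi_{\divisor'-\divisor}(\coxisom_{0,\divisor'-\divisor}(1))=0$.

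To show $E=0$, I would first note that multiplicativity for $(0,0)$ with itself at both arguments $1$ yields $\coxisom_{0,0}(1)^2=\compconstant'\coxisom_{0,0}(1)$, forcing $\coxisom_{0,0}(1)\in\field^\times$. Applying multiplicativity to $(0,\divisor'-\divisor)$ and $(0,\divisor-\divisor')$ at both arguments $1$ then gives $\coxisom_{0,\divisor'-\divisor}(1)\coxisom_{0,\divisor-\divisor'}(1)\in\field^\times$, so the principal divisors of these nonzero rational functions sum to zero in $\CaDiv(\Xc)$. Combined with the inequalities $\divi(\coxisom_{0,\divisor'-\divisor}(1))+(\divisor'-\divisor)\ge 0$ and $\divi(\coxisom_{0,\divisor-\divisor'}(1))+(\divisor-\divisor')\ge 0$ expressing that they are sections of the respective invertible sheaves, both inequalities are forced to be equalities, yielding $\divi(\coxisom_{0,\divisor'-\divisor}(1))=\divisor-\divisor'$ and hence $E=0$. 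The main obstacle is precisely this well-definedness step: multiplicativity must be used to pin down $\coxisom_{0,0}(1)$ as a scalar and then to force $\coxisom_{0,\divisor'-\divisor}(1)$ to be the essentially canonical trivializing section of $\OO_\Xc(\divisor'-\divisor)$, ensuring that $\divi$ does not depend on the chosen representative $\divisor$.
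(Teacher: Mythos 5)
Your proof is correct and follows the same route as the paper's: the forward direction extracts the constant $\compconstant$ from the equality of associated divisors (so the ratio lies in $\globalX^\times=\field^\times$) and uses bilinearity plus the nonvanishing of products of nonzero homogeneous elements to see that $\compconstant$ does not depend on the sections, while the reverse direction defines $\divi(\coxsec):=\divi_{\divisor}(\coxisom_{\groupelement,\divisor}(\coxsec))$ from the given family. The one place you go beyond the paper is the verification that this $\divi$ is independent of the representative $\divisor$ — reducing to $\divi_{\divisor'-\divisor}(\coxisom_{0,\divisor'-\divisor}(1))=0$, pinning down $\coxisom_{0,0}(1)\in\field^\times$, and concluding because two effective Cartier divisors summing to zero must both vanish — a check the paper leaves implicit but which is needed for the additivity axiom; your argument for it is correct.
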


\begin{proof}
  If $\coxring$ is an $\gradgroup$-graded $\field$-algebra endowed with a
  family of isomorphisms $\coxisom_{\groupelement, \divisor}$ as in the
  statement, for every $\groupelement\in\gradgroup$, choose a divisor
  $\divisor\in\CaDiv(\Xc)$ such that
  $(\groupelement,\divisor)\in\gradgroupdiv$ and define
  $\divi(\coxsec)\coloneqq \divi_{\divisor}(\coxisom_{\groupelement,
    \divisor}(\coxsec))$ for all nonzero $\coxsec\in\coxring_{\groupelement}$.

  For the converse implication, assume that $\coxring$ is a Cox ring of $\Xc$
  of type $\typecox$ with a map $\divi$. We choose isomorphisms
  $\coxisom_{\groupelement,\divisor}$ as in Definition \ref{def:cox_ring}. We
  prove that the family of isomorphisms $\coxisom_{\groupelement,\divisor}$
  defines on $\coxring$ a structure of Cox ring of $\Xc$ of type $\typecox$ as
  in the statement.  Let
  $(\groupelement_1,\divisor_1),(\groupelement_2,\divisor_2)\in\gradgroupdiv$. If
  $\coxring_{\groupelement_1}=0$ or $\coxring_{\groupelement_2}=0$, let
  $\alpha\coloneqq 1$. If
  $\coxsec_i\in\coxring_{\groupelement_i}\smallsetminus\{0\}$, $i\in\{1,2\}$,
  the elements
  \begin{equation*}
    \alpha\coloneqq 
    \coxisom_{\groupelement_1, \divisor_1}(\coxsec_1)
    \coxisom_{\groupelement_2, \divisor_2}(\coxsec_2)
    \coxisom_{\groupelement_1+\groupelement_2, \divisor_1+\divisor_2}(\coxsec_1\coxsec_2)^{-1}
  \end{equation*}
  belong to $\field^\times$ because
  \begin{equation*}
    \divi_{\divisor_1+\divisor_2}(\coxisom_{\groupelement_1+\groupelement_2, \divisor_1+\divisor_2}(\coxsec_1\coxsec_2))=\divi_{\divisor_1}(\coxisom_{\groupelement_1, \divisor_1}(\coxsec_1))+\divi_{\divisor_2}(\coxisom_{\groupelement_2, \divisor_2}(\coxsec_2)),
  \end{equation*}
  and do not depend on the chosen sections
  $\coxsec_i\in\coxring_{\groupelement_i}\smallsetminus\{0\}$ because the
  morphisms $\coxisom_{\groupelement,\divisor}$ are linear.
\end{proof}

We observe that a structure of Cox ring or of Cox sheaf is actually determined
by a subfamily of isomorphisms $\coxisom_{\groupelement,\divisor}$. This is
made precise via the following lemma, which characterizes the morphisms of
$\OO_\Xc$-modules between invertible sheaves.

\begin{lemma}\label{lem:isom_inv_sheaves}
  Let $D,D'$ be two Cartier divisors and $\morphism\colon \OO_\Xc(D)\to\OO_X(D')$ a
  morphism of $\OO_X$-modules. Then there exists an $f\in \FfieldX$
  such that $\morphism(s)=fs$ for all $s\in
  H^0(U,\OO_X(D))$ and all open subsets $U\subseteq X$. If $D=D'$, then
  $f\in \globalX$. If $\morphism$ is an isomorphism, then $D=\divi_{D'}(f)$.
\end{lemma}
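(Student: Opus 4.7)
The strategy is to reduce the statement to a local computation using that $\OO_X(D)$ and $\OO_X(D')$ are invertible subsheaves of the constant sheaf associated with $\FfieldX$, and then check that the local multipliers glue to a single rational function.

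First I would choose a common open cover $\{U_i\}_i$ over which both divisors are defined, say $D = \{(U_i, f_i)\}_i$ and $D' = \{(U_i, f'_i)\}_i$, so that $\OO_X(D)|_{U_i} = f_i^{-1}\OO_{U_i}$ and $\OO_X(D')|_{U_i} = {f'_i}^{-1}\OO_{U_i}$ inside the constant sheaf. Let $g_i := \morphism(f_i^{-1}) \in H^0(U_i, \OO_X(D')) \subseteq \FfieldX$. Since $f_i^{-1}$ generates $\OO_X(D)|_{U_i}$ as an $\OO_{U_i}$-module and $\morphism$ is $\OO_X$-linear, for any open $V \subseteq U_i$ and any $s \in H^0(V, \OO_X(D))$ we have $s f_i \in \OO(V)$ and thus $\morphism(s) = (s f_i)\, g_i = (f_i g_i)\, s$. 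So on $U_i$ the morphism $\morphism$ is multiplication by $f_i g_i \in \FfieldX$.

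Next I would verify that the elements $f_i g_i$ agree on overlaps, giving a single element $f \in \FfieldX$ with $\morphism(s) = f s$ globally. On $U_i \cap U_j$ the generators $f_i^{-1}$ and $f_j^{-1}$ of $\OO_X(D)$ differ by the unit $f_i/f_j \in \OO(U_i \cap U_j)^\times$, so $g_j = \morphism(f_j^{-1}) = (f_i/f_j)\, g_i$, hence $f_j g_j = f_i g_i$. This defines $f \in \FfieldX$ and establishes the first claim; the identity $\morphism(s) = fs$ for every section on every open then follows by the local computation above.

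For the case $D = D'$: one has $g_i \in H^0(U_i, \OO_X(D)) = f_i^{-1}\OO(U_i)$, so $f_i g_i \in \OO(U_i)$, and gluing forces $f \in \globalX$. For the isomorphism case: because $\morphism$ is an isomorphism of invertible sheaves, $\morphism(f_i^{-1}) = f f_i^{-1}$ must be a local generator of $\OO_X(D')|_{U_i} = {f'_i}^{-1}\OO_{U_i}$, whence $f f'_i / f_i \in \OO(U_i)^\times$. By definition $\divi_{D'}(f) = \{(U_i, f f'_i)\}_i$, so this unit relation is exactly the equality of Cartier divisors $\divi_{D'}(f) = D$. The only point requiring any care is the gluing step on overlaps, and this is forced by the transition relations for $\OO_X(D)$, so no serious obstacle is expected.
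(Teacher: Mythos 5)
Your proof is correct: the reduction to local trivializations $f_i^{-1}\OO_{U_i}$, the computation of the local multiplier $f_ig_i$, the gluing on overlaps via the transition units $f_i/f_j$, and the two special cases ($D=D'$ forcing $f\in\globalX$, and the isomorphism case forcing $ff'_i/f_i\in\OO(U_i)^\times$, i.e.\ $D=\divi_{D'}(f)$) are all carried out correctly. The paper states this lemma without proof, treating it as standard, and your local-generator-and-glue argument is exactly the expected verification.
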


\begin{proof}
  Without loss of generality, we can assume that $D$ and $D'$ are trivialized
  by the same open covering of $X$, say $D=\{(U_i,f_i)\}_i$ and
  $D'=\{(U_i,f'_i)\}_i$.  Let $s\in H^0(U_i,\OO_X(D))$.  Then
  $\morphism(s)=\morphism(f_i^{-1})f_is$, and
  $\morphism(f_i^{-1})=u_i{f_i'}^{-1}$ for some $u_i\in H^0(U_i,\OO_X)$.
  Moreover, $u_i{f_i'}^{-1}f_i=u_j{f_j'}^{-1}f_j$ in $\FfieldX$ for all $i$
  and $j$ by restricting $\morphism$ to $U_i\cap U_j$. Take
  $f=u_i{f_i'}^{-1}f_i$.

  If $D=D'$, then $f\in H^0(U_i,\OO_X)$ for all
  $i$, hence $f\in \globalX$. If $\morphism$ is surjective, then
  $u_i\in H^0(U_i,\OO_X)^\times$ is a unit for all $i$.
\end{proof}

\begin{remark}\label{rem:up_to_constant}
  The isomorphisms $\coxisom_{\groupelement,\divisor}$ in Definition
  \ref{def:cox_sheaf} and Proposition
  \ref{prop:equivalent_definition_cox_ring_div} are uniquely determined up to
  multiplication by nonzero elements of $\field$. Therefore, given a Cox sheaf
  $\coxsheaf$ of $\Xc$ of type $\typecox$, we say that two families
  $\{\coxisom_{\groupelement,\divisor}\}_{(\groupelement,\divisor)\in\gradgroupdiv}$
  and
  $\{\coxisom'_{\groupelement,\divisor}\}_{(\groupelement,\divisor)\in\gradgroupdiv}$
  define the same Cox sheaf structure on $\coxsheaf$ if,
  for every $(\groupelement,\divisor)\in\gradgroupdiv$, there is
  $\compconstant_{\groupelement,\divisor}\in\field^\times$ such that
  $\coxisom'_{\groupelement,\divisor}=
  \compconstant_{\groupelement,\divisor}\coxisom_{\groupelement,\divisor}$.

  By Lemma \ref{lem:isom_inv_sheaves}, a Cox sheaf structure on
  $\coxsheaf$ is determined once, for each $\groupelement\in\gradgroup$, the isomorphism
  $\coxisom_{\groupelement,\divisor}$ is known for one
  $(\groupelement,\divisor)\in\gradgroupdiv$.

  Analogous remarks apply to Cox rings.
\end{remark}

\begin{remark}\label{rem:equivalent_characterization_cox_ring}
  Using the equivalent characterization of Proposition
  \ref{prop:equivalent_definition_cox_ring_div}, a morphism of
  $\gradgroup$-graded $\field$-algebras $\morphism\colon \coxring\to\coxring'$
  between two Cox rings of $\Xc$ of type $\typecox$ is a \emph{morphism of Cox
    rings of $\Xc$ of type $\typecox$} as in Definition \ref{def:cox_ring} if
  and only if it is compatible with the families of isomorphisms
  $\{\coxisom_{\groupelement,
    \divisor}\}_{(\groupelement,\divisor)\in\gradgroupdiv}$ and
  $\{\coxisom'_{\groupelement,
    \divisor}\}_{(\groupelement,\divisor)\in\gradgroupdiv}$ associated with
  $\coxring$ and $\coxring'$, respectively, as follows: for every
  $(\groupelement,\divisor)\in\gradgroupdiv$, there is
  $\compconstant_{\groupelement,\divisor}\in\field^\times$ such that
  $\coxisom'_{\groupelement,
    \divisor}\circ\morphism\circ\coxisom_{\groupelement, \divisor}^{-1}=
  \compconstant_{\groupelement,\divisor}\identity_{H^0(\Xc,\OO_{\Xc}(\divisor))}$.
  Moreover, a \emph{structure of Cox ring of $\Xc$ of type $\typecox$} on a
  $\field$-algebra $\coxring$ is an $\gradgroup$-grading on $\coxring$ and a
  family of isomorphisms
  $\{\coxisom_{\groupelement,
    \divisor}\}_{(\groupelement,\divisor)\in\gradgroupdiv}$ as in Proposition
  \ref{prop:equivalent_definition_cox_ring_div}.
\end{remark}

Our axiomatic definition of Cox sheaves is compatible with the
previous definitions of $\Pic(\Xc)$-graded Cox sheaves as follows. The
sheaves defined in \cite[Lemma 3.5]{MR1995130}, with the family of
isomorphisms in \cite[Lemma 3.5(ii)]{MR1995130}, are Cox sheaves of
identity type in the sense of Definition \ref{def:cox_sheaf}.  If
$\Xc$ is locally factorial, the Cox sheaves constructed in
\cite[Construction 1.4.2.1]{arXiv:1003.4229}, with the family of
isomorphisms in \cite[Lemma 1.4.3.4]{arXiv:1003.4229}, are Cox sheaves
of identity type in the sense of Definition \ref{def:cox_sheaf}. 

The following construction is analogous to \cite[\S 3]{MR1995130}, \cite[\S
2]{MR2499353}, and \cite[Construction~1.4.2.1]{arXiv:1003.4229}.  In
Proposition \ref{prop:isomorphism_cox_sheaves_abstract}, we will show that
every Cox sheaf of $\Xc$ of type $\typecox$ is isomorphic to one defined in
Construction \ref{construction:cox_sheaf_abstract} and that the Cox sheaves
obtained from Construction \ref{construction:cox_sheaf_abstract} are all
isomorphic.

\begin{construction}\label{construction:cox_sheaf_abstract}
  Let 
  \begin{equation*}
    0\to\kernfreegroup\to\freegroup\stackrel{\presentation}{\to}\gradgroup\to 0
  \end{equation*}
  be a presentation of $\gradgroup$ by a finitely generated free group
  $\freegroup$. Let $\freeelement_1, \dots,\freeelement_n$ be a basis of
  $\freegroup$. For $i \in \{1, \dots, n\}$, let $\divisor_i$ be a Cartier divisor
  representing the class $\typecox(\presentation(\freeelement_i))$ in
  $\pic(\Xc)$. For every $\freeelement=\sum_{i=1}^na_i\freeelement_i$ in
  $\freegroup$, let
  $\freeinvsheaf{\freeelement}\coloneqq \OO_\Xc(\sum_{i=1}^na_i\divisor_i)$.  Endow
  $\freealgebra\coloneqq \bigoplus_{\freeelement\in\freegroup}\freeinvsheaf{\freeelement}$
  with the multiplication of sections induced by the embeddings
  $\freeinvsheaf{\freeelement}\subseteq\FfieldX$. Let
  $\character\colon \kernfreegroup\to\FfieldX^\times$ be a homomorphism of groups
  that satisfies $\divi_0(\character(\kernelement))=\sum_{i=1}^na_i\divisor_i$
  for all $\kernelement=\sum_{i=1}^na_i\freeelement_i\in\kernfreegroup$. We
  call $\character$ a \emph{character associated with} $\freealgebra$. Let
  $\freeideal$ be the sheaf of ideals of $\freealgebra$ locally generated by
  the sections $1-\character(\kernelement)$ for all
  $\kernelement\in\kernfreegroup$, where $1$ has degree 0 and
  $\character(\kernelement)$ has degree $-\kernelement$. We say that
  $\freeideal$ is the sheaf of ideals of $\freealgebra$ defined by
  $\character$.  Let $\coxsheaf\coloneqq \freealgebra/\freeideal$ and
  $\freemor\colon \freealgebra\to\coxsheaf$ the projection.
  Then
 \begin{enumerate}[label=(\arabic*), ref={\it{(\arabic{*})}}]
  \item $\freeideal$ is $\gradgroup$-homogeneous, and every
    $\freegroup$-homogeneous section of $\freeideal$ is
    zero;\label{item:construction:homogeneous_ideal}
  \item $\freemor|_{\freeinvsheaf{\freeelement}}\colon 
    \freeinvsheaf{\freeelement}\to\coxsheaf_{\presentation(\freeelement)}$ is an
    isomorphism of $\OO_\Xc$-modules for all
    $\freeelement\in\freegroup$;\label{item:construction:isomorphisms}
  \item $\coxsheaf$ with a family of isomorphisms
    $\{\coxisom_{\groupelement,
      \divisor}\}_{(\groupelement,\divisor)\in\gradgroupdiv}$ induced
    by
    $\{\freemor|_{\freeinvsheaf{\freeelement}}^{-1}\}_{\freeelement\in\freegroup}$
    is a Cox sheaf of $\Xc$ of type
    $\typecox$.\label{item:construction:cox_sheaf}
  \end{enumerate}
\end{construction}

\begin{proof}
  Since $\presentation(\kernelement)=0$ for all
  $\kernelement\in\kernfreegroup$, the sheaf $\freeideal$ is
  $\gradgroup$-homogeneous and $\coxsheaf$ is $\gradgroup$-graded. To prove
  \ref{item:construction:homogeneous_ideal}, let $U$ be an open subset of
  $\Xc$ and $\coxsec\in H^0(U,\freeideal)$ a homogeneous section of degree
  $\freeelement\in\freegroup$. Let $\mathcal{V}$ be an open covering of $U$
  such that, for all $V\in\mathcal{V}$, we can write
  $\coxsec|_V=\sum_{\kernelement\in\kernfreegroup}
  (1-\character(\kernelement))\coxsec_{\kernelement}$ with
  $\coxsec_{\kernelement}\in H^0(V,\bigoplus_{\kernelement'\in\kernfreegroup}
  \freeinvsheaf{\freeelement+\kernelement'})$ for all
  $\kernelement\in\kernfreegroup$.
  Write $\coxsec_{\kernelement}=
  \sum_{\kernelement'\in\kernfreegroup}\coxsec_{\kernelement,\kernelement'}$
  with 
  $\coxsec_{\kernelement, \kernelement'}\in
  H^0(V,\freeinvsheaf{\freeelement+\kernelement'})$. Then
  \begin{equation*}
    \coxsec|_V=\sum_{\kernelement, \kernelement'\in\kernfreegroup}
    ((1-\character(\kernelement- \kernelement'))\character(\kernelement')
    \coxsec_{\kernelement, \kernelement'}-
    (1-\character(-\kernelement'))\character(\kernelement')\coxsec_{\kernelement, \kernelement'}),
  \end{equation*}
  where $\character(\kernelement')\coxsec_{\kernelement, \kernelement'}$ is
  homogeneous of degree $\freeelement$. Hence
  $\coxsec|_V=\sum_{\kernelement\in\kernfreegroup}
  (1-\character(\kernelement))\coxsec'_{\kernelement}$ for suitable
  $\coxsec'_{\kernelement}\in H^0(V,\freeinvsheaf{\freeelement})$. Since
  $\coxsec$ is homogeneous of degree $\freeelement$, we get $\coxsec|_V=0$ for
  all $V\in\mathcal{V}$. Hence $\coxsec=0$.
  
  For \ref{item:construction:isomorphisms}, note that
  $\freemor|_{\freeinvsheaf{\freeelement}}$ is injective for all
  $\freeelement\in\freegroup$ by \ref{item:construction:homogeneous_ideal}.
  Let $U$ be an open subset of $\Xc$ and $\freeelement\in\Lambda$.  We prove
  that
  $\freemor|_{H^0(U,\freeinvsheaf{\freeelement})}\colon H^0(U,\freeinvsheaf{\freeelement})\to
  H^0(U,\coxsheaf_{\presentation(\freeelement)})$ is surjective.  Let
  $\coxsec\in H^0(U,\coxsheaf_{\presentation(\freeelement)})$ and
  $\mathcal{V}$ an open covering of $U$ such that for all $V\in\mathcal{V}$
  there exists a section
  $\coxsec_V\in
  H^0(V,\bigoplus_{\kernelement\in\kernfreegroup}\freeinvsheaf{\freeelement+\kernelement})$
  with $\freemor(\coxsec_V)=s|_V$. Write
  $\coxsec_V=\sum_{\kernelement\in\kernfreegroup}\coxsec_{V,\kernelement}$
  with $\coxsec_{V,\kernelement}$ homogeneous of degree
  $\freeelement+\kernelement$. Then
  $\coxsec'_V\coloneqq 
  \coxsec_V-\sum_{\kernelement\in\kernfreegroup}(1-\chi(\kernelement))\coxsec_{V,\kernelement}$
  is homogeneous of degree $\freeelement$ and
  $\freemor(s'_V)=\coxsec|_V$. Since
  $\coxsec'_V|_{V\cap W}-\coxsec'_W|_{V\cap W}$ is a $\freegroup$-homogeneous
  section of $\freeideal$, the sections $\coxsec'_V$ glue by
  \ref{item:construction:homogeneous_ideal} to a section
  $\coxsec'\in H^0(U,\freeinvsheaf{\freeelement})$ such that
  $\freemor(\coxsec')=\coxsec$.

  Finally, we prove \ref{item:construction:cox_sheaf}.  For every
  $(\groupelement,\divisor)\in\gradgroupdiv$, choose
  $\freeelement\in\presentation^{-1}(\groupelement)$ and an
  isomorphism of $\OO_\Xc$-modules $\psi_{\freeelement,
    \divisor}\colon \freeinvsheaf{\freeelement}\to\OO_\Xc(\divisor)$. Then
  \begin{equation*}
    \left\{\coxisom_{\groupelement, \divisor}\coloneqq 
      \psi_{\freeelement,
        \divisor}\circ\freemor|_{\freeinvsheaf{\freeelement}}^{-1}\colon 
      \coxsheaf_{\groupelement}\to
      \OO_\Xc(\divisor)\right\}_{(\groupelement,\divisor)\in\gradgroupdiv}
  \end{equation*}
  is a family of isomorphisms induced by
  $\{\freemor|_{\freeinvsheaf{\freeelement}}^{-1}\}_{\freeelement\in\freegroup}$. By
  Lemma \ref{lem:isom_inv_sheaves}, for every
  $(\groupelement_1,\divisor_1),(\groupelement_2,\divisor_2)\in\gradgroupdiv$,
  there exists a nonzero constant $\compconstant\in\field$ that
  satisfies
  \begin{equation*}
    \coxisom_{\groupelement_1,
      \divisor_1}(\coxsec_1)\coxisom_{\groupelement_2,
      \divisor_2}(\coxsec_2)=\compconstant\coxisom_{\groupelement_1+\groupelement_2,
      \divisor_1+\divisor_2}(\coxsec_1\coxsec_2)
  \end{equation*}
  for all $\coxsec_i\in H^0(U,\coxsheaf_{\groupelement_i})$,
  $i\in\{1,2\}$, and all open subsets $U$ of $\Xc$.
\end{proof}

Since $\freegroup$ is free and finitely generated, the same is true for
$\kernfreegroup$.  Therefore, characters $\character$ as in Construction
\ref{construction:cox_sheaf_abstract} always exist.

\begin{prop}\label{prop:independence_of_character_abstract}
  The Cox sheaves defined in Construction
  \ref{construction:cox_sheaf_abstract} do not depend on the choice of the
  character $\character$, up to isomorphism of Cox sheaves.
\end{prop}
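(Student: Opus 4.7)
The plan is to produce, for any two characters $\character,\character'$ arising in Construction~\ref{construction:cox_sheaf_abstract}, a graded automorphism of the ambient algebra $\freealgebra$ that carries the ideal defined by $\character$ onto the one defined by $\character'$; this automorphism descends to an isomorphism of $\gradgroup$-graded $\OO_\Xc$-algebras between the two quotient sheaves, which by Definition~\ref{def:cox_sheaf} is precisely an isomorphism of Cox sheaves of type $\typecox$.

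First I would observe that the defining condition
\begin{equation*}
\divi_0(\character(\kernelement)) = \divi_0(\character'(\kernelement)) = \sum_{i=1}^n a_i\divisor_i
\end{equation*}
for every $\kernelement = \sum_{i=1}^n a_i\freeelement_i \in \kernfreegroup$ forces the ratio $\character(\kernelement)/\character'(\kernelement)$ to have trivial divisor on $\Xc$. Combined with the standing assumption $\globalX^\times = \field^\times$, this places the ratio in $\field^\times$, giving a group homomorphism $\eta:\kernfreegroup\to\field^\times$, $\kernelement\mapsto\character(\kernelement)/\character'(\kernelement)$, that records the discrepancy between the two characters.

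The key step, and what I expect to be the main subtlety, is to extend $\eta$ to a homomorphism $\widetilde{\eta}:\freegroup\to\field^\times$. The obstruction lives in $\mathrm{Ext}^1(\gradgroup,\field^\times)$. Writing $\gradgroup \cong \ZZ^r \oplus \bigoplus_j\ZZ/n_j\ZZ$ with each $n_j$ coprime to $\mathrm{char}(\field)$ by hypothesis, this Ext group computes to $\bigoplus_j\field^\times/(\field^\times)^{n_j}$, and it vanishes because $\field$ is separably closed: the polynomials $T^{n_j}-a$ are separable and split in $\field$, so the $n_j$-th power map is surjective on $\field^\times$. Everything else in the argument is essentially formal.

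With such an extension $\widetilde{\eta}$ in hand, I would define the $\OO_\Xc$-algebra automorphism $\automorphism:\freealgebra\to\freealgebra$ whose restriction to each summand $\freeinvsheaf{\freeelement}$ is multiplication by $\widetilde{\eta}(\freeelement)\in\field^\times$; multiplicativity of $\widetilde{\eta}$ guarantees that this respects products, and $\automorphism$ is manifestly $\freegroup$-graded (and therefore $\gradgroup$-graded via $\presentation$). Since $\character(\kernelement)$ sits in degree $-\kernelement$, one computes $\automorphism(\character(\kernelement)) = \widetilde{\eta}(-\kernelement)\character(\kernelement) = \eta(\kernelement)^{-1}\character(\kernelement) = \character'(\kernelement)$, whence $\automorphism(1-\character(\kernelement)) = 1-\character'(\kernelement)$. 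Consequently $\automorphism$ carries the local generators of the ideal $\freeideal$ defined by $\character$ onto those of the ideal defined by $\character'$ and therefore identifies the two ideals; passing to the quotients yields the desired isomorphism of Cox sheaves.
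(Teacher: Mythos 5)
Your proposal is correct and follows the same route as the paper's proof: both extend the discrepancy character $\character'\character^{-1}$ (equivalently its inverse) from $\kernfreegroup$ to all of $\freegroup$ using that $\field$ is separably closed and the torsion of $\gradgroup$ is coprime to the characteristic, and then use the extension to build a graded automorphism of $\freealgebra$ carrying one ideal sheaf onto the other. The only difference is that you spell out two points the paper leaves implicit — that $\divi_0(\character(\kernelement))=\divi_0(\character'(\kernelement))$ together with $\globalX^\times=\field^\times$ forces the ratio into $\field^\times$, and that the extension obstruction sits in $\operatorname{Ext}^1(\gradgroup,\field^\times)\cong\bigoplus_j\field^\times/(\field^\times)^{n_j}=0$.
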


\begin{proof}
  Let $\freegroup$, $\kernfreegroup$, and $\freealgebra$ be as in
  Construction \ref{construction:cox_sheaf_abstract}. Let $\character$
  and $\character'$ be two characters associated with $\freealgebra$,
  and $\freeideal$ and $\freeideal'$ the sheaves of ideals of
  $\freealgebra$ defined by $\character$ and $\character'$,
  respectively.  Since $\freegroup$ is free and finitely generated,
  $\field$ is separably closed and the order of the torsion subgroup
  of $\gradgroup$ is not divisible by the characteristic of $\field$,
  the character
  $\character'\character^{-1}\colon \kernfreegroup\to\field^\times$ extends
  to a character $\alpha\colon \freegroup\to\field^\times=\globalX^\times$,
  which defines an automorphism $\morphism\colon \freealgebra\to\freealgebra$
  by sending each homogeneous element $\coxsec$ of degree
  $\freeelement$ to $\alpha(-\freeelement)\coxsec$.  Since
  $\morphism(1-\character(\kernelement))=
  1-\alpha(\kernelement)\character(\kernelement)=1-\character'(\kernelement)$
  for all $\kernelement\in\kernfreegroup$, the automorphism
  $\morphism$ maps $\freeideal$ onto $\freeideal'$. Hence it induces an
  isomorphism $\freealgebra/\freeideal\to\freealgebra/\freeideal'$ of
  $\gradgroup$-graded $\OO_\Xc$-algebras.
\end{proof}
  
\begin{prop}\label{prop:isomorphism_cox_sheaves_abstract}
  Let $\coxsheaf$ be a Cox sheaf of $\Xc$ of type $\typecox$. For every
  $\freealgebra$ as in Construction \ref{construction:cox_sheaf_abstract},
  there exists a character $\character$ associated with $\freealgebra$ and an
  isomorphism $\coxsheaf\cong\freealgebra/\freeideal$ of Cox sheaves of type
  $\typecox$, where $\freeideal$ is the sheaf of ideals of $\freealgebra$
  defined by $\character$.
\end{prop}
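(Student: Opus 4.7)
The plan is to construct, directly from the Cox-sheaf structure on $\coxsheaf$, a character $\character$ associated with $\freealgebra$ and a surjective morphism of $\gradgroup$-graded $\OO_\Xc$-algebras $\morphism\colon \freealgebra \to \coxsheaf$ whose kernel contains the sheaf of ideals $\freeideal$ defined by $\character$. The induced morphism $\freealgebra/\freeideal \to \coxsheaf$ of $\gradgroup$-graded $\OO_\Xc$-algebras will then be an isomorphism, which by Definition \ref{def:cox_sheaf} is an isomorphism of Cox sheaves of type $\typecox$.

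First I would build $\morphism$ at the generic point $\eta$ of $\Xc$. There $\freealgebra_\eta$ is canonically the group algebra $\FfieldX[\freegroup]$ (each $\freeinvsheaf{\freeelement,\eta} = \FfieldX$), while every $\coxsheaf_{\groupelement,\eta}$ is a one-dimensional $\FfieldX$-vector space; the scalar $\compconstant \in \field^\times$ in Definition \ref{def:cox_sheaf} forces products of nonzero homogeneous elements of $\coxsheaf_\eta$ to be nonzero, so every nonzero homogeneous element of $\coxsheaf_\eta$ is invertible. Choose $\tilde e_i := \coxisom_{\presentation(\freeelement_i),\divisor_i}^{-1}(1) \in \coxsheaf_{\presentation(\freeelement_i),\eta}$, and for $\freeelement = \sum a_i\freeelement_i \in \freegroup$ set $\tilde e^{\freeelement} := \prod_i \tilde e_i^{a_i}$. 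By commutativity and associativity of multiplication in $\coxsheaf_\eta$, together with the freeness of $\freegroup$, one gets $\tilde e^{\freeelement_1}\tilde e^{\freeelement_2} = \tilde e^{\freeelement_1+\freeelement_2}$. Sending the canonical degree-$\freeelement$ basis element of $\FfieldX[\freegroup]$ to $\tilde e^{\freeelement}$ therefore extends $\FfieldX$-linearly to a $\gradgroup$-graded (via $\presentation$) algebra homomorphism $\morphism_\eta\colon \freealgebra_\eta \to \coxsheaf_\eta$.

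Next I would descend $\morphism_\eta$ to a morphism of sheaves and extract the character. In each degree $\freeelement$, Lemma \ref{lem:isom_inv_sheaves} applied to the composition $\coxisom_{\presentation(\freeelement),\sum a_i\divisor_i} \circ \morphism_\eta|_{\freeinvsheaf{\freeelement,\eta}}$ shows that $\morphism_\eta$ restricts to a sheaf isomorphism $\morphism_\freeelement\colon \freeinvsheaf{\freeelement} \to \coxsheaf_{\presentation(\freeelement)}$ that agrees with $\coxisom_{\presentation(\freeelement),\sum a_i\divisor_i}^{-1}$ up to a constant in $\field^\times$. These assemble into a surjective morphism of $\gradgroup$-graded $\OO_\Xc$-algebras $\morphism\colon \freealgebra \to \coxsheaf$. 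I then define $\character\colon \kernfreegroup \to \FfieldX^\times$ by $\character(\kernelement) := \tilde e^{\kernelement} \in \coxsheaf_{0,\eta} = \FfieldX$: multiplicativity of $\morphism_\eta$ gives $\character(\kernelement_1+\kernelement_2) = \character(\kernelement_1)\character(\kernelement_2)$, and Lemma \ref{lem:isom_inv_sheaves} applied to $\morphism_\kernelement$ gives $\divi_0(\character(\kernelement)) = \sum a_i\divisor_i$, so $\character$ is a character associated with $\freealgebra$ in the sense of Construction \ref{construction:cox_sheaf_abstract}.

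To conclude, the identity $\tilde e^{-\kernelement} = \character(\kernelement)^{-1}$ in $\coxsheaf_\eta$ yields $\morphism(1 - \character(\kernelement)) = 0$ for every $\kernelement \in \kernfreegroup$, so $\freeideal \subseteq \ker\morphism$ and $\morphism$ factors through a surjective $\gradgroup$-graded $\OO_\Xc$-algebra morphism $\tilde\morphism\colon \freealgebra/\freeideal \to \coxsheaf$. By Construction \ref{construction:cox_sheaf_abstract}\ref{item:construction:isomorphisms}, each $(\freealgebra/\freeideal)_\groupelement$ is invertible, so in each degree $\tilde\morphism_\groupelement$ is a surjection between invertible sheaves of the same rank and hence an isomorphism; thus $\tilde\morphism$ is an isomorphism of $\gradgroup$-graded $\OO_\Xc$-algebras. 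The main obstacle is the first step: producing $\morphism$ multiplicatively, rather than only up to scalars in each degree, requires trivializing the 2-cocycle that governs multiplication in $\coxsheaf_\eta$, and this works precisely because $\freegroup$ is free abelian and the cocycle is symmetric with values in $\field^\times$; the rest of the argument is degree-wise bookkeeping via Lemma \ref{lem:isom_inv_sheaves}.
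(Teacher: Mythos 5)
Your proposal is correct and follows essentially the same route as the paper's proof: both exploit the freeness of $\freegroup$ to choose a strictly multiplicative system of identifications $\freeinvsheaf{\freeelement}\cong\coxsheaf_{\presentation(\freeelement)}$ (the paper via explicitly normalized constants $\compconstant_{\freeelement}$ attached to decompositions $\freeelement=\freeelement^+-\freeelement^-$, you via monomials $\tilde e^{\freeelement}$ in invertible elements of the generic stalk), then read off the character from the degree-$\kernfreegroup$ part and identify the kernel using Lemma \ref{lem:isom_inv_sheaves}. The only cosmetic difference is the direction: you build $\freealgebra\to\coxsheaf$ directly, whereas the paper constructs an isomorphism $\bigoplus_{\freeelement}\coxsheaf_{\presentation(\freeelement)}\to\freealgebra$ and transports the kernel of the projection onto $\coxsheaf$.
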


\begin{proof}
  Let $\kernfreegroup$, $\freegroup$, $\presentation$ and $\freealgebra$ be as
  in Construction \ref{construction:cox_sheaf_abstract}.  Let
  $\freealgebra'\coloneqq
  \bigoplus_{\freeelement\in\freegroup}\coxsheaf_{\presentation(\freeelement)}$.
  Let
  $\coxisom_{\groupelement,\divisor}\colon
  \coxsheaf_\groupelement\to\OO_\Xc(\divisor)$, for
  $(\groupelement,\divisor)\in\gradgroupdiv$, be a family of isomorphisms
  associated with $\coxsheaf$.

  Let $\basis$ be the basis of $\freegroup$ chosen in Construction
  \ref{construction:cox_sheaf_abstract}, and let $\freegroup_+$ be the monoid
  generated by $\basis$. For every $L\in\basis$, denote by
  $\divisor_\freeelement$ the Cartier divisor representing the class
  $\typecox(\presentation(\freeelement))$ in $\pic(\Xc)$ such that
  $\freeinvsheaf{\freeelement}=\OO_\Xc(\divisor_\freeelement)$. For every
  $L\in\freegroup_+$, write $\freeelement=\sum_{i=1}^r\freeelement_i$ with not
  necessarily distinct $\freeelement_i\in\basis$, define
  $\divisor_\freeelement\coloneqq \sum_{i=1}^r\divisor_{\freeelement_i}$, and let
  $\compconstant_{\freeelement}$ be the unique element of $\field^\times$ that
  satisfies
  \begin{equation*}
    \prod_{i=1}^r\coxisom_{\presentation(\freeelement_i),\divisor_{\freeelement_i}}(\coxsec_i)
    =\compconstant_{\freeelement}
    \coxisom_{\presentation(\freeelement),\divisor_{\freeelement}}(\coxsec_1\cdots\coxsec_r)
  \end{equation*}
  for all $\coxsec_i\in H^0(U,\coxsheaf_{\presentation(\freeelement_i)})$ and
  all open $U\subseteq\Xc$. For every $\freeelement\in\freegroup$, write
  $\freeelement=\freeelement^+-\freeelement^-$ with
  $\freeelement^+,\freeelement^-\in\freegroup_+$, and define
  $\compconstant_\freeelement\coloneqq \compconstant_{\freeelement^+}
  \compconstant_{\freeelement^-}^{-1}\compconstant^{-1}$, where
  $\compconstant\in\field^\times$ is the unique constant that satisfies
  $\coxisom_{\presentation(\freeelement),\divisor_{\freeelement}}(\coxsec)
  \coxisom_{\presentation(\freeelement^-),\divisor_{\freeelement^-}}(\coxsec')
  =\compconstant\coxisom_{\presentation(\freeelement^+),\divisor_{\freeelement^+}}(\coxsec\coxsec')$
  for all $\coxsec\in H^0(U,\coxsheaf_{\presentation(\freeelement)})$,
  $\coxsec'\in H^0(U,\coxsheaf_{\presentation(\freeelement^-)})$ and all open
  $U\subseteq\Xc$. The constant $\compconstant_{\freeelement}$ does not depend
  on the choice of $\freeelement^+$ and $\freeelement^-$. The morphisms
  $\morphism_\freeelement\coloneqq \compconstant_{\freeelement}
  \coxisom_{\presentation(\freeelement),\divisor_{\freeelement}}$
  induce an isomorphism of $\freegroup$-graded $\OO_\Xc$-algebras
  $\morphism\colon \freealgebra'\to\freealgebra$.

  The map $\character\colon \kernfreegroup\to\FfieldX^\times$ defined by
  $\chi(\kernelement)\coloneqq \morphism_{-\kernelement}(1)$ for all
  $\kernelement\in\kernfreegroup$, is a character associated with
  $\freealgebra$ by Lemma \ref{lem:isom_inv_sheaves}. Let $\freeideal'$ be the
  kernel of the projection $\freealgebra'\to\coxsheaf$. Then
  $\morphism(\freeideal')$ is the sheaf $\freeideal$ of ideals of
  $\freealgebra$ defined by $\character$.  Thus $\morphism$ induces an
  isomorphism $\coxsheaf\cong\freealgebra/\freeideal$.
\end{proof}
  
\begin{cor}\label{cor:existence_isom_cox_sheaves}
  There exists exactly one isomorphism class of Cox sheaves of $\Xc$ of type $\typecox$.
\end{cor}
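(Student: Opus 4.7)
The plan is to deduce this as a direct combination of the two preceding propositions, with a brief check that the constructions in question are non-vacuous. More precisely, I would proceed in three short steps.

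First, I would verify that Construction~\ref{construction:cox_sheaf_abstract} actually produces at least one Cox sheaf of type $\typecox$. Given the presentation $0 \to \kernfreegroup \to \freegroup \stackrel{\presentation}{\to} \gradgroup \to 0$ and the chosen divisors $\divisor_1,\dots,\divisor_n$, the point is that for every $\kernelement = \sum a_i \freeelement_i \in \kernfreegroup$ the class $[\sum a_i \divisor_i] = \typecox(\presentation(\kernelement)) = 0$ in $\pic(\Xc)$, so $\sum a_i \divisor_i$ is principal. Since $\kernfreegroup$ is a subgroup of the finitely generated free group $\freegroup$, it is itself free of finite rank, and one may therefore define $\character:\kernfreegroup\to\FfieldX^\times$ by specifying, on a $\ZZ$-basis of $\kernfreegroup$, a choice of rational function with the required divisor. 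This yields a character $\character$ associated with $\freealgebra$, and hence a Cox sheaf $\coxsheaf := \freealgebra/\freeideal$ of type $\typecox$ by part~\ref{item:construction:cox_sheaf} of the construction.

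Second, for uniqueness I would take two Cox sheaves $\coxsheaf$ and $\coxsheaf'$ of $\Xc$ of type $\typecox$. Fix one $\freealgebra$ as in Construction~\ref{construction:cox_sheaf_abstract}. By Proposition~\ref{prop:isomorphism_cox_sheaves_abstract} applied to $\coxsheaf$ and to $\coxsheaf'$ separately, there exist characters $\character$ and $\character'$ associated with $\freealgebra$ and isomorphisms of Cox sheaves
\begin{equation*}
\coxsheaf \cong \freealgebra/\freeideal, \qquad \coxsheaf' \cong \freealgebra/\freeideal',
\end{equation*}
where $\freeideal,\freeideal'$ are the sheaves of ideals defined by $\character,\character'$. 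By Proposition~\ref{prop:independence_of_character_abstract}, these two quotient Cox sheaves are themselves isomorphic, and composing the three isomorphisms gives $\coxsheaf \cong \coxsheaf'$.

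Since there is no real obstacle beyond invoking the two propositions, the only point worth checking carefully is the consistency of the isomorphism class with the choice of presentation $\freegroup \to \gradgroup$: different presentations give rise to different $\freealgebra$, but the argument above only requires us to fix one presentation and use it as a bridge between $\coxsheaf$ and $\coxsheaf'$, so no additional independence statement is needed. The corollary therefore follows formally, and I would present it in a few lines.
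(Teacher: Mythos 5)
Your proposal is correct and follows exactly the route the paper intends: existence via the remark that characters associated with $\freealgebra$ always exist (since $\kernfreegroup$ is free of finite rank), and uniqueness by combining Proposition~\ref{prop:isomorphism_cox_sheaves_abstract} (every Cox sheaf is isomorphic to $\freealgebra/\freeideal$ for some character) with Proposition~\ref{prop:independence_of_character_abstract} (independence of the character). The paper leaves the corollary without a written proof precisely because it is this formal combination, so nothing is missing.
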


\begin{prop}\label{prop:cox_ring_cox_sheaf}
  Every Cox ring of $\Xc$ of type $\typecox$ is isomorphic, as a Cox ring of
  type $\typecox$, to the ring of global sections of a Cox sheaf of $\Xc$ of
  type $\typecox$.
\end{prop}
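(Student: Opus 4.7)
The proof splits into two parts: first, show that the ring of global sections of any Cox sheaf of type $\typecox$ is naturally a Cox ring of type $\typecox$; second, given a Cox ring $R$ of type $\typecox$, produce a Cox sheaf $\coxsheaf$ of type $\typecox$ together with an isomorphism $R\cong\coxsheaf(\Xc)$ of Cox rings. The first part is straightforward: given a Cox sheaf $\coxsheaf$ with structure isomorphisms $\coxisom_{\groupelement,\divisor}\colon\coxsheaf_\groupelement\to\OO_\Xc(\divisor)$, passing to global sections gives $\field$-vector space isomorphisms $\coxsheaf_\groupelement(\Xc)\to H^0(\Xc,\OO_\Xc(\divisor))$, still satisfying the multiplicativity-up-to-a-global-scalar condition from Definition \ref{def:cox_sheaf}; Proposition \ref{prop:equivalent_definition_cox_ring_div} then endows $\coxsheaf(\Xc)$ with a structure of Cox ring of type $\typecox$.

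For the second part, given a Cox ring $R$ with structure $\{\coxisom_{\groupelement,\divisor}\}$, I mimic the proof of Proposition \ref{prop:isomorphism_cox_sheaves_abstract} in the ring setting. Choose a presentation $\kernfreegroup\hookrightarrow\freegroup\xrightarrow{\presentation}\gradgroup$, a basis $\basis=\{\freeelement_1,\ldots,\freeelement_n\}$ of $\freegroup$, divisors $\divisor_i$ representing $\typecox(\presentation(\freeelement_i))$, and form $\freealgebra=\bigoplus_{\freeelement\in\freegroup}\freeinvsheaf{\freeelement}$ as in Construction \ref{construction:cox_sheaf_abstract}. For $\freeelement=\sum_i\freeelement_{j_i}$ in the monoid generated by $\basis$, set $\divisor_\freeelement:=\sum_i\divisor_{j_i}$ and use the ring multiplication to define $\compconstant_\freeelement\in\field^\times$ via
\begin{equation*}
\prod_i\coxisom_{\presentation(\freeelement_{j_i}),\divisor_{j_i}}(\coxsec_i)=\compconstant_\freeelement\coxisom_{\presentation(\freeelement),\divisor_\freeelement}(\coxsec_1\cdots\coxsec_r),
\end{equation*}
setting $\compconstant_\freeelement:=1$ when some $R_{\presentation(\freeelement_{j_i})}$ vanishes (as in the proof of Proposition \ref{prop:equivalent_definition_cox_ring_div}). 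Extend $\compconstant_\freeelement$ to $\freegroup$ by the same multiplicative formula as in Proposition \ref{prop:isomorphism_cox_sheaves_abstract}, let $\morphism_\freeelement:=\compconstant_\freeelement\coxisom_{\presentation(\freeelement),\divisor_\freeelement}$, and define $\character\colon\kernfreegroup\to\FfieldX^\times$ by $\character(\kernelement):=\morphism_{-\kernelement}(1_R)$; Lemma \ref{lem:isom_inv_sheaves} shows this is a character associated with $\freealgebra$ in the sense of Construction \ref{construction:cox_sheaf_abstract}.

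Construction \ref{construction:cox_sheaf_abstract} with this $\character$ produces a Cox sheaf $\coxsheaf=\freealgebra/\freeideal$ of type $\typecox$. The maps $\morphism_\freeelement$, composed with the projection $\freemor^\Xc\colon\freealgebra(\Xc)\to\coxsheaf(\Xc)$, assemble in each $\gradgroup$-degree $\groupelement$ into a map $R_\groupelement\to\coxsheaf_\groupelement(\Xc)$ that is an isomorphism by Construction \ref{construction:cox_sheaf_abstract}\ref{item:construction:isomorphisms}; these assemble to a morphism $\morphism\colon R\to\coxsheaf(\Xc)$ of $\gradgroup$-graded $\field$-algebras, and Remark \ref{rem:equivalent_characterization_cox_ring} then confirms compatibility with the Cox ring structures. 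The main obstacle is checking both that the map $R_\groupelement\to\coxsheaf_\groupelement(\Xc)$ does not depend on the choice of $\freeelement\in\presentation^{-1}(\groupelement)$ and that $\morphism$ is a ring homomorphism. Both verifications come down to the identity $\morphism_{\freeelement+\kernelement}(\coxsec)\equiv\character(\kernelement)\cdot\morphism_\freeelement(\coxsec)\pmod{\freeideal}$ for $\kernelement\in\kernfreegroup$ and $\coxsec\in R_{\presentation(\freeelement)}$, which follows from the definition of $\character$ combined with the multiplicative consistency of the scalars $\compconstant_\freeelement$—the ring-theoretic counterpart of the sheaf-level computation in Proposition \ref{prop:isomorphism_cox_sheaves_abstract}.
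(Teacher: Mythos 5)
Your overall strategy --- extract a character $\character$ from the multiplicative structure of $\coxring$, feed it into Construction~\ref{construction:cox_sheaf_abstract}, and compare $\coxring$ with the global sections of the resulting sheaf --- is the same as the paper's, and the first half (global sections of a Cox sheaf form a Cox ring of the same type) is fine. The gap is in how you pin down the scalars $\compconstant_\freeelement$. You work with the monoid generated by a basis of the \emph{full} group $\freegroup$ and set $\compconstant_\freeelement:=1$ whenever some $\coxring_{\presentation(\freeelement_{j_i})}$ vanishes. In Proposition~\ref{prop:equivalent_definition_cox_ring_div} that convention is harmless because the constant only enters a condition that is vacuous in that case; here it is not, because $\coxring_{\presentation(\freeelement)}$ can be nonzero even though every summand $\coxring_{\presentation(\freeelement_{j_i})}$ in your chosen decomposition of $\freeelement$ vanishes (a basis element of $\freegroup$ need not have effective image under $\typecox\circ\presentation$ even when $\freeelement$ does). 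Your last paragraph relies on ``the multiplicative consistency of the scalars $\compconstant_\freeelement$'', i.e.\ on $\compconstant_{\freeelement+\freeelement'}=\compconstant_{\freeelement}\compconstant_{\freeelement'}\compconstant$ with $\compconstant$ the structure constant of $\coxring$ from Proposition~\ref{prop:equivalent_definition_cox_ring_div}; once some of the $\compconstant_\freeelement$ are assigned the value $1$ by fiat this identity fails for a general choice of the family $\{\coxisom_{\groupelement,\divisor}\}$, so $\character$ need not be a group homomorphism (hence need not be a character associated with $\freealgebra$) and $\morphism$ need not be multiplicative. Already two basis elements $\freeelement_1,\freeelement_2$ with $\typecox(\presentation(\freeelement_1))$ and $\typecox(\presentation(\freeelement_2))$ non-effective but $\typecox(\presentation(\freeelement_1+\freeelement_2))$ effective force $\compconstant_{\freeelement_1+\freeelement_2}=\compconstant_{2\freeelement_1+2\freeelement_2}=1$, which is incompatible with multiplicativity of $\morphism$ on the nonzero piece $\coxring_{\presentation(\freeelement_1+\freeelement_2)}$ unless the corresponding structure constant of $\coxring$ happens to equal $1$.

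The paper's proof avoids this precisely by not using a basis of $\freegroup$: it passes to the subgroup $\freegroupeff\subseteq\freegroup$ generated by the elements $\freeelement$ with $\typecox(\presentation(\freeelement))$ effective --- which still contains $\kernfreegroup$, so the character remains defined on all of $\kernfreegroup$ --- and defines the constants $\compconstant_\freeelement$ only for elements with effective image, where they are determined by honest nonzero sections of $\coxring$ and therefore automatically satisfy the required multiplicative identities. (For degrees with non-effective image both $\coxring_{\presentation(\freeelement)}$ and $H^0(\Xc,\OO_\Xc(\divisor_\freeelement))$ vanish, so nothing needs to be compared there.) To repair your argument you should make the same restriction to $\freegroupeff$, or otherwise justify that a basis adapted to the effective degrees can be chosen, rather than assigning arbitrary values to constants that the data do not determine.
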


\begin{proof}
  Let $\coxring$ be a Cox ring of $\Xc$ of type $\typecox$.  Let
  $\coxisom_{\groupelement,\divisor}\colon \coxring_\groupelement\to
  H^0(\Xc,\OO_\Xc(\divisor))$, for $(\groupelement,\divisor)\in\gradgroupdiv$,
  be a family of isomorphisms associated with $\coxring$ as in Proposition
  \ref{prop:equivalent_definition_cox_ring_div}.

  Let $\kernfreegroup$, $\freegroup$, $\presentation$ and $\freealgebra$ be as
  in Construction \ref{construction:cox_sheaf_abstract}.  For every
  $\freeelement\in\freegroup$, let $\divisor_\freeelement$ be the Cartier
  divisor on $\Xc$ such that
  $\freealgebra_{\freeelement}=\OO_{\Xc}(\divisor_{\freeelement})$.  Let
  $\freegroupeff$ be the subgroup of $\freegroup$ generated by the elements
  $\freeelement$ such that $\typecox(\presentation(\freeelement))$ is
  effective in $\pic(\Xc)$. We observe that
  $\kernfreegroup\subseteq\freegroupeff$.  Since $\freegroup$ is free and
  finitely generated, the same is true for $\freegroupeff$. Let $\basiseff$ be
  a basis of $\freegroupeff$, and let $\freegroup_+\subseteq\freegroupeff$ be
  the monoid generated by $\basiseff$.  For every $L\in\freegroup_+$, write
  $\freeelement=\sum_{i=1}^r\freeelement_i$ with not necessarily distinct
  $\freeelement_i\in\basiseff$, and let $\compconstant_{\freeelement}$ be the
  unique element of $\field^\times$ that satisfies
  \begin{equation*}
    \prod_{i=1}^r\coxisom_{\presentation(\freeelement_i),\divisor_{\freeelement_i}}(\coxsec_i)
    =\compconstant_{\freeelement}
    \coxisom_{\presentation(\freeelement),\divisor_{\freeelement}}(\coxsec_1\cdots\coxsec_r)
  \end{equation*}
  for all $\coxsec_i\in \coxring_{\presentation(\freeelement_i)}$ with
  $i\in\{1,\dots,r\}$. For every $\freeelement\in\freegroupeff$ such that
  $\typecox(\presentation(\freeelement))$ is effective in $\pic(\Xc)$, write
  $\freeelement=\freeelement^+-\freeelement^-$ with
  $\freeelement^+,\freeelement^-\in\freegroup_+$, and define
  $\compconstant_\freeelement\coloneqq \compconstant_{\freeelement^+}
  \compconstant_{\freeelement^-}^{-1}\compconstant^{-1}$,
  where $\compconstant\in\field^\times$ is the unique constant that satisfies
  \begin{equation*}
    \coxisom_{\presentation(\freeelement),\divisor_{\freeelement}}(\coxsec)
    \coxisom_{\presentation(\freeelement^-),\divisor_{\freeelement^-}}(\coxsec')
    =\compconstant
    \coxisom_{\presentation(\freeelement^+),\divisor_{\freeelement^+}}(\coxsec\coxsec')
  \end{equation*}
  for all $\coxsec\in \coxring_{\presentation(\freeelement)}$ and
  $\coxsec'\in\coxring_{\presentation(\freeelement^-)}$. The constant
  $\compconstant_{\freeelement}$ does not depend on the choice of
  $\freeelement^+$ and $\freeelement^-$.  The isomorphisms
  $\morphism_\freeelement
  \coloneqq \compconstant_{\freeelement}^{-1}
  \coxisom_{\presentation(\freeelement),\divisor_{\freeelement}}^{-1}$
  induce a surjective morphism of graded $\field$-algebras
  $\morphism\colon H^0(\Xc,\freealgebra)\to\coxring$.

  The map $\character\colon \kernfreegroup\to\FfieldX^\times$ defined by
  $\chi(\kernelement)\coloneqq \morphism_{-\kernelement}^{-1}(1)$ for all
  $\kernelement\in\kernfreegroup$ is a character associated with
  $\freealgebra$. Let $\freeideal$ be the sheaf of ideals of $\freealgebra$
  associated with $\character$. Then $\coxsheaf\coloneqq \freealgebra/\freeideal$ is a
  Cox sheaf of $\Xc$ of type $\typecox$,
  $H^0(\Xc,\freealgebra)/H^0(\Xc,\freeideal)\cong H^0(\Xc,\coxsheaf)$ by
  Construction
  \ref{construction:cox_sheaf_abstract}\ref{item:construction:isomorphisms},
  and $H^0(\Xc,\freeideal)$ is the kernel of $\morphism$.

  We show that the induced isomorphism
  $\morphism\colon H^0(\Xc,\freealgebra)/H^0(\Xc,\freeideal)\to\coxring$ is a
  morphism of Cox rings.  The Cox sheaf $\coxsheaf$ is endowed with the family
  of isomorphisms
  $\{\coxisom'_{\groupelement,
    \divisor}\}_{(\groupelement,\divisor)\in\gradgroupdiv}$ induced by
  $\{\freemor|_{\freeinvsheaf{\freeelement}}^{-1}\}_{\freeelement\in\freegroup}$,
  where $\freemor\colon \freealgebra\to\coxsheaf$ is the projection. For every
  $\groupelement\in\gradgroup$ such that $\typecox(\groupelement)$ is
  effective, let $\freeelement\in\freegroupeff$ be such that
  $\presentation(\freeelement)=\groupelement$. Without loss of generality, we
  can assume that
  $\coxisom_{\groupelement,
    \divisor_\freeelement}'=\freemor|_{\freeinvsheaf{\freeelement}}^{-1}$. Hence
  \begin{equation*}
    \coxisom_{\groupelement,
      \divisor_\freeelement}\circ\morphism\circ(\coxisom_{\groupelement,
      \divisor_\freeelement}')^{-1}=\compconstant_{\freeelement}^{-1}
    \identity_{H^0(\Xc,\OO_{\Xc}(\divisor_\freeelement))}.\qedhere
  \end{equation*}
\end{proof}

By definition, the degree-$\groupelement$-part of a Cox ring of $\Xc$
of type $\typecox$ is nonzero if and only if $\typecox(\groupelement)$
is an effective class in $\pic(\Xc)$.  But the
degree-$\groupelement$-part of a Cox sheaf of $\Xc$ of type $\typecox$
is always nonzero. Hence we may expect that a Cox sheaf is not
completely determined by its ring of global sections.  The following
example shows that Cox sheaves of different types, which are not
isomorphic, may have rings of global sections that are isomorphic as
$\field$-algebras.

\begin{example}
  A Cox ring of $\PP^1\times\PP^1$ of identity type is the $\ZZ^2$-graded ring
  \begin{equation*}
    k[x_0,x_1,y_0,y_1]=
    \bigoplus_{(a,b)\in\ZZ_{\geq0}\times\ZZ_{\geq0}}k[x_0,x_1,y_0,y_1]_{(a,b)} ,
  \end{equation*}
  where $x_0,x_1$ have degree $(1,0)$ and $y_0,y_1$ have degree $(0,1)$, via
  the standard identification $\pic(\PP^1\times\PP^1)\cong\ZZ^2$.
  
  Let $\lambda\colon \ZZ\to\ZZ^2$ be the morphism that sends $a$ to $(a,-a)$.  A Cox
  ring of $\PP^1\times\PP^1$ of type $\lambda$ is, by pullback (see Definition
  \ref{def:pullback}),
  \begin{equation*}
    \bigoplus_{a\in\ZZ}k[x_0,x_1,y_0,y_1]_{(a,-a)}=k,
  \end{equation*}
  which is the same, as a $\field$-algebra, as a Cox ring of $\PP^1\times\PP^1$
  of type $\{0\}\to\ZZ^2$.  A Cox sheaf of type $\typecox$ is isomorphic to
  $\bigoplus_{a\in\ZZ}\mathscr{L}^{\otimes a}$, where $\mathscr{L}$ is an
  invertible sheaf of type $(1,-1)$, while a Cox sheaf of type $\{0\}\to\ZZ^2$
  is $\OO_{\PP^1\times\PP^1}$.
\end{example}

\begin{remark}\label{rem:effective_degrees}
  The example above can be explained as follows.  Let $\gradgroupeff$ denote the
  subgroup of $\gradgroup$ generated by the elements $\groupelement$ such that
  $\typecox(\groupelement)$ is effective in $\pic(\Xc)$, and define
  $\typecoxeff\coloneqq \typecox|_{\gradgroupeff}\colon \gradgroupeff\to\pic(\Xc)$.  A Cox
  ring of $\Xc$ of type $\typecox$ also has a structure of Cox ring of $\Xc$
  of type $\typecox'$, for all $\typecox'\colon \gradgroup'\to\pic(\Xc)$ such that
  $\gradgroupeff'=\gradgroupeff$ and $\typecoxeff'=\typecoxeff$.  The
  situation in the example above cannot occur if $\typecox(\gradgroup)$ is
  generated by effective divisor classes. This is the case, for example, if
  $\typecox(\gradgroup)$ contains an ample divisor class.
\end{remark}

We recall that the morphisms of Cox sheaves of type $\typecox$ are by
definition just morphisms of graded $\OO_\Xc$-algebras, while the
morphisms of Cox rings are morphisms of graded $\field$-algebras that
are compatible with the map $\divi$.  The following example shows that
a $\field$-algebra morphism respecting the grading is not necessarily
a morphism of Cox rings of type $\typecox$.

\begin{example}
  For $\Xc\coloneqq \PP^1_\field$, the Picard group $\pic(X)$ is free of rank $1$, and
  $\coxring \cong \field[T_0,T_1]$ is a Cox ring of $\Xc$ of type
  $\identity_{\pic(\Xc)}$, where the $\pic(\Xc)$-grading is the usual
  $\ZZ$-grading by the total degree, identifying effective divisor classes
  with nonnegative integers. Mapping $T_0$ and $T_1$ to arbitrary linearly
  independent linear polynomials in $T_0,T_1$ defines an automorphism of
  $\field[T_0,T_1]$ respecting the grading, but every Cox ring automorphism of
  $\coxring$ in the sense of Definition~\ref{def:cox_ring} is multiplication
  by a scalar in $\field^\times$.
\end{example}

We observe that every morphism $\coxring\to\coxring'$ of Cox rings of type
$\typecox$ in the sense of Definition \ref{def:cox_ring} (cf.~Remark
\ref{rem:equivalent_characterization_cox_ring}) is an isomorphism, because it
restricts to an isomorphism
$\coxring_{\groupelement}\to\coxring'_{\groupelement}$ for every
$\groupelement\in\gradgroup$, as it is compatible with the map $\divi$.

\begin{prop}\label{prop:morphism_cox_sheaves}
Every morphism of Cox sheaves of $\Xc$ of type $\typecox$ is an isomorphism.
\end{prop}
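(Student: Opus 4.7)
The plan is to reduce the problem degree-by-degree to a question about invertible sheaves via Lemma \ref{lem:isom_inv_sheaves}, and then exploit the $\OO_\Xc$-algebra structure together with the Cox sheaf multiplication axiom to rule out vanishing in every degree.

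Since $\morphism:\coxsheaf\to\coxsheaf'$ is a morphism of $\gradgroup$-graded $\OO_\Xc$-algebras, it restricts to $\OO_\Xc$-module morphisms $\morphism_\groupelement:\coxsheaf_\groupelement\to\coxsheaf'_\groupelement$ for each $\groupelement\in\gradgroup$. Fix $(\groupelement,\divisor)\in\gradgroupdiv$: composing $\morphism_\groupelement$ with the structure isomorphisms $\coxisom_{\groupelement,\divisor}$ and $\coxisom'_{\groupelement,\divisor}$ turns it into an endomorphism of $\OO_\Xc(\divisor)$, which by Lemma \ref{lem:isom_inv_sheaves} is multiplication by a constant $f_\groupelement\in\globalX=\field$. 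The same lemma shows that $\morphism_\groupelement$ is an isomorphism as soon as $f_\groupelement\neq 0$, so the problem reduces to proving $f_\groupelement\neq 0$ for every $\groupelement\in\gradgroup$.

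For $\groupelement=0$ the constraint $\morphism(1)=1$ forces $f_0\neq 0$: the unit morphism $\OO_\Xc\to\coxsheaf_0$ is itself a morphism of invertible sheaves sending $1\mapsto 1$, so Lemma \ref{lem:isom_inv_sheaves} identifies it with multiplication by $\coxisom_{0,0}(1)\in\field^\times$, which is a unit in every stalk and hence makes the unit morphism an isomorphism; the same holds for $\coxsheaf'$, and $\morphism(1)=1$ then gives $f_0=\coxisom'_{0,0}(1)/\coxisom_{0,0}(1)\in\field^\times$. For an arbitrary $\groupelement$ choose $\divisor$ so that both $(\groupelement,\divisor)$ and $(-\groupelement,-\divisor)$ lie in $\gradgroupdiv$. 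The compatibility axiom of Definition \ref{def:cox_sheaf} supplies constants $\compconstant,\compconstant'\in\field^\times$ with
\begin{equation*}
\coxisom_{\groupelement,\divisor}(\coxsec_1)\coxisom_{-\groupelement,-\divisor}(\coxsec_2)=\compconstant\,\coxisom_{0,0}(\coxsec_1\coxsec_2)
\end{equation*}
for all homogeneous local sections $\coxsec_1\in\coxsheaf_\groupelement$ and $\coxsec_2\in\coxsheaf_{-\groupelement}$, and the analogous identity for $\coxsheaf'$ with constant $\compconstant'$. Applying $\morphism$ to $\coxsec_1\coxsec_2$ and expanding each factor via the definitions of $f_\groupelement$, $f_{-\groupelement}$, $f_0$ produces the scalar identity $f_\groupelement f_{-\groupelement}\compconstant=\compconstant' f_0$, and since $f_0\neq 0$ this forces $f_\groupelement\neq 0$.

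The argument is conceptually short, but the main care is bookkeeping the scalars from the Cox sheaf axiom, which are determined only up to elements of $\field^\times$ (cf.\ Remark \ref{rem:up_to_constant}); the identification $\coxsheaf_0\cong\OO_\Xc$ via the algebra unit is the crucial link allowing the nondegeneracy supplied by $\morphism(1)=1$ in degree $0$ to propagate to every other graded piece through the pairing $\coxsheaf_\groupelement\otimes_{\OO_\Xc}\coxsheaf_{-\groupelement}\to\coxsheaf_0$.
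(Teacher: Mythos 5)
Your proof follows essentially the same route as the paper's: reduce each graded piece to a scalar $f_\groupelement\in\globalX$ via Lemma~\ref{lem:isom_inv_sheaves}, handle degree $0$ using $\morphism(1)=1$, and propagate invertibility to every degree through the pairing $\coxsheaf_\groupelement\otimes_{\OO_\Xc}\coxsheaf_{-\groupelement}\to\coxsheaf_0$. One small imprecision: the standing hypothesis is only $\globalX^\times=\field^\times$, not $\globalX=\field$, so by Lemma~\ref{lem:isom_inv_sheaves} you need $f_\groupelement$ to be a \emph{unit} of $\globalX$ (not merely nonzero) for $\morphism_\groupelement$ to be an isomorphism; fortunately the identity $f_\groupelement f_{-\groupelement}\compconstant=\compconstant' f_0$ you derive, with $\compconstant,\compconstant',f_0\in\field^\times$ and $\globalX$ an integral domain, yields exactly $f_\groupelement\in\globalX^\times=\field^\times$, so the argument stands as written.
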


\begin{proof}
  Let $\morphism\colon \coxsheaf\to\coxsheaf'$ be a morphism of Cox sheaves
  of $\Xc$ of type $\typecox$. Let
  $\{\coxisom_{\groupelement,\divisor}\}_{(\groupelement,\divisor)\in\gradgroupdiv}$
  and
  $\{\coxisom'_{\groupelement,\divisor}\}_{(\groupelement,\divisor)\in\gradgroupdiv}$
  be families of isomorphisms associated to $\coxsheaf$ and
  $\coxsheaf'$, respectively. For every $\groupelement\in\gradgroup$,
  fix a Cartier divisor $\divisor_{\groupelement}$ such that
  $(\groupelement,\divisor_\groupelement)\in\gradgroupdiv$, and let
  $\compconstant_{\groupelement}\in\globalX$ such that
  \begin{equation*}
    \coxisom'_{\groupelement,\divisor_{\groupelement}}
    \circ\morphism|_{\coxsheaf_{\groupelement}}\circ
    \coxisom_{\groupelement,\divisor_{\groupelement}}^{-1}
    =\compconstant_{m}\identity_{\OO_{\Xc}(\divisor_{\groupelement})}
  \end{equation*}
  (cf.~Lemma \ref{lem:isom_inv_sheaves}).  Let $\opensubset$ be an
  open subset of $\Xc$ that trivializes all elements in
  $\typecox(\gradgroup)$. Fix $\groupelement\in\gradgroup$. For every
  $\coxsec^+\in\coxsheaf_{\groupelement}(\opensubset)$ and
  $\coxsec^-\in\coxsheaf_{-\groupelement}(\opensubset)$,
  \begin{align*}
    &\beta^{-1}\compconstant_0\coxisom_{\groupelement,\divisor_{\groupelement}}(\coxsec^+)
      \coxisom_{-\groupelement,\divisor_{-\groupelement}}(\coxsec^-)=
      \coxisom'_{0,\divisor_0}(\morphism|_{\coxsheaf_0}(\coxsec^+\coxsec^-))\\
    &=\coxisom'_{0,\divisor_0}(\morphism|_{\coxsheaf_m}(\coxsec^+)
      \morphism|_{\coxsheaf_{-m}}(\coxsec^-))=
      (\beta')^{-1}\compconstant_{\groupelement}\compconstant_{-\groupelement}
      \coxisom_{\groupelement,\divisor_{\groupelement}}(\coxsec^+)
      \coxisom_{-\groupelement,\divisor_{-\groupelement}}(\coxsec^-),
  \end{align*}
  where $\beta, \beta'\in\field^\times$ are the unique constants such that 
  \begin{align*}
    &\coxisom_{\groupelement,\divisor_{\groupelement}}(\coxsec^+)
      \coxisom_{-\groupelement,\divisor_{-\groupelement}}(\coxsec^-)
      =\beta\coxisom_{0,\divisor_{0}}(\coxsec^+\coxsec^-),\\
    &\coxisom'_{\groupelement,\divisor_{\groupelement}}(\coxsec^+)
      \coxisom'_{-\groupelement,\divisor_{-\groupelement}}(\coxsec^-)
      =\beta'\coxisom'_{0,\divisor_{0}}(\coxsec^+\coxsec^-),
  \end{align*}
  respectively. Then
  $\compconstant_{\groupelement}\compconstant_{-\groupelement}
  =\beta'\beta^{-1}\compconstant_0$,
  as $\beta$ and $\beta'$ do not depend on the choice of $\coxsec^+$ and
  $\coxsec^-$.
  Since $\morphism$ is a morphism of $\OO_\Xc$-algebras and both $\coxsheaf_0$
  and $\coxsheaf_0'$ are isomorphic to $\OO_{\Xc}$, we have $\morphism(1)=1$,
  and $\morphism|_{\coxsheaf_0}\colon \coxsheaf_0\to\coxsheaf_0'$ is an isomorphism
  of $\OO_{\Xc}$-modules. Hence
  $\compconstant_0\in\globalX^\times=\field^\times$ by Lemma
  \ref{lem:isom_inv_sheaves}.
\end{proof}

As a consequence of Proposition \ref{prop:morphism_cox_sheaves} and
Lemma \ref{lem:isom_inv_sheaves}, every morphism of Cox sheaves of
$\Xc$ of type $\typecox$ induces a morphism of Cox rings of $\Xc$ of
type $\typecox$ between the rings of global sections in the sense of
Definition \ref{def:cox_ring}. Hence, for every $\typecox$, there
exists exactly one isomorphism class of Cox rings of $\Xc$ of type
$\typecox$ by Corollary \ref{cor:existence_isom_cox_sheaves}.

\begin{prop}\label{prop:morphism_cox_ring_cox_sheaf}
  Let $\coxsheaf$ and $\coxsheaf'$ be two Cox sheaves of $\Xc$ of type
  $\typecox$.  If $\gradgroup=\gradgroupeff$, then every morphism
  $\coxsheaf(\Xc)\to\coxsheaf'(\Xc)$ of Cox rings is induced by a
  unique morphism $\coxsheaf\to\coxsheaf'$ of Cox sheaves.
\end{prop}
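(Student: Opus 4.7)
The plan is to realize both Cox sheaves via Construction \ref{construction:cox_sheaf_abstract} using a common presenting algebra $\freealgebra$, and then lift $\morphism$ to a graded $\OO_\Xc$-algebra endomorphism $\tilde{\morphism}_\freealgebra$ of $\freealgebra$ that descends to $\coxsheaf\to\coxsheaf'$. Using $\gradgroup=\gradgroupeff$, pick a presentation $\presentation:\freegroup\to\gradgroup$ with a basis $\freeelement_1,\ldots,\freeelement_n$ of $\freegroup$ whose images generate $\gradgroup$ and have effective $\typecox$-types, fix effective representatives $\divisor_i$, and form $\freealgebra=\bigoplus_{\freeelement}\freeinvsheaf{\freeelement}$. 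By Proposition \ref{prop:isomorphism_cox_sheaves_abstract}, choose a character $\chi$ and a Cox-sheaf isomorphism $\coxsheaf\cong\freealgebra/\freeideal_\chi$.

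Since $\typecox(\presentation(\freeelement_i))$ is effective, $\coxring_{\presentation(\freeelement_i)}$ is nonzero, and by Remark \ref{rem:equivalent_characterization_cox_ring} the map $\morphism$ restricts there to multiplication by a unique scalar $\compconstant_i\in\field^\times$ after the identification with $H^0(\Xc,\freeinvsheaf{\freeelement_i})$ given by Construction \ref{construction:cox_sheaf_abstract}. Extend multiplicatively by $\compconstant_\freeelement:=\prod_i\compconstant_i^{a_i}$ for $\freeelement=\sum a_i\freeelement_i$, and let $\tilde{\morphism}_\freealgebra:\freealgebra\to\freealgebra$ be the $\freegroup$-graded $\OO_\Xc$-algebra endomorphism that on $\freeinvsheaf{\freeelement}$ is multiplication by the scalar $\compconstant_\freeelement$ (viewed via $\freeinvsheaf{\freeelement}\subseteq\FfieldX$). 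Set $\chi'(\kernelement):=\compconstant_{-\kernelement}\chi(\kernelement)$ for $\kernelement\in\kernfreegroup$: this is a character associated with $\freealgebra$ (same divisor as $\chi(\kernelement)$, and multiplicative), and by Proposition \ref{prop:independence_of_character_abstract} we may realize $\coxsheaf'\cong\freealgebra/\freeideal_{\chi'}$. Then $\tilde{\morphism}_\freealgebra(1-\chi(\kernelement))=1-\chi'(\kernelement)\in\freeideal_{\chi'}$, so $\tilde{\morphism}_\freealgebra$ descends to the desired morphism $\tilde{\morphism}:\coxsheaf\to\coxsheaf'$. That $\tilde{\morphism}$ induces $\morphism$ on global sections reduces to checking, for every $\groupelement$ with $\typecox(\groupelement)$ effective and every lift $\freeelement$, the equality of $\compconstant_\freeelement$ with the scalar characterizing $\morphism|_{\coxring_\groupelement}$; this follows from the multiplicativity of $\morphism$ applied to a decomposition $\groupelement=\groupelement^+-\groupelement^-$ with $\groupelement^\pm$ in the positive monoid generated by the $\presentation(\freeelement_i)$'s.

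For uniqueness, suppose $\tilde{\morphism}_1,\tilde{\morphism}_2$ both induce $\morphism$. By Proposition \ref{prop:morphism_cox_sheaves}, each $\tilde{\morphism}_i|_{\coxsheaf_\groupelement}$ is an isomorphism of invertible sheaves, so the composition $\tilde{\morphism}_1\circ\tilde{\morphism}_2^{-1}$ restricted to $\coxsheaf'_\groupelement$ is multiplication by some $\lambda_\groupelement\in\globalX^\times=\field^\times$. Agreement on the nonzero module $\coxring_\groupelement$ forces $\lambda_\groupelement=1$ whenever $\typecox(\groupelement)$ is effective. Multiplicativity of the $\tilde{\morphism}_i$ as $\OO_\Xc$-algebra morphisms makes $\lambda$ a group homomorphism $\gradgroup\to\field^\times$; since $\gradgroup=\gradgroupeff$ is generated by effective-type elements on which $\lambda$ is trivial, $\lambda\equiv 1$ and $\tilde{\morphism}_1=\tilde{\morphism}_2$.

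The main obstacle is the descent, specifically the precise matching of the character $\chi'$ for $\coxsheaf'$ through the formula $\chi'(\kernelement)=\compconstant_{-\kernelement}\chi(\kernelement)$: the freedom in Proposition \ref{prop:independence_of_character_abstract} is exactly what is needed to arrange this realization, and the verification that $\tilde{\morphism}$ really induces $\morphism$ (and not a rescaled version on each degree) hinges on the multiplicativity of $\morphism$, which forces consistency of the scalars $\compconstant_\freeelement$ extracted from different effective degrees.
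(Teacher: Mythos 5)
Your proof is correct, but it takes a genuinely different route from the paper's. The paper works entirely inside the two given Cox sheaves: it records the multiplicativity-defect constants $\beta_{\groupelement_1,\groupelement_2;\divisor_1,\divisor_2}$ and $\beta'_{\groupelement_1,\groupelement_2;\divisor_1,\divisor_2}$ of the two families of isomorphisms, extracts from $\morphism$ the scalars $\compconstant_{\groupelement,\divisor}$ on the effective degrees, establishes the cocycle-type relation among them, and then \emph{defines} the sheaf morphism on an arbitrary degree $\groupelement=\groupelement_1-\groupelement_2$ by the explicit formula $\morphism_\groupelement=(\text{constants})\cdot(\coxisom'_{\groupelement,\divisor})^{-1}\circ\coxisom_{\groupelement,\divisor}$. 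You instead route everything through Construction \ref{construction:cox_sheaf_abstract}: present both sheaves as quotients of a common $\freealgebra$, lift $\morphism$ to the diagonal scalar endomorphism $\coxsec\mapsto\compconstant_\freeelement\coxsec$ of $\freealgebra$, and observe that it carries $\freeideal_\character$ to $\freeideal_{\character'}$ with $\character'=\compconstant_{-(\cdot)}\character$. Your version buys a conceptually cleaner descent (the ideal computation replaces the $\beta$-bookkeeping) and, unlike the paper, makes the uniqueness argument explicit via Propositions \ref{prop:morphism_cox_sheaves} and \ref{prop:cox_sheaf_aut}; the cost is a heavier reliance on Propositions \ref{prop:isomorphism_cox_sheaves_abstract} and \ref{prop:independence_of_character_abstract} as black boxes, whereas the paper's argument is self-contained at this point.

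One step deserves tightening. When you invoke Proposition \ref{prop:independence_of_character_abstract} to ``realize $\coxsheaf'\cong\freealgebra/\freeideal_{\character'}$,'' you only get an isomorphism of Cox sheaves, and composing with an arbitrary such isomorphism could change the induced map on global sections by an automorphism $\automorphism_{\dualgroupelement}$; you would then have produced a sheaf morphism inducing $\morphism$ only up to such a twist. The clean fix is to note that the character $\character''$ already furnished for $\coxsheaf'$ by Proposition \ref{prop:isomorphism_cox_sheaves_abstract} (with respect to the same identifications used to extract the $\compconstant_i$) is forced to equal $\character'$: since $\morphism(1)=1$ and the images of $1$ in degree $-\kernelement$ under the two presentations are $\character(\kernelement)$ and $\character''(\kernelement)$ respectively, one gets $\character''(\kernelement)=c_{-\kernelement}\character(\kernelement)$ with $c_{-\kernelement}=\compconstant_{-\kernelement}$ by your multiplicativity check (the degree $0$ is effective). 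With that observation no re-realization is needed and the final verification you defer becomes exactly the computation you describe; the argument then goes through.
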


\begin{proof}
  Let
  $\{\coxisom_{\groupelement,\divisor}\}_{(\groupelement,\divisor)\in\gradgroupdiv}$
  and
  $\{\coxisom'_{\groupelement,\divisor}\}_{(\groupelement,\divisor)\in\gradgroupdiv}$
  be families of isomorphisms associated with $\coxsheaf$ and
  $\coxsheaf'$, respectively. For every $(\groupelement_1,\divisor_1),
  (\groupelement_2,\divisor_2)\in\gradgroupdiv$, let
  $\beta_{\groupelement_1,\groupelement_2;\divisor_1,\divisor_2}$ and
  $\beta'_{\groupelement_1,\groupelement_2;\divisor_1,\divisor_2}$ be the
  unique constants such that
  \begin{align*}
    \coxisom_{\groupelement_1, \divisor_1}(\coxsec_1)
    \coxisom_{\groupelement_2, \divisor_2}(\coxsec_2)
    &=\beta_{\groupelement_1,\groupelement_2;\divisor_1,\divisor_2}
      \coxisom_{\groupelement_1+\groupelement_2, \divisor_1+\divisor_2}(\coxsec_1\coxsec_2),\\
    \coxisom'_{\groupelement_1,\divisor_1}(\coxsec'_1)
    \coxisom'_{\groupelement_2,\divisor_2}(\coxsec'_2)
    &=\beta'_{\groupelement_1,\groupelement_2;\divisor_1,\divisor_2}
      \coxisom'_{\groupelement_1+\groupelement_2,\divisor_1+\divisor_2}(\coxsec'_1\coxsec'_2)
  \end{align*}
  for all $\coxsec_i\in \coxsheaf_{\groupelement_i}(U)$ and
  $\coxsec'_i\in \coxsheaf'_{\groupelement_i}(U)$, $i\in\{1,2\}$, and all open
  subsets $U$ of $\Xc$.

  Let $\morphism\colon \coxsheaf(\Xc)\to\coxsheaf'(\Xc)$ be a morphism of
  Cox rings of type $\typecox$.
  By definition of morphism of Cox rings of $\Xc$ of type $\typecox$
  (cf.~Remark \ref{rem:equivalent_characterization_cox_ring}), for every
  $(\groupelement,\divisor)\in\gradgroupdiv$ such that $\typecox(\groupelement)$ is
  effective, we have
  $\morphism|_{\coxsheaf(\Xc)_{\groupelement}}
  =\compconstant_{\groupelement,\divisor}(\coxisom'_{\groupelement,\divisor})^{-1}
  \circ\coxisom_{\groupelement,\divisor}\colon \coxsheaf(\Xc)_{\groupelement}
  \to\coxsheaf'(\Xc)_{\groupelement}$ for a constant
  $\compconstant_{\groupelement,\divisor}\in\field^\times$.
  Moreover,
  \begin{equation*}
    \compconstant_{\groupelement_1+\groupelement_2,\divisor_1+\divisor_2}
    =\compconstant_{\groupelement_1,\divisor_1}\compconstant_{\groupelement_2,\divisor_2}
    \beta_{\groupelement_1,\groupelement_2;\divisor_1,\divisor_2}
    (\beta'_{\groupelement_1,\groupelement_2;\divisor_1,\divisor_2})^{-1}
  \end{equation*}
  for all $(\groupelement_1,\divisor_1),(\groupelement_2,\divisor_2)\in\gradgroupdiv$ as above, as
  $\morphism$ is compatible with the multiplication in
  $\coxsheaf(\Xc)$ and in $\coxsheaf'(\Xc)$.
  
  For every $\groupelement\in\gradgroup$, write
  $\groupelement=\groupelement_1-\groupelement_2$ with
  $\groupelement_i\in\gradgroup$ such that $\typecox(\groupelement_i)$
  is effective, choose Cartier divisors $\divisor_i$
  such that $(\groupelement_i,\divisor_i)\in\gradgroupdiv$,
  $i\in\{1,2\}$, and let $\divisor\coloneqq \divisor_1-\divisor_2$.  The
  isomorphisms
  \begin{equation*}
    \compconstant_{\groupelement_1,\divisor_1}
    \compconstant_{\groupelement_2,\divisor_2}^{-1}
    \beta_{\groupelement,\groupelement_2;\divisor,\divisor_2}^{-1}
    \beta'_{\groupelement,\groupelement_2;\divisor,\divisor_2}
    {\coxisom'_{\groupelement, \divisor}}^{-1}\circ\coxisom_{\groupelement,
      \divisor}
    \colon  \coxsheaf_\groupelement \to \coxsheaf'_\groupelement
  \end{equation*}
  define a morphism of Cox sheaves $\coxsheaf\to\coxsheaf'$ that
  induces $\morphism$ at the level of global sections.
\end{proof}

\begin{prop}\label{prop:cox_sheaf_aut}
  Let $\coxsheaf$ be a Cox sheaf of $\Xc$ of type $\typecox$. For
  every
  $\dualgroupelement\in\gradgroupdual(\field)=\Hom(\gradgroup,\field^\times)$,
  let $\automorphism_{\dualgroupelement}\colon \coxsheaf\to\coxsheaf$ be the
  map defined as scalar multiplication by
  $\dualgroupelement(\groupelement)$ on $\coxsheaf_\groupelement$ for
  all $\groupelement\in\gradgroup$. Then
  $\dualgroupelement\mapsto\automorphism_{\dualgroupelement}$ defines
  an isomorphism between $\gradgroupdual(\field)$ and the group of Cox
  sheaf automorphisms of $\coxsheaf$.
\end{prop}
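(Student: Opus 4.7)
\medskip

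The plan is to verify four things: that each $\automorphism_\dualgroupelement$ is a morphism of Cox sheaves, that $\dualgroupelement\mapsto\automorphism_\dualgroupelement$ is a group homomorphism, and that this homomorphism is injective and surjective. The first three are essentially formal. Well-definedness: by construction $\automorphism_\dualgroupelement$ is $\OO_\Xc$-linear and respects the $\gradgroup$-grading; multiplicativity on local sections $\coxsec_i\in\coxsheaf_{\groupelement_i}(U)$ reads $\automorphism_\dualgroupelement(\coxsec_1\coxsec_2)=\dualgroupelement(\groupelement_1+\groupelement_2)\coxsec_1\coxsec_2=\dualgroupelement(\groupelement_1)\dualgroupelement(\groupelement_2)\coxsec_1\coxsec_2=\automorphism_\dualgroupelement(\coxsec_1)\automorphism_\dualgroupelement(\coxsec_2)$ using that $\dualgroupelement$ is a group homomorphism, and it is an automorphism because every $\dualgroupelement(\groupelement)\in\field^\times$. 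The assignment $\dualgroupelement\mapsto\automorphism_\dualgroupelement$ is a group homomorphism since pointwise scalar multiplication composes multiplicatively. Injectivity: if $\automorphism_\dualgroupelement=\identity$, then scalar multiplication by $\dualgroupelement(\groupelement)$ acts as the identity on the nonzero invertible sheaf $\coxsheaf_\groupelement$ for every $\groupelement$, forcing $\dualgroupelement(\groupelement)=1$.

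The content of the proposition lies in surjectivity. Let $\automorphism:\coxsheaf\to\coxsheaf$ be a morphism of Cox sheaves of type $\typecox$. By Proposition \ref{prop:morphism_cox_sheaves} it is an automorphism, so for every $\groupelement\in\gradgroup$ its restriction $\automorphism|_{\coxsheaf_\groupelement}$ is an automorphism of the invertible $\OO_\Xc$-module $\coxsheaf_\groupelement$. Choosing any $\divisor$ with $(\groupelement,\divisor)\in\gradgroupdiv$ and transporting along $\coxisom_{\groupelement,\divisor}$, Lemma \ref{lem:isom_inv_sheaves} shows that this automorphism is multiplication by a unique element $\compconstant_\groupelement\in\globalX^\times$. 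The standing hypothesis $\globalX^\times=\field^\times$ puts $\compconstant_\groupelement$ in $\field^\times$, and independence of $\compconstant_\groupelement$ from the choice of $\divisor$ follows from the fact (Remark \ref{rem:up_to_constant}) that two representatives of the Cox sheaf structure differ by a nonzero scalar that cancels in the comparison.

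It remains to check that $\dualgroupelement:\groupelement\mapsto\compconstant_\groupelement$ is a group homomorphism $\gradgroup\to\field^\times$, since then $\automorphism=\automorphism_\dualgroupelement$ by construction. Pick an open $U\subseteq\Xc$ on which the invertible sheaves in play are trivial, choose nowhere-vanishing local generators $\coxsec_i$ of $\coxsheaf_{\groupelement_i}(U)$, and observe that, via the isomorphisms $\coxisom_{\groupelement_i,\divisor_i}$ and the compatibility relation in Definition \ref{def:cox_sheaf}, the product $\coxsec_1\coxsec_2$ is a nowhere-vanishing local generator of $\coxsheaf_{\groupelement_1+\groupelement_2}(U)$. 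Comparing $\automorphism(\coxsec_1\coxsec_2)=\compconstant_{\groupelement_1+\groupelement_2}\coxsec_1\coxsec_2$ with $\automorphism(\coxsec_1)\automorphism(\coxsec_2)=\compconstant_{\groupelement_1}\compconstant_{\groupelement_2}\coxsec_1\coxsec_2$ gives $\compconstant_{\groupelement_1+\groupelement_2}=\compconstant_{\groupelement_1}\compconstant_{\groupelement_2}$, as desired. The only subtle point is the reduction $\compconstant_\groupelement\in\field^\times$, which is precisely where $\globalX^\times=\field^\times$ enters; everything else is bookkeeping against the tools already established.
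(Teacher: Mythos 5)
Your proof is correct and follows essentially the same route as the paper's: Lemma \ref{lem:isom_inv_sheaves} (plus $\globalX^\times=\field^\times$) forces an automorphism to act by a scalar $\compconstant_\groupelement\in\field^\times$ on each $\coxsheaf_\groupelement$, and compatibility with multiplication makes $\groupelement\mapsto\compconstant_\groupelement$ a character. The paper leaves the well-definedness, homomorphism, and injectivity checks implicit, and your extra remark about independence of the choice of $\divisor$ is not really needed since $\compconstant_\groupelement$ is already intrinsic to the action on $\coxsheaf_\groupelement$; otherwise the arguments coincide.
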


\begin{proof}
  By Lemma \ref{lem:isom_inv_sheaves}, a Cox sheaf automorphism
  $\automorphism$ of $\coxsheaf$ must be scalar multiplication by some
  $\dualgroupelement_\groupelement \in \field^\times$ on each
  homogeneous part $\coxsheaf_\groupelement$.  Moreover,
  $\dualgroupelement_\groupelement
  \dualgroupelement_{\groupelement'}=\dualgroupelement_{\groupelement+\groupelement'}$
  for all $\groupelement,\groupelement'\in\gradgroup$ as
  $\automorphism$ is compatible with the multiplication in
  $\coxsheaf$.  So, $\groupelement \mapsto
  \dualgroupelement_\groupelement$ defines a group homomorphism
  $\gradgroup \to \field^\times$, and hence an element
  $\dualgroupelement \in \gradgroupdual(\field)$ such that
  $\automorphism = \automorphism_{\dualgroupelement}$.
\end{proof}

As a consequence of Propositions \ref{prop:morphism_cox_sheaves},
\ref{prop:morphism_cox_ring_cox_sheaf} and \ref{prop:cox_sheaf_aut}, the group
of Cox ring automorphisms of a Cox ring of $\Xc$ of type $\typecox$ is
isomorphic to $\gradgroupeffdual(\field)$.

\begin{prop}\label{prop:cox_sheaves_torsors}
  If $\torsormor\colon \Yc\to\Xc$ is an $\Xc$-torsor under $\gradgroupdual$ of type
  $\typecox$, then $\torsormor_*\OO_{\Yc}$ is a Cox sheaf of $\Xc$ of type
  $\typecox$. Conversely, if $\coxsheaf$ is a Cox sheaf of $\Xc$ of type
  $\typecox$, then the relative spectrum $\spec_\Xc\coxsheaf\to\Xc$ is an
  $\Xc$-torsor under $\gradgroupdual$ of type $\typecox$.
\end{prop}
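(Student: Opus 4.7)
The plan is to tackle the two directions separately, using Lemma \ref{lem:isom_inv_sheaves} as the main technical bridge between the graded structure of $\coxsheaf$ and the torsor structure of $\spec_\Xc\coxsheaf$.

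For the first direction, if $\torsormor\colon \Yc \to \Xc$ is an $\Xc$-torsor under $\gradgroupdual$, then the $\gradgroupdual$-action on $\Yc$ yields an $\gradgroup$-grading on $\torsormor_*\OO_\Yc = \bigoplus_{\groupelement \in \gradgroup}(\torsormor_*\OO_\Yc)_\groupelement$, via the standard equivalence between $\gradgroupdual$-actions and $\gradgroup$-gradings on quasi-coherent $\OO_\Xc$-algebras. Over the separably closed field $\field$, Zariski-local triviality of torsors under quasitori (a consequence of Hilbert~90 together with the diagonalizability of $\gradgroupdual$) implies that each $(\torsormor_*\OO_\Yc)_\groupelement$ is locally free of rank one. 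Its class in $\pic(\Xc)$ equals $\typecox(\groupelement)$ by the very definition of $\type$ in \eqref{eq:exact_sequence_type}, since the pushforward $\G_m$-torsor along the character corresponding to $\groupelement$ is classified by the invertible sheaf $(\torsormor_*\OO_\Yc)_\groupelement$. Fixing isomorphisms $\coxisom_{\groupelement,\divisor}\colon (\torsormor_*\OO_\Yc)_\groupelement \to \OO_\Xc(\divisor)$ for each $(\groupelement,\divisor) \in \gradgroupdiv$, the multiplicative compatibility required by Definition \ref{def:cox_sheaf} is then automatic: for $\coxsec_i \in (\torsormor_*\OO_\Yc)_{\groupelement_i}(U)$, the two sections $\coxisom_{\groupelement_1,\divisor_1}(\coxsec_1)\coxisom_{\groupelement_2,\divisor_2}(\coxsec_2)$ and $\coxisom_{\groupelement_1+\groupelement_2,\divisor_1+\divisor_2}(\coxsec_1\coxsec_2)$ of the invertible sheaf $\OO_\Xc(\divisor_1+\divisor_2)$ differ by a global scalar in $\globalX^\times = \field^\times$ by Lemma \ref{lem:isom_inv_sheaves}.

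For the converse, a Cox sheaf $\coxsheaf$ of type $\typecox$ comes with an $\gradgroup$-grading that determines a coaction of $\OO_\Xc \otimes_\field \field[\gradgroup]$ on $\coxsheaf$, equivalently, an $\gradgroupdual$-action on $\spec_\Xc\coxsheaf$ over $\Xc$. To verify the torsor property it suffices to check (i) that the natural map
\begin{equation*}
\coxsheaf \otimes_{\OO_\Xc}\coxsheaf \longrightarrow \coxsheaf \otimes_\field \field[\gradgroup], \qquad \coxsec \otimes \coxsec' \longmapsto \coxsec\coxsec' \otimes \groupelement' \text{ for } \coxsec' \in \coxsheaf_{\groupelement'},
\end{equation*}
is an isomorphism of $\OO_\Xc$-algebras, and (ii) that $\spec_\Xc\coxsheaf \to \Xc$ is faithfully flat. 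Claim (i) decomposes into the assertion that each multiplication $\coxsheaf_{\groupelement} \otimes_{\OO_\Xc} \coxsheaf_{\groupelement'} \to \coxsheaf_{\groupelement+\groupelement'}$ is an isomorphism, which via the isomorphisms $\coxisom$ of Definition \ref{def:cox_sheaf} reduces, up to the nonzero constant from that definition, to the standard isomorphism $\OO_\Xc(\divisor_1)\otimes\OO_\Xc(\divisor_2)\cong \OO_\Xc(\divisor_1+\divisor_2)$ between invertible sheaves, hence follows from Lemma \ref{lem:isom_inv_sheaves}. Claim (ii) holds because each graded piece $\coxsheaf_{\groupelement}$ is invertible, so $\coxsheaf$ is locally free as an $\OO_\Xc$-module. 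Finally, the type of $\spec_\Xc\coxsheaf \to \Xc$ at $\groupelement \in \gradgroup$ is the class of the pushforward $\G_m$-torsor along the character corresponding to $\groupelement$; by the geometric identification already used in the forward direction, this class equals $[\coxsheaf_\groupelement] = [\OO_\Xc(\divisor)] = \typecox(\groupelement)$.

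The main obstacle I expect is making rigorous, in both directions, the identification between the pushforward of an $\gradgroupdual$-torsor along the character associated to $\groupelement$ and the $\G_m$-torsor attached to the corresponding graded piece of the pushforward structure sheaf. This requires unwinding the contracted product $\Yc \times^{\gradgroupdual} \G_m$ in terms of the $\gradgroup$-grading on $\torsormor_*\OO_\Yc$; once this general fact is available, the remainder is a direct verification using Lemma \ref{lem:isom_inv_sheaves}, the Cox sheaf axioms, and standard properties of invertible sheaves.
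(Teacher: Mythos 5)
Your proposal is correct in substance and close to the paper's argument, with two local differences. In the converse direction you verify the torsor property via the graph-map criterion ($\coxsheaf\otimes_{\OO_\Xc}\coxsheaf\to\coxsheaf\otimes_\field\field[\gradgroup]$ an isomorphism, plus faithful flatness), whereas the paper exhibits Zariski-local triviality directly: on any open $U$ trivializing the finitely many classes generating $\typecox(\gradgroup)$ one has $\coxsheaf|_U\cong\OO_U[\gradgroup]$ equivariantly. The two routes are equivalent here; the paper's makes local finite generation of $\coxsheaf$ as an $\OO_\Xc$-algebra visible (so that $\spec_\Xc\coxsheaf\to\Xc$ is of finite type, which the definition of torsor requires and which your claim (ii) leaves implicit). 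The step you explicitly defer --- identifying the pushforward of the $\gradgroupdual$-torsor along the character $\groupelement$ with the $\G_m$-torsor of the degree-$\groupelement$ piece --- is exactly what the paper's proof supplies, and it is shorter than you anticipate: choosing a covering $\{U_i\}$ trivializing $\torsormor$ and a cocycle $(\cocycleY_{i,j})$ with values in $\Hom(\gradgroup,\OO_\Xc(U_i\cap U_j)^\times)$ representing $[\Yc]$, the evaluated cocycle $(\cocycleY_{i,j}(\groupelement))_{i,j}$ is simultaneously a cocycle for $\groupelement_*\Yc$ (by definition of pushforward) and the transition cocycle of $(\torsormor_*\OO_\Yc)_\groupelement$ (by the gluing description of the grading). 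With that observation both your ``type'' computations go through, and your use of Lemma \ref{lem:isom_inv_sheaves} for the compatibility constants is a clean repackaging of the paper's explicit computation with the local equations $f_{i,\groupelement}$.

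One justification is imprecise: Zariski-local triviality of $\torsormor$ does not follow from Hilbert 90 and diagonalizability alone when $\gradgroup$ has torsion, since a $\mu_n$-torsor need not be Zariski-locally trivial in general. Here it holds because $\field$ is separably closed, the torsion of $\gradgroup$ is coprime to the characteristic, and $\globalX^\times=\field^\times$, so Kummer theory gives $H^1_{\text{\it{\'et}}}(\Xc,\mu_n)\cong\pic(\Xc)[n]$ and every class is represented by a Zariski-locally trivial torsor. The paper's proof tacitly relies on the same fact when it picks an open covering trivializing $\torsormor$, so this is a point to make explicit rather than an error specific to your approach.
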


\begin{proof}
  Let $\torsormor\colon \Yc\to\Xc$ be an $\Xc$-torsor under $\gradgroupdual$ of type
  $\typecox$.  Let $\{U_i\}_i$ be an open covering of $\Xc$ that trivializes
  $\torsormor$, and let $(\cocycleY_{i,j})_{i,j}$ with
  \begin{equation*}
    \cocycleY_{i,j}\in \Hom(\gradgroup,\OO_{\Xc}(U_i\cap U_j)^\times)
    \cong\gradgroupdual(U_i\cap U_j)
  \end{equation*}
  be a cocycle representing
  $[\Yc]\in H^1_{\text{\it{\'et}}}(\Xc,\gradgroupdual)$.  By definition of the
  $\type$ map (cf.~Section~\ref{subsection:generalized_cox_rings}), for every
  $\groupelement\in\gradgroup$, there exists a Cartier divisor
  $\divisor_\groupelement=\{(U_i,f_{i,\groupelement})\}_i$ such that
  $[\divisor_\groupelement]=\typecox(\groupelement)$ in $\pic(\Xc)$ and
  $\cocycleY_{i,j}(\groupelement)=f_{i,\groupelement}f_{j,\groupelement}^{-1}$.
  The isomorphisms of $\OO_{U_i}$-algebras
  \begin{equation*}
    \torsormor_*\OO_{\Yc}|_{U_i}\cong\OO_{U_i}[\gradgroup]\cong
    \bigoplus_{\groupelement\in\gradgroup}f^{-1}_{i,\groupelement}\OO_{U_i}
  \end{equation*} 
  glue over the cocycle $(\cocycleY_{i,j})_{i,j}$ to give an isomorphism of
  $\OO_{\Xc}$-modules
  \begin{equation*}
    \coxisom\colon \torsormor_*\OO_{\Yc}\to\bigoplus_{\groupelement\in\gradgroup}
    \OO_\Xc(\divisor_\groupelement).
  \end{equation*}
  Hence $\torsormor_*\OO_{\Yc}$ is an $\gradgroup$-graded $\OO_\Xc$-algebra.
  For all $\groupelement\in\gradgroup$, let
  $\coxisom_{\groupelement,\divisor_\groupelement}$ be the isomorphism
  \begin{equation*}
    \coxisom|_{(\torsormor_*\OO_{\Yc})_\groupelement}\colon (\torsormor_*\OO_{\Yc})_\groupelement
    \to\OO_\Xc(\divisor_\groupelement),
  \end{equation*}
  where
  $(\torsormor_*\OO_{\Yc})_\groupelement$ is the degree-$\groupelement$-part of
  $\torsormor_*\OO_{\Yc}$.
  Let $U$ be an open subset of $\Xc$, and
  $\coxsec_1,\coxsec_2\in\torsormor_*\OO_{\Yc}(U)$ homogeneous of degree
  $\groupelement_1, \groupelement_2$, respectively. We observe that, for all
  $i$,
  \begin{equation*}
    \coxisom(\coxsec_1\coxsec_2)|_{U\cap U_i}
    =f_{i,\groupelement_1+\groupelement_2}^{-1}
    f_{i,\groupelement_1}\coxisom(\coxsec_1)|_{U\cap U_i}
    f_{i,\groupelement_2}\coxisom(\coxsec_2)|_{U\cap U_i},
  \end{equation*}
  where the product on the right is computed in $\FfieldX$. Since
  $f_{i,\groupelement_1+\groupelement_2}^{-1}f_{i,\groupelement_1}f_{i,\groupelement_2}$
  belongs to $\OO_\Xc(U_i)^\times$ for all $i$, and
  $f_{i,\groupelement_1+\groupelement_2}^{-1}f_{i,\groupelement_1}f_{i,\groupelement_2}
  =f_{j,\groupelement_1+\groupelement_2}^{-1}f_{j,\groupelement_1}f_{j,\groupelement_2}$
  in $\FfieldX^\times$ for all $i,j$ by restricting
  $\coxisom$ and $\coxsec_1, \coxsec_2$ to $U\cap U_i\cap U_j$, the element
  $\compconstant\coloneqq f_{i,\groupelement_1+\groupelement_2}f_{i,\groupelement_1}^{-1}
  f_{i,\groupelement_2}^{-1}$ belongs to $\globalX^\times=\field^\times$, and
  \begin{equation*}
    \coxisom_{\groupelement_1,\divisor_{\groupelement_1}}(\coxsec_1)
    \coxisom_{\groupelement_2,\divisor_{\groupelement_2}}(\coxsec_2)=
    \compconstant
    \coxisom_{\groupelement_1+\groupelement_2,\divisor_{\groupelement_1+\groupelement_2}}
    (\coxsec_1\coxsec_2),
  \end{equation*}
  for all $\coxsec_1,\coxsec_2\in\torsormor_*\OO_{\Yc}(U)$ homogeneous of
  degree $\groupelement_1, \groupelement_2$, respectively.

  Conversely, let $\coxsheaf$ be a Cox sheaf of $\Xc$ of type $\typecox$. The
  morphism $\torsormor\colon \spec_\Xc\coxsheaf\to\Xc$ induced by
  $\OO_\Xc\subseteq\coxsheaf$ is surjective.  For every open subset $U$ of
  $\Xc$ that trivializes all elements of $\typecox(\gradgroup)$, there are
  isomorphisms
  \begin{equation*}
    \coxsheaf|_{U}\cong\OO_U[\gradgroup]\cong\bigoplus_{\groupelement\in\gradgroup}\OO_{U}.
  \end{equation*}
  Therefore, $\coxsheaf$ is locally free as an $\OO_\Xc$-module and locally
  finitely generated as an $\OO_\Xc$-algebra, and the morphism $\torsormor$ is
  flat and of finite type. Moreover, $\spec_\Xc\coxsheaf$ is locally
  isomorphic to $\gradgroupdual_{\Xc}\cong\spec_{\Xc}\OO_\Xc[\gradgroup]$ with
  the natural action of $\gradgroupdual_{\Xc}$ on itself. Hence $\torsormor$
  is an $\Xc$-torsor under $\gradgroupdual$.

  Let $\{U_i\}_i$ be an open covering that trivializes all elements of
  $\typecox(\gradgroup)$. Then it trivializes $\torsormor$. Let
  $(\cocycleY_{i,j})_{i,j}$ with
  \begin{equation*}
    \cocycleY_{i,j}\in\Hom(\gradgroup,\OO_\Xc(U_i\cap U_j)^\times)
  \end{equation*}
  be a cocycle representing the class of $\spec_\Xc\coxsheaf$ in
  $H^1_{\text{\it{\'et}}}(\Xc,\gradgroupdualX)$. By definition of the $\type$
  map, the cocycle $(\cocycleY_{i,j}(\groupelement))_{i,j}$ represents the
  isomorphism class of $\coxsheaf_{\groupelement}$ in $\pic(\Xc)$ for all
  $\groupelement\in\gradgroup$. Since $\coxsheaf$ is a Cox sheaf of type
  $\typecox$, for every $\groupelement\in\gradgroup$, there is an isomorphism
  $\coxsheaf_\groupelement\cong\OO_X(\divisor_m)$, where
  $\divisor_\groupelement$ is a Cartier divisor such that
  $[\divisor_\groupelement]=\typecox(\groupelement)$. Therefore, the cocycle
  $(\cocycleY_{i,j}(m))_{i,j}$ represents $\typecox(\groupelement)$ in
  $\Pic(\Xc)$, and $\type([\spec_\Xc\coxsheaf])=\typecox$.
\end{proof}

\section{Over nonclosed fields}\label{section:over_nonclosed_fields}

Let $\field$ be a field. We fix a separable closure $\fieldbar$ of $\field$,
with Galois group $\galois\coloneqq \gal(\fieldbar/\field)$, and every algebraic
extension of $\field$ mentioned later is contained in $\fieldbar$.  In this
section, $\Xc$ always denotes a $\field$-variety such that
$\Xcbar\coloneqq \Xc\times_{\spec\field}\spec\fieldbar$ has only constant invertible
regular functions (i.e., $\globalXbar^\times=\fieldbar^\times$, where
$\globalXbar\coloneqq  H^0(\Xcbar,\OO_{\Xcbar})$).

The action of $\galois$ on $\fieldbar$ induces an action on
$A\otimes_\field \fieldbar$ (with $\galoiselement \in \galois$ acting
via $\identity_A \otimes \galoiselement$) for every $\field$-algebra
$A$, and similarly on $\OO_{\Xcbar} \cong \OO_\Xc
\otimes_\field\fieldbar$. The action of an element
$\galoiselement\in\galois$ on $\Xcbar$ is the one induced by the
action of $\galoiselement^{-1}$ on $\OO_{\Xcbar}$.  For every
$\galoiselement\in \galois$, we denote by $\galoiselement\divisor$ the
natural Galois action on a divisor $\divisor\in\CaDiv(\Xcbar)$, by
$\galoiselement(f)$ the natural Galois action on an element
$f\in\fieldbar(\Xc)$. All these actions are \emph{continuous} (with
respect to the Krull topology on $\galois$ and the discrete topology
on the other objects).  We will denote by $\galoiselement*\coxsec$ an
action of $\galoiselement \in \galois$ on a section $\coxsec$ of a Cox
sheaf (or an element $\coxsec$ of a Cox ring) of $\Xcbar$.

From here on, $\gradgroup$ denotes a $\galois$-module that is finitely
generated as an abelian group, and whose torsion is coprime to the
characteristic of $\field$.  Let $\typecox\colon \gradgroup\to\pic(\Xcbar)$ be
a homomorphism of $\galois$-modules.  Let
$\gradgroupdual\coloneqq \spec\fieldbar[\gradgroup]^\galois$ be the quasitorus
dual to $\gradgroup$ under the antiequivalence of categories recalled in
Section~\ref{subsection:generalized_cox_rings}. Here,
$\fieldbar[\gradgroup]^\galois$ denotes the subring of $\galois$-invariant
elements of $\fieldbar[\gradgroup]$.  As in
Remark~\ref{rem:effective_degrees}, let $\gradgroupeff$ be the subgroup of
$\gradgroup$ generated by the elements $\groupelement$ such that
$\typecox(\groupelement)$ is effective in $\pic(\Xcbar)$.

\begin{defin}\label{def:galois_action}
  A continuous $\galois$-action on a Cox ring $\coxring$ of $\Xcbar$
  of type $\typecox$ is called \emph{\compatible} if
  $\divi(\galoiselement*\coxsec)=\galoiselement\divi(\coxsec)$ for all
  nonzero homogeneous $\coxsec\in\coxring$ and all
  $\galoiselement\in\galois$.  A \emph{$\galois$-equivariant Cox ring
    of $\Xcbar$ of type $\typecox$} is a Cox ring of $\Xcbar$ of type
  $\typecox$ with a {\compatible} $\galois$-action.

  A continuous $\galois$-action on a Cox sheaf $\coxsheaf$ of $\Xcbar$
  of type $\typecox$ is called \emph{\compatible} if, given an
  associated family of isomorphisms
  $\{\coxisom_{\groupelement,\divisor}\}_{(\groupelement,\divisor)\in\gradgroupdiv}$,
  for every $\galoiselement\in \galois$ and
  $(\groupelement,\divisor)\in\gradgroupdiv$, the automorphism of
  $\coxsheaf$ defined by the action of $\galoiselement$ restricts to
  an isomorphism $\coxsheaf_{\groupelement}\to
  \coxsheaf_{\galoiselement\groupelement}$ such that
  $\galoiselement^{-1}\circ\coxisom_{\galoiselement\groupelement,\galoiselement\divisor}\circ
  \galoiselement\circ\coxisom_{\groupelement,\divisor}^{-1}$ is an
  automorphism of $\OO_{\Xcbar}(\divisor)$.  A
  \emph{$\galois$-equivariant Cox sheaf of $\Xcbar$ of type
    $\typecox$} is a Cox sheaf of $\Xcbar$ of type $\typecox$ with a
  {\compatible} $\galois$-action.
  
  A \emph{morphism of $\galois$-equivariant Cox sheaves (rings) of
    $\Xcbar$ of type $\typecox$} is a $\galois$-equi\-variant morphism
  of Cox sheaves (respectively, rings) of $\Xcbar$ of type $\typecox$.
\end{defin}

\begin{remark}\label{rem:equivalent_galois_action}
  Using the equivalent definition of Cox ring provided by Proposition
  \ref{prop:equivalent_definition_cox_ring_div}, a continuous $\galois$-action
  on a Cox ring $\coxring$ of $\Xcbar$ of type $\typecox$ is
  \emph{\compatible} if and only if, given an associated family of
  isomorphisms
  $\{\coxisom_{\groupelement,\divisor}\}_{(\groupelement,\divisor)\in\gradgroupdiv}$,
  for every $\galoiselement\in \galois$ and
  $(\groupelement,\divisor)\in\gradgroupdiv$, the automorphism of $\coxring$
  defined by the action of $\galoiselement$ restricts to an isomorphism
  $\coxring_{\groupelement}\to \coxring_{\galoiselement\groupelement}$ such
  that
  $\galoiselement^{-1}\circ
  \coxisom_{\galoiselement\groupelement,\galoiselement\divisor}\circ
  \galoiselement\circ\coxisom_{\groupelement,\divisor}^{-1}=
  \compconstant\identity_{H^0(\Xcbar,\OO_{\Xcbar}(\divisor))}$
  with $\compconstant\in\fieldbar^\times$.  We observe that a natural
  $\galois$-action on a Cox sheaf $\coxsheaf$ of $\Xcbar$ of type $\typecox$
  induces a natural $\galois$-action on the Cox ring $\coxsheaf(\Xcbar)$ of
  $\Xcbar$ of type $\typecox$.
\end{remark}

\begin{defin}\label{def:cox_sheaf_nonclosed}
  A \emph{Cox ring of $\Xc$ of type $\typecox$} is a $\field$-algebra
  $\coxring$ together with a structure of Cox ring of $\Xcbar$ of type
  $\typecox$ on $\coxringbar\coloneqq \coxring\otimes_\field\fieldbar$ such that
  $\coxringbar$ with the induced action of $\galois$ is a
  $\galois$-equivariant Cox ring of $\Xcbar$.  A \emph{Cox sheaf of $\Xc$ of
    type $\typecox$} is a sheaf $\coxsheaf$ of $\OO_{\Xc}$-algebras together
  with a structure of Cox sheaf of $\Xcbar$ of type $\typecox$ on
  $\coxsheafbar\coloneqq \coxsheaf\otimes_\field\fieldbar$ such that $\coxsheafbar$
  with the induced action of $\galois$ is a $\galois$-equivariant Cox sheaf of
  $\Xcbar$.
  
  A \emph{morphism of Cox rings of $\Xc$ of type $\typecox$} is a morphism of
  $\field$-algebras $\morphism\colon \coxring\to\coxring'$ such that
  $\morphism\otimes\identity_{\fieldbar}\colon \coxringbar\to\coxringbar'$ is a
  morphism of Cox rings of $\Xcbar$ of type $\typecox$.  A \emph{morphism of
    Cox sheaves of $\Xc$ of type $\typecox$} is a morphism of
  $\OO_{\Xc}$-algebras $\morphism\colon \coxsheaf\to\coxsheaf'$ such that
  $\morphism\otimes\identity_{\fieldbar}\colon \coxsheafbar\to\coxsheafbar'$ is a
  morphism of Cox sheaves of $\Xcbar$ of type $\typecox$.
\end{defin}

\begin{prop}\label{prop:galois_descent_cox_rings_sheaves}
  The covariant functors
  \begin{align*}
    \big\{\text{Cox rings of $\Xc$ of type $\typecox$}\big\}
    & \longrightarrow \left\{
      \begin{gathered}
        \text{$\galois$-equivariant Cox rings}\\
        \text{of $\Xcbar$ of type $\typecox$}
      \end{gathered}
    \right\}\\
    \coxring\quad
    & \longmapsto\quad \coxringbar
  \end{align*}
  and
  \begin{align*}
    \big\{\text{Cox sheaves of $\Xc$ of type $\typecox$}\big\}
    & \longrightarrow\left\{
      \begin{gathered}
        \text{$\galois$-equivariant Cox sheaves}\\
        \text{of $\Xcbar$ of type $\typecox$}
      \end{gathered}
    \right\}\\
    \coxsheaf\quad
    & \longmapsto\quad \coxsheafbar
  \end{align*}  
  are equivalences of categories, with inverse functor $H^0(\galois,\cdot)$.
\end{prop}

\begin{proof}
  Let $\coxring$ be a $\galois$-equivariant Cox ring of $\Xcbar$ of type
  $\typecox$, and $\coxring^\galois\coloneqq H^0(\galois,\coxring)$ its subring of
  $\galois$-invariant elements.  Since the action of $\galois$ on $\coxring$
  is continuous, there is an isomorphism
  $\coxring^\galois\otimes_\field\fieldbar\cong \coxring$ by \cite[Proposition
  16.15]{Milne}.  Similarly, if $\coxsheaf$ is a $\galois$-equivariant Cox
  sheaf of $\Xcbar$ of type $\typecox$, then the sheaf $\coxsheaf^\galois$
  defined by $\coxsheaf^\galois(U)\coloneqq H^0(\galois,\coxsheaf(U_{\fieldbar}))$ for
  all open subsets $U$ of $\Xc$ is a Cox sheaf of $\Xc$ of type $\typecox$.
  Moreover, if $\psi\colon \coxsheaf\to\coxsheaf'$ is a morphism of
  $\galois$-equivariant Cox sheaves of $\Xcbar$ of type $\typecox$, then
  $\psi(\coxsheaf(\opensubsetbar)^\galois)
  \subseteq\coxsheaf'(\opensubsetbar)^\galois$ for every open subset $U$ of
  $\Xc$. Hence $\psi$ restricts to a unique morphism
  $\psi^\galois\colon \coxsheaf^\galois\to{\coxsheaf'}^\galois$ of Cox sheaves of
  $\Xc$ of type $\typecox$ such that
  $\psi=\psi^\galois\otimes\identity_{\fieldbar}$ under the identifications
  $\coxsheaf\cong(\coxsheaf^\galois)_{\fieldbar}$ and
  $\coxsheaf'\cong({\coxsheaf'}^\galois)_{\fieldbar}$.
\end{proof}

\begin{prop}\label{prop:equiv_cox_ring_sheaf}
  Let $\coxsheaf$ be a Cox sheaf of $\Xcbar$ of type $\typecox$. If
  $\gradgroup=\gradgroupeff$, then every {\compatible}
  $\galois$-action on $\coxsheaf(\Xcbar)$ is induced by a
  {\compatible} $\galois$-action on $\coxsheaf$.
\end{prop}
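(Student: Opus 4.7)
The plan is to extend the given natural $\galois$-action on $\coxring:=\coxsheaf(\Xcbar)$ to a natural action on $\coxsheaf$ itself by applying Proposition~\ref{prop:morphism_cox_ring_cox_sheaf} one group element at a time, the hypothesis $\gradgroup=\gradgroupeff$ being precisely what that proposition requires. The subtlety is that the Galois action on $\coxring$ is $\fieldbar$-semilinear and shifts the grading from $\groupelement$ to $\galoiselement\groupelement$, so it is not directly a morphism of Cox rings in the sense of Definition~\ref{def:cox_ring}; one must first reinterpret it as one by twisting the target.

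For each $\galoiselement\in\galois$ I would introduce a twisted Cox sheaf $\coxsheaf^\galoiselement$ of $\Xcbar$ of type $\typecox$, obtained essentially as the Galois pullback of $\coxsheaf$ with grading shifted by $\coxsheaf^\galoiselement_\groupelement:=\coxsheaf_{\galoiselement\groupelement}$. Using the Galois-equivariance of $\typecox$ and the identity $[\galoiselement\divisor]=\galoiselement[\divisor]$ in $\pic(\Xcbar)$, one checks that setting $\coxisom^\galoiselement_{\groupelement,\divisor}:=\galoiselement^{-1}\circ\coxisom_{\galoiselement\groupelement,\galoiselement\divisor}$ equips $\coxsheaf^\galoiselement$ with a genuine Cox sheaf structure of type $\typecox$. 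With this twist in place, the semilinear map $\galoiselement*:\coxring\to\coxring$ becomes an honest $\fieldbar$-linear $\gradgroup$-graded ring homomorphism $\galoiselement*:\coxring\to\coxring^\galoiselement:=\coxsheaf^\galoiselement(\Xcbar)$, and the naturality hypothesis $\divi(\galoiselement*s)=\galoiselement\divi(s)$ translates verbatim into the compatibility relation of Definition~\ref{def:cox_ring} between $\coxring$ and $\coxring^\galoiselement$. Proposition~\ref{prop:morphism_cox_ring_cox_sheaf} then produces a unique morphism of Cox sheaves $\widetilde{\galoiselement*}:\coxsheaf\to\coxsheaf^\galoiselement$ extending $\galoiselement*$ on global sections, which after untwisting is precisely the data of a $\galoiselement$-semilinear sheaf automorphism of $\coxsheaf$ sending $\coxsheaf_\groupelement$ to $\coxsheaf_{\galoiselement\groupelement}$ and reducing to $\galoiselement*$ on $\coxring$.

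To conclude one verifies that $\{\widetilde{\galoiselement*}\}_{\galoiselement\in\galois}$ is a continuous natural action in the sense of Definition~\ref{def:galois_action}. The cocycle relation $(\galoiselement\galoiselement')*=\galoiselement*\circ\galoiselement'*$ already holds on $\coxring$ by hypothesis, so the uniqueness clause of Proposition~\ref{prop:morphism_cox_ring_cox_sheaf} propagates it to the sheaf level; naturality is automatic, since $\widetilde{\galoiselement*}$ is by construction a morphism of Cox sheaves between $\coxsheaf$ and $\coxsheaf^\galoiselement$, and unwinding the definition of $\coxisom^\galoiselement_{\groupelement,\divisor}$ yields exactly the compatibility clause of Definition~\ref{def:galois_action}. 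The main obstacle will be the careful bookkeeping in the twist construction, namely verifying that $\coxsheaf^\galoiselement$ is a Cox sheaf of type $\typecox$ (and not of some shifted type) and that Cox sheaf morphisms $\coxsheaf\to\coxsheaf^\galoiselement$ correspond bijectively to $\galoiselement$-semilinear sheaf automorphisms of $\coxsheaf$ shifting the grading by $\galoiselement$; a secondary check is continuity of the resulting global action, which should follow from continuity on $\coxring$ together with the local description of $\coxsheaf$ and the continuity of the Galois action on $\OO_{\Xcbar}$.
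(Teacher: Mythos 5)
Your proposal is correct in substance, but it reorganizes the argument rather than reproducing it. The paper's proof is a direct construction: using $\gradgroup=\gradgroupeff$, it extracts, for each $\galoiselement\in\galois$ and each $(\groupelement,\divisor)\in\gradgroupdiv$ with $\typecox(\groupelement)$ effective, the unique scalar $\compconstant_{\galoiselement,\groupelement,\divisor}\in\fieldbar^\times$ measuring the discrepancy between $\coxisom_{\galoiselement\groupelement,\galoiselement\divisor}\circ\galoiselement$ and $\galoiselement\circ\coxisom_{\groupelement,\divisor}$ on global sections, extends these scalars multiplicatively to all of $\gradgroup$ by writing $\groupelement=\groupelement^+-\groupelement^-$, and then defines the sheaf-level action degreewise by $\galoiselement*\coxsec:=\compconstant_{\galoiselement,\groupelement,\divisor}(\coxisom_{\galoiselement\groupelement,\galoiselement\divisor}^{-1}\circ\galoiselement\circ\coxisom_{\groupelement,\divisor})(\coxsec)$. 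You instead route everything through Proposition~\ref{prop:morphism_cox_ring_cox_sheaf} applied to a conjugate Cox sheaf $\coxsheaf^{\galoiselement}$; the extension from effective degrees to all of $\gradgroup$ then happens inside that proposition's proof rather than in yours, and you get the cocycle relation from its uniqueness clause instead of from an explicit computation with the constants. This buys modularity and a cleaner explanation of \emph{why} $\gradgroup=\gradgroupeff$ is needed, at the price of the conjugate-sheaf formalism, which is exactly the bookkeeping you defer. One point there deserves more care than you give it: twisting the family $\coxisom^{\galoiselement}_{\groupelement,\divisor}:=\galoiselement^{-1}\circ\coxisom_{\galoiselement\groupelement,\galoiselement\divisor}$ does not by itself make $\galoiselement*$ an $\fieldbar$-algebra morphism — you must also replace the $\OO_{\Xcbar}$-algebra structure of the target by its $\galoiselement$-conjugate (the pullback along $\spec\galoiselement$), since Definition~\ref{def:cox_ring} and Proposition~\ref{prop:morphism_cox_ring_cox_sheaf} are stated for genuinely $\fieldbar$-linear maps; with that standard repair, the compatibility with $\divi$ follows from $\divi(\galoiselement*s)=\galoiselement\divi(s)$ exactly as you indicate, and the naturality clause of Definition~\ref{def:galois_action} comes out of Lemma~\ref{lem:isom_inv_sheaves} applied to the resulting Cox sheaf morphism. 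Continuity needs the same short argument in either version, so you are at the paper's level of rigor there.
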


\begin{proof}
  Assume that $\coxsheaf(\Xcbar)$ is endowed with a {\compatible}
  $\galois$-action.  Let
  $\coxisom_{\groupelement,\divisor}\colon \coxsheaf_{\groupelement}\to\OO_{\Xcbar}(\divisor)$,
  for $(\groupelement,\divisor)\in\gradgroupdiv$, be a family of isomorphisms
  associated with $\coxsheaf$. For every $\galoiselement\in\galois$ and
  $(\groupelement,\divisor)\in\gradgroupdiv$ such that
  $\typecox(\groupelement)$ is effective in $\pic(\Xcbar)$, let
  $\compconstant_{\galoiselement,\groupelement,\divisor}\in\fieldbar^\times$
  be the unique constant such that
  \begin{equation*}
    \coxisom_{\galoiselement\groupelement,\galoiselement\divisor}\circ
    \galoiselement=\compconstant_{\galoiselement,\groupelement,\divisor}
    \galoiselement\circ\coxisom_{\groupelement,\divisor}.
  \end{equation*}
  For an arbitrary $(\groupelement,\divisor)\in\gradgroupdiv$, let
  $(\groupelement^+,\divisor^+),(\groupelement^-,\divisor^-)\in\gradgroupdiv$
  such that $\groupelement=\groupelement^+-\groupelement^-$,
  $\divisor=\divisor^+-\divisor^-$, and $\typecox(\groupelement^+)$ and
  $\typecox(\groupelement^-)$ are both effective in $\pic(\Xcbar)$. We define
  \begin{equation*}
    \compconstant_{\galoiselement,\groupelement,\divisor}
    \coloneqq \compconstant_{\galoiselement,\groupelement^+,\divisor^+}
    \compconstant_{\galoiselement,\groupelement^-,\divisor^-}^{-1}
    \beta\galoiselement(\gamma^{-1}),
  \end{equation*}
  where $\beta,\gamma\in\fieldbar^\times$ are the unique constants such that
  \begin{align*}
    \coxisom_{\groupelement,\divisor}(\coxsec)
    \coxisom_{\groupelement^-,\divisor^-}(\coxsec')
    &=\gamma\coxisom_{\groupelement^+,\divisor^+}(\coxsec\coxsec'),
    \\
    \coxisom_{\galoiselement\groupelement,\galoiselement\divisor}(\galoiselement*\coxsec)
    \coxisom_{\galoiselement\groupelement^-,\galoiselement\divisor^-}(\galoiselement*\coxsec')
    &=\beta\coxisom_{\galoiselement\groupelement^+,\galoiselement\divisor^+}
    (\galoiselement*(\coxsec\coxsec'))
  \end{align*}
  for all $\coxsec\in\coxsheaf(\Xcbar)_{\groupelement}$ and
  $\coxsec'\in\coxsheaf(\Xcbar)_{\groupelement^-}$. The constant
  $\compconstant_{\galoiselement,\groupelement,\divisor}$ does not depend on
  the choice of $(\groupelement^+,\divisor^+)$ and
  $(\groupelement^-,\divisor^-)$.

  For every homogeneous section $\coxsec$ of $\coxsheaf$ and
  $\galoiselement\in\galois$, let
  $\galoiselement*\coxsec \coloneqq
  \compconstant_{\galoiselement,\groupelement,\divisor}
  (\coxisom_{\galoiselement\groupelement,\galoiselement\divisor}^{-1}\circ
  \galoiselement\circ\coxisom_{\groupelement,\divisor})(\coxsec)$ for all
  $(\groupelement,\divisor)\in\gradgroupdiv$ such that $\coxsec$ has degree
  $\groupelement$. This definition does not depend on the choice of the
  representative $\divisor$ for $\typecox(\groupelement)$ and induces a
  {\compatible} action of $\galois$ on $\coxsheaf$ in the sense of Definition
  \ref{def:galois_action}.
\end{proof}

\begin{cor}\label{cor:cox_ring_cox_sheaf_nonclosed}
  If $\coxsheaf$ is a Cox sheaf of $\Xc$ of type $\typecox$, then
  $\coxsheaf(\Xc)$ is a Cox ring of $\Xc$ of type $\typecox$.  Moreover, if
  $\gradgroup=\gradgroupeff$, the covariant functor
  \begin{align*}
    \big\{\text{Cox sheaves of $\Xc$ of type $\typecox$}\big\}
    & \longrightarrow \big\{\text{Cox rings of $\Xc$ of type $\typecox$}\big\}\\
    \coxsheaf\quad& \longmapsto\quad \coxsheaf(\Xc)
  \end{align*}
  is an equivalence of categories.
\end{cor}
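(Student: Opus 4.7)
The plan is to use Galois descent (Proposition \ref{prop:galois_descent_cox_rings_sheaves}) to reduce both assertions to the separably closed case, where the corresponding statements were already established by Propositions \ref{prop:cox_ring_cox_sheaf}, \ref{prop:morphism_cox_ring_cox_sheaf}, and (for transporting the Galois action) \ref{prop:equiv_cox_ring_sheaf}.

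For the first statement, let $\coxsheaf$ be a Cox sheaf of $\Xc$ of type $\typecox$. Since $\coxsheafbar$ is locally a direct sum of invertible sheaves, it is quasi-coherent, so flat base change provides a canonical $\galois$-equivariant identification $\coxsheafbar(\Xcbar)\cong\coxsheaf(\Xc)\otimes_\field\fieldbar$. Restricting the family of sheaf isomorphisms defining the Cox sheaf structure on $\coxsheafbar$ to global sections yields, via Proposition \ref{prop:equivalent_definition_cox_ring_div}, a Cox ring structure of type $\typecox$ on $\coxsheafbar(\Xcbar)$; naturality of the $\galois$-action on $\coxsheafbar$ translates directly (in view of Remark \ref{rem:equivalent_galois_action}) into naturality of the induced $\galois$-action on $\coxsheafbar(\Xcbar)$. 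Applying Proposition \ref{prop:galois_descent_cox_rings_sheaves} then gives that $\coxsheaf(\Xc)$ is a Cox ring of $\Xc$ of type $\typecox$, with functoriality inherited from the sheaf-theoretic side.

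Under the assumption $\gradgroup=\gradgroupeff$, I would obtain the equivalence by factoring the functor as a composition of two equivalences. Over $\fieldbar$, the functor $\coxsheafbar\mapsto\coxsheafbar(\Xcbar)$ between Cox sheaves and Cox rings of $\Xcbar$ of type $\typecox$ is essentially surjective by Proposition \ref{prop:cox_ring_cox_sheaf} and fully faithful by Proposition \ref{prop:morphism_cox_ring_cox_sheaf}. I would then promote this to an equivalence between the corresponding categories of $\galois$-equivariant objects: essential surjectivity in this refined setting is Proposition \ref{prop:equiv_cox_ring_sheaf}, while full faithfulness requires showing that the unique sheaf morphism associated by Proposition \ref{prop:morphism_cox_ring_cox_sheaf} to a $\galois$-equivariant ring morphism is itself $\galois$-equivariant, which follows from that same uniqueness applied to the two sheaf morphisms $\widetilde\morphism$ and $\galoiselement\circ\widetilde\morphism\circ\galoiselement^{-1}$ inducing the same morphism on global sections. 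Composing the resulting equivalence with Proposition \ref{prop:galois_descent_cox_rings_sheaves} yields the desired equivalence over $\field$.

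The main obstacle will be the bookkeeping of the Galois action in the second step — specifically, verifying that the lifts produced by Propositions \ref{prop:equiv_cox_ring_sheaf} and \ref{prop:morphism_cox_ring_cox_sheaf} remain compatible with $\galois$ (including the continuity of the lifted sheaf-level action). The rest should reduce to a routine unwinding of definitions together with flat base change.
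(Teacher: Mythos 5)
Your proposal is correct and follows essentially the same route as the paper: the first assertion is obtained by passing to the $\galois$-equivariant Cox ring $\coxsheafbar(\Xcbar)$ and descending via Proposition \ref{prop:galois_descent_cox_rings_sheaves}, and the equivalence is established exactly as in the paper, with essential surjectivity from Propositions \ref{prop:cox_ring_cox_sheaf}, \ref{prop:equiv_cox_ring_sheaf} and \ref{prop:galois_descent_cox_rings_sheaves}, and full faithfulness from the uniqueness in Proposition \ref{prop:morphism_cox_ring_cox_sheaf} applied to $\widetilde\morphism$ versus its Galois conjugates. Your packaging of the middle step as an equivalence of categories of $\galois$-equivariant objects is only an organizational variant of the paper's argument, not a different method.
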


\begin{proof}
  Let $\coxsheaf$ be a Cox sheaf of $\Xc$ of type $\typecox$. By
  Definition \ref{def:cox_sheaf_nonclosed}, $\coxsheafbar$ is a
  $\galois$-equivariant Cox sheaf of $\Xcbar$. Hence
  $\coxsheafbar(\Xcbar)$ is a Cox ring of $\Xcbar$ of type $\typecox$,
  and the induced $\galois$-action turns it into a
  $\galois$-equivariant Cox ring of type $\typecox$. Then the subring
  of $\galois$-invariant elements
  $\coxsheaf(\Xc)=\coxsheafbar(\Xcbar)^\galois$ is a Cox ring of $\Xc$
  of type $\typecox$ by Proposition
  \ref{prop:galois_descent_cox_rings_sheaves}.
    
  Assume now that $\gradgroup=\gradgroupeff$. The functor is
  essentially surjective by Propositions
  \ref{prop:cox_ring_cox_sheaf}, \ref{prop:equiv_cox_ring_sheaf} and
  \ref{prop:galois_descent_cox_rings_sheaves}. We prove that it is
  fully faithful.  Let $\coxsheaf$ and $\coxsheaf'$ be Cox sheaves of
  $\Xc$ of type $\typecox$. A morphism $\morphism\colon \coxsheaf(\Xc) \to
  \coxsheaf'(\Xc)$ of Cox rings of $\Xc$ of type $\typecox$ induces a
  morphism of $\galois$-equivariant Cox rings
  $\coxsheaf(\Xc)_{\fieldbar} \to \coxsheaf'(\Xc)_{\fieldbar}$ that
  extends to a unique isomorphism of Cox sheaves
  $\overline\morphism\colon \coxsheafbar\to\coxsheafbar'$ by Propositions
  \ref{prop:morphism_cox_ring_cox_sheaf} and
  \ref{prop:morphism_cox_sheaves}. Since $\galoiselement^{-1} \circ
  \overline\morphism^{-1} \circ \galoiselement \circ
  \overline\morphism$ is an automorphism on $\coxsheafbar$ that
  induces the identity on $\coxsheafbar(\Xcbar)$ for every
  $\galoiselement \in \galois$, by the uniqueness statement of
  Proposition~\ref{prop:morphism_cox_ring_cox_sheaf},
  $\galoiselement^{-1} \circ \overline\morphism^{-1} \circ
  \galoiselement \circ \overline\morphism$ is the identity as well,
  which means that $\overline\morphism$ is $\galois$-equivariant, and
  hence restricts to an isomorphism $\coxsheaf\cong\coxsheaf'$
  inducing $\morphism$.
\end{proof}

We apply descent theory (see \cite[\S III.1]{MR1867431} or \cite[\S
16]{Milne}) to classify Cox rings and Cox sheaves of $\Xc$ up to isomorphism.
Recall that by Corollary~\ref{cor:existence_isom_cox_sheaves}, every two Cox
sheaves of $\Xc$ of type $\typecox$ become isomorphic after the field
extension $\field\subseteq\fieldbar$. Moreover, if $\morphism$ is an
isomorphism of Cox sheaves of type $\typecox$ between two
$\galois$-equivariant Cox sheaves of $\Xcbar$ of type $\typecox$, then
${^\galoiselement}\morphism\coloneqq \galoiselement\circ\morphism\circ\galoiselement^{-1}$
is another isomorphism for all $\galoiselement\in\galois$, and sending
$\galoiselement$ to the automorphism
$\morphism^{-1} \circ {^\galoiselement}\morphism$ defines a map
$\galois \to \gradgroupdual(\fieldbar)$ by Proposition
\ref{prop:cox_sheaf_aut}, which turns out to be a $1$-cocycle.

\begin{prop}\label{prop:cox_sheaf_classification}
  Assume that $\Xc$ has a Cox sheaf $\coxsheaf$ of type $\typecox$.  Sending a
  Cox sheaf $\coxsheaf'$ of $\Xc$ of type $\typecox$ with a Cox sheaf
  isomorphism $\morphism\colon  \coxsheafbar \to \coxsheafbar'$ to the class of the
  cocycle
  \begin{equation*}
    \galois \to \Aut(\coxsheafbar) \cong \gradgroupdual(\fieldbar),
    \quad \galoiselement\mapsto \morphism^{-1} \circ {^\galoiselement}\morphism
  \end{equation*}
  defines an bijective map  from the set of isomorphism classes of Cox
  sheaves of $\Xc$ of type $\typecox$ to $H^1_{\text{\it{\'et}}}(\field, \gradgroupdual)$. 
\end{prop}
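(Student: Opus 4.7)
The plan is to invoke standard Galois descent applied to the unique isomorphism class of Cox sheaves of $\Xcbar$ of type $\typecox$ (Corollary \ref{cor:existence_isom_cox_sheaves}), using three inputs: automorphisms of a Cox sheaf of $\Xcbar$ of type $\typecox$ form the group $\gradgroupdual(\fieldbar)$ (Proposition \ref{prop:cox_sheaf_aut}); any morphism of Cox sheaves of $\Xcbar$ of type $\typecox$ is automatically an isomorphism (Proposition \ref{prop:morphism_cox_sheaves}); and Cox sheaves of $\Xc$ of type $\typecox$ are equivalent to $\galois$-equivariant Cox sheaves of $\Xcbar$ of the same type (Proposition \ref{prop:galois_descent_cox_rings_sheaves}).

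First I would verify well-definedness. Given any isomorphism $\morphism: \coxsheafbar \to \coxsheafbar'$ of Cox sheaves of type $\typecox$, the conjugate ${^\galoiselement}\morphism := \galoiselement \circ \morphism \circ \galoiselement^{-1}$ is again such an isomorphism for each $\galoiselement \in \galois$, so $\morphism^{-1} \circ {^\galoiselement}\morphism$ is an automorphism of $\coxsheafbar$, identified under Proposition \ref{prop:cox_sheaf_aut} with an element of $\gradgroupdual(\fieldbar)$. A direct computation gives the cocycle relation $\galoiselement \galoiselement' \mapsto \sigma_\morphism(\galoiselement) \cdot \galoiselement(\sigma_\morphism(\galoiselement'))$, the point being that conjugation by $\galoiselement$ on $\Aut(\coxsheafbar)$ corresponds to the natural $\galois$-action on $\gradgroupdual(\fieldbar) = \Hom(\gradgroup, \fieldbar^\times)$, because the $\galois$-action on $\coxsheafbar$ permutes the homogeneous pieces according to the $\galois$-action on $\gradgroup$. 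Continuity of the cocycle follows from the discreteness of $\gradgroupdual(\fieldbar)$ as a $\galois$-module, which is guaranteed by the finite generation of $\gradgroup$ together with the continuity of the action on it. Replacing $\morphism$ by $\morphism \circ \automorphism$ with $\automorphism \in \gradgroupdual(\fieldbar)$ shifts the cocycle by the coboundary $\galoiselement \mapsto \automorphism^{-1}\galoiselement(\automorphism)$, and replacing $\coxsheaf'$ by an isomorphic Cox sheaf of $\Xc$ has the same effect via Proposition \ref{prop:galois_descent_cox_rings_sheaves}.

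For injectivity, suppose Cox sheaves $\coxsheaf_1'$ and $\coxsheaf_2'$ of $\Xc$ give cohomologous cocycles: that is, there exists $\automorphism \in \gradgroupdual(\fieldbar)$ with $\sigma_{\morphism_1}(\galoiselement) = \automorphism^{-1} \cdot \sigma_{\morphism_2}(\galoiselement) \cdot \galoiselement(\automorphism)$. Tracking the semilinear $\galois$-actions on both sides shows that $\morphism_2 \circ \automorphism \circ \morphism_1^{-1}: \coxsheafbar_1' \to \coxsheafbar_2'$ is $\galois$-equivariant, so by Proposition \ref{prop:galois_descent_cox_rings_sheaves} it descends to an isomorphism $\coxsheaf_1' \cong \coxsheaf_2'$ over $\field$.

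For surjectivity, given a continuous $1$-cocycle $\sigma: \galois \to \gradgroupdual(\fieldbar)$, I would twist the given $\galois$-action on $\coxsheafbar$ to $\galoiselement *_\sigma \coxsec := \sigma(\galoiselement)(\galoiselement * \coxsec)$. The cocycle identity makes this a group action and continuity is preserved, so the remaining and main obstacle is to verify that $*_\sigma$ is \compatible{} in the sense of Definition \ref{def:galois_action}. This hinges on Proposition \ref{prop:cox_sheaf_aut}: each $\sigma(\galoiselement)$ acts on $\coxsheafbar_\groupelement$ by multiplication by the scalar $\sigma(\galoiselement)(\groupelement) \in \fieldbar^\times$, so $\galoiselement^{-1} \circ \coxisom_{\galoiselement\groupelement, \galoiselement\divisor} \circ (\galoiselement *_\sigma \,\cdot\,) \circ \coxisom_{\groupelement, \divisor}^{-1}$ differs from its untwisted counterpart by multiplication by that scalar and is therefore still an automorphism of $\OO_{\Xcbar}(\divisor)$. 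Proposition \ref{prop:galois_descent_cox_rings_sheaves} then produces a Cox sheaf $\coxsheaf'$ of $\Xc$ of type $\typecox$ whose cocycle, computed with $\morphism = \identity_{\coxsheafbar}$, is exactly $\sigma$.
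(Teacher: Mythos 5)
Your proposal is correct and follows essentially the same route as the paper's proof: the same cocycle is checked to be well defined via Proposition \ref{prop:cox_sheaf_aut}, injectivity is obtained by upgrading a coboundary relation to a $\galois$-equivariant isomorphism that descends by Proposition \ref{prop:galois_descent_cox_rings_sheaves}, and surjectivity is obtained by twisting the $\galois$-action on $\coxsheafbar$ by the cocycle and descending. Your verification that the twisted action remains {\compatible} is in fact spelled out slightly more explicitly than in the paper, which only records continuity via the direct-limit description of $H^1$; no substantive difference.
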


\begin{proof}
  The class of the cocycle
  $\galoiselement \mapsto \morphism^{-1} \circ {^\galoiselement}\morphism$
  does not depend on the choice of $\morphism$. Moreover, if $\coxsheaf',
  \coxsheaf''$ are two Cox sheaves of $\Xc$ of type $\typecox$, and
  $\morphism'\colon \coxsheafbar \to \coxsheafbar'$,
  $\morphism''\colon \coxsheafbar \to \coxsheafbar''$ are two Cox sheaf
  isomorphisms, the associated cocycles have the same class in
  $H^1_{\text{\it{\'et}}}(\field, \gradgroupdual)$ if and only if there is an
  isomorphism of Cox sheaves $\coxsheaf'\to\coxsheaf''$. Hence the map in the
  statement is injective.

  For surjectivity, note that a cocycle
  $\galoiscocycle \colon  \galois \to \gradgroupdual(\fieldbar)$ defines a \emph{twisted
  action}
  \begin{equation*}
    \galois \times \coxsheafbar \to \coxsheafbar, \qquad
    (\galoiselement, \coxsec) \mapsto
    \galoiscocycle_\galoiselement(\galoiselement(\coxsec))\coloneqq 
    (\morphism_{\galoiscocycle_\galoiselement} \circ
    (\identity_{\coxsheaf} \otimes \galoiselement))(\coxsec),
  \end{equation*}
  where $\morphism_{\galoiscocycle_\galoiselement}$ is the automorphism
  defined in Proposition \ref{prop:cox_sheaf_aut}.  Since
  \begin{equation*}
    H^1(\galois,\gradgroupdual(\fieldbar))
    =\varinjlim_{\fieldextension/\field}
    H^1(\gal(\fieldextension/\field),\gradgroupdual(\fieldextension)),
  \end{equation*}
  where the direct limit is taken over the finite Galois extensions
  $\fieldextension$ of $\field$ inside $\fieldbar$, the twisted action defined
  by $\galoiscocycle$ is continuous, and hence it is a {\compatible}
  $\galois$-action on $\coxsheafbar$ according to Definition
  \ref{def:galois_action}.  Let $\coxsheaf^\galoiscocycle$ be the sheaf of
  invariants of this action.  By Proposition
  \ref{prop:galois_descent_cox_rings_sheaves}, $\coxsheaf^\galoiscocycle$ is a
  Cox sheaf of $\Xc$ of type $\typecox$ such that
  $\coxsheafbar\cong\coxsheaf^\galoiscocycle\otimes_\field\fieldbar$. The
  cocycle associated with this isomorphism is
  $(\identity_{\coxsheafbar}\circ\morphism_{\galoiscocycle_\galoiselement}\circ
  \galoiselement\circ\identity_{\coxsheafbar}\circ
  \galoiselement^{-1})_{\galoiselement\in \galois}=\galoiscocycle$.
\end{proof}

As a consequence of Corollary \ref{cor:cox_ring_cox_sheaf_nonclosed} and
Proposition \ref{prop:cox_sheaf_classification}, the map in Proposition
\ref{prop:cox_sheaf_classification} defines a bijection between the set of
isomorphism classes of Cox rings of $\Xc$ of type $\typecox$ and
$H^1_{\text{\it{\'et}}}(\field, \gradgroupeffdual)$.

The inverse map to the bijection in Proposition
\ref{prop:cox_sheaf_classification} is obtained by twisting a Cox sheaf of
$\Xc$ of type $\typecox$ by cocycles. Therefore, we introduce the notion of
twisted Cox sheaf.

\begin{defin}\label{def:twisted_cox_sheaf}
  For every Cox sheaf $\coxsheaf$ of $\Xc$ of type $\typecox$ and every
  cocycle $\galoiscocycle\colon  \galois \to \gradgroupdual(\fieldbar)$, we denote
  by $\twistedcoxsheaf{\galoiscocycle}$ the \emph{twisted Cox sheaf}
  constructed in the proof of Proposition \ref{prop:cox_sheaf_classification}.
\end{defin}

\begin{prop}\label{prop:cox_sheaves_torsors_nonclosed}
  If $\coxsheaf$ is a Cox sheaf of $\Xc$ of type $\typecox$, then
  $\spec_{\Xc}\coxsheaf$ is an $\Xc$-torsor of type
  $\typecox$. Moreover, for each cocycle $\galoiscocycle \colon  \galois \to
  \gradgroupdual(\fieldbar)$, the $\Xc$-torsor
  $\spec_{\Xc}\twistedcoxsheaf{\galoiscocycle}$ has class
  $[\spec_{\Xc}\coxsheaf]-[\galoiscocycle]$ in
  $H^1_{\text{\it{\'et}}}(\Xc, \gradgroupdualX)$.
\end{prop}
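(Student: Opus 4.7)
The plan is to reduce the first assertion to the closed-field case (Proposition~\ref{prop:cox_sheaves_torsors}) via Galois descent, and then to identify the class of the twisted torsor using the exact sequence~(\ref{eq:exact_sequence_type}) together with a Čech cocycle computation.

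For the first assertion, formation of relative spectra commutes with base change, so $(\spec_\Xc \coxsheaf)_{\fieldbar} \cong \spec_{\Xcbar} \coxsheafbar$. By Definition~\ref{def:cox_sheaf_nonclosed}, $\coxsheafbar$ is a $\galois$-equivariant Cox sheaf of $\Xcbar$ of type $\typecox$, and Proposition~\ref{prop:cox_sheaves_torsors} identifies its relative spectrum as an $\Xcbar$-torsor under $\gradgroupdual_{\fieldbar}$ of type $\typecox$. The compatibility condition in Definition~\ref{def:galois_action} ensures that the $\galois$-action preserves the torsor structure (with the natural $\galois$-action on $\gradgroupdual_{\fieldbar}$), so faithfully flat descent exhibits $\spec_\Xc \coxsheaf \to \Xc$ as an $\Xc$-torsor under $\gradgroupdual$. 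Its type remains $\typecox$ because the type map is defined via pushforwards along characters, which commute with base change.

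For the second assertion, the key observation is that $\twistedcoxsheaf{\galoiscocycle}\otimes_\field\fieldbar$ has the same underlying $\gradgroup$-graded $\OO_{\Xcbar}$-algebra as $\coxsheafbar$; only the $\galois$-action is altered. Hence $\spec_\Xc \twistedcoxsheaf{\galoiscocycle}$ is also an $\Xc$-torsor of type $\typecox$ by the first part, and by the exact sequence~(\ref{eq:exact_sequence_type}) the difference $[\spec_\Xc \twistedcoxsheaf{\galoiscocycle}] - [\spec_\Xc \coxsheaf]$ lies in the image of $\structureX^{*}:H^1_{\text{\it{\'et}}}(\field,\gradgroupdual)\to H^1_{\text{\it{\'et}}}(\Xc,\gradgroupdualX)$.

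To identify this difference with $-\structureX^{*}[\galoiscocycle]$, I would perform an explicit cocycle computation. Fix an open cover $\{\opensubset_i\}$ of $\Xc$ trivializing the relevant line bundles, so that $\coxsheafbar|_{\opensubset_{i,\fieldbar}}$ admits trivializations as in the proof of Proposition~\ref{prop:cox_sheaves_torsors}; these give Čech data $(\cocycleY_{i,j})$ in $\gradgroupdual_{\fieldbar}$ representing $[\spec_{\Xcbar}\coxsheafbar]$, which together with the compatible $\galois$-action on $\coxsheafbar$ assemble into a cocycle representing $[\spec_\Xc \coxsheaf]$ on the étale site of $\Xc$. Replacing $\coxsheaf$ by $\twistedcoxsheaf{\galoiscocycle}$ leaves $(\cocycleY_{i,j})$ unchanged but modifies the Galois-descent part of the cocycle by the pullback of $\galoiscocycle$, yielding the claimed formula. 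The main obstacle is sign bookkeeping in this last step: one must verify that the automorphism $\morphism_{\galoiscocycle_\galoiselement}$ of Proposition~\ref{prop:cox_sheaf_aut} corresponds under $\spec_\Xc$ to the correct translation in $\gradgroupdual(\fieldbar)$, and that the resulting modification of the descent cocycle is exactly $-\structureX^{*}[\galoiscocycle]$ rather than $+\structureX^{*}[\galoiscocycle]$.
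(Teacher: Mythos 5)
Your first assertion is proved exactly as the paper does it: base change to $\fieldbar$, apply Proposition~\ref{prop:cox_sheaves_torsors}, and descend; nothing to add there.

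For the second assertion, however, your argument stops short of proving anything. You correctly observe that $\twistedcoxsheaf{\galoiscocycle}\otimes_\field\fieldbar$ has the same underlying graded $\OO_{\Xcbar}$-algebra as $\coxsheafbar$ and that the difference of classes therefore lies in the image of $\structureX^*:H^1_{\text{\it{\'et}}}(\field,\gradgroupdual)\to H^1_{\text{\it{\'et}}}(\Xc,\gradgroupdualX)$, but the entire content of the ``moreover'' clause is the identification of that difference with $-[\galoiscocycle]$, including the sign --- and you explicitly defer this (``one must verify\dots''). A proof cannot end by naming its own missing step. The decisive point, which your \v{C}ech bookkeeping would eventually have to produce and which the paper isolates cleanly, is the contravariance between automorphisms of the sheaf and automorphisms of its relative spectrum: the twisted $\galois$-action on sections is $(\galoiselement,\coxsec)\mapsto\galoiscocycle_\galoiselement(\galoiselement(\coxsec))$, and since $\spec_{\Xcbar}$ is contravariant this induces the action $(\galoiselement,x)\mapsto\galoiscocycle_\galoiselement^{-1}(\galoiselement(x))$ on points of $\spec_{\Xcbar}\coxsheafbar$. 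That inversion is where the minus sign comes from. Once you have it, no \v{C}ech computation on an open cover of $\Xc$ is needed at all: the standard formula for twisting a torsor by a cocycle valued in the structure group (\cite[Example~2, p.~21]{MR1845760}) gives $[\spec_{\Xc}\twistedcoxsheaf{\galoiscocycle}]=[\spec_{\Xc}\coxsheaf]-[\galoiscocycle]$ directly. So: supply the contravariance observation, cite (or prove) the twisting formula, and your outline becomes a proof; as written, the sign you flag as ``the main obstacle'' is precisely the statement to be proved.
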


\begin{proof}
  Let $\coxsheaf$ be a Cox sheaf of $\Xc$ of type $\typecox$. Since
  $\coxsheafbar$ is a $\galois$-equivariant Cox sheaf of $\Xcbar$, the
  action of $\spec\fieldbar[\gradgroup]$ on
  $\spec_{\Xcbar}\coxsheafbar$ induced by the $\gradgroup$-grading on
  $\coxsheafbar$ descends to an action of
  $\gradgroupdual=\spec\fieldbar[\gradgroup]^{\galois}$ on
  $\spec_{\Xc}\coxsheaf$, and the canonical morphism
  $\spec_{\Xc}\coxsheaf\to\Xc$ is an $\Xc$-torsor of type $\typecox$
  by Proposition \ref{prop:cox_sheaves_torsors} and
  {\it{fpqc}}-descent.

  Let $\galoiscocycle \colon  \galois \to\gradgroupdual(\fieldbar)$ be a cocycle. By
  Proposition \ref{prop:cox_sheaf_classification}, the twisted Cox sheaf
  $\twistedcoxsheaf{\galoiscocycle}$ corresponds to the $\galois$-equivariant
  Cox sheaf $\coxsheafbar$ of $\Xcbar$ with the twisted action of $\galois$
  given by
  $(\galoiselement, \coxsec) \mapsto
  \galoiscocycle_\galoiselement(\galoiselement(\coxsec))$ under the bijection
  of Proposition \ref{prop:galois_descent_cox_rings_sheaves}.  Thus,
  $\spec_{\Xc}\twistedcoxsheaf{\galoiscocycle}$ is obtained by Galois descent
  from $\spec_{\Xcbar}\coxsheafbar$ with the twisted $\galois$-action
  $(\galoiselement,x)\mapsto
  \galoiscocycle_\galoiselement^{-1}(\galoiselement(x))$. Therefore,
  $[\spec_{\Xc}\twistedcoxsheaf{\galoiscocycle}]
  =[\spec_{\Xc}\coxsheaf]-[\galoiscocycle]$ in
  $H^1_{\text{\it{\'et}}}(\Xc, \gradgroupdualX)$; see \cite[Example 2,
  p.~21]{MR1845760}.
\end{proof}

\begin{prop}\label{prop:torsors_cox_sheaves_nonclosed}
  If $\torsormor\colon  \Yc \to \Xc$ is an $\Xc$-torsor of type $\typecox$, then
  $\torsormor_*\OO_\Yc$ is a Cox sheaf of $\Xc$ of type $\typecox$,
  $\OO_\Yc(\Yc)$ is a Cox ring of $\Xc$ of type $\typecox$, and
  $\spec_\Xc\pi_*\OO_\Yc\cong \Yc$.
\end{prop}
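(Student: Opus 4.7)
The plan is to reduce everything to the separably closed case, which was handled in Proposition~\ref{prop:cox_sheaves_torsors}, and then descend using Proposition~\ref{prop:galois_descent_cox_rings_sheaves}.

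First, I would note that any $\Xc$-torsor $\torsormor\colon\Yc\to\Xc$ under the quasitorus $\gradgroupdual$ is affine (as $\gradgroupdual$ is affine), so $\torsormor$ is an affine morphism. Hence flat base change yields a canonical isomorphism $(\torsormor_*\OO_\Yc)\otimes_\field\fieldbar\cong\torsormorbar_*\OO_{\Ycbar}$, and the type is preserved under base change. By Proposition~\ref{prop:cox_sheaves_torsors}, the right-hand side is a Cox sheaf of $\Xcbar$ of type $\typecox$.

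The crux is to verify that the natural $\galois$-action on $\torsormorbar_*\OO_{\Ycbar}$ induced by Galois descent of $\Yc$ is \emph{natural} in the sense of Definition~\ref{def:galois_action}. The $\gradgroup$-grading on $\torsormorbar_*\OO_{\Ycbar}$ comes from the $\gradgroupdual$-action on $\Yc$, which is defined over $\field$, so it is $\galois$-equivariant, sending $(\torsormorbar_*\OO_{\Ycbar})_\groupelement$ isomorphically onto $(\torsormorbar_*\OO_{\Ycbar})_{\galoiselement\groupelement}$ for every $\galoiselement\in\galois$. To check the condition on the isomorphisms $\coxisom_{\groupelement,\divisor}$, I would revisit the proof of Proposition~\ref{prop:cox_sheaves_torsors}: choosing a trivializing cover $\{U_i\}_i$ of $\Xcbar$ and a representing cocycle $(\cocycleY_{i,j})$, the isomorphism $\coxisom_{\groupelement,\divisor_\groupelement}$ is determined (up to $\fieldbar^\times$) by the sections $f_{i,\groupelement}$ defining $\divisor_\groupelement$, which transform naturally under $\galois$: applying $\galoiselement$ to the cover, the cocycle, and the divisors yields another set of such data giving the Cox sheaf structure on $\torsormorbar_*\OO_{\Ycbar}$ after twisting the grading by $\galoiselement$, and the two families of isomorphisms differ only by elements of $\fieldbar^\times$, as required by Definition~\ref{def:galois_action}. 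This is the step I expect to be the main obstacle, since one has to keep careful track of how the trivializing data transform; but because all the relevant choices are unique up to $\fieldbar^\times$ by Lemma~\ref{lem:isom_inv_sheaves}, this should reduce to a bookkeeping check.

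Granting compatibility, Proposition~\ref{prop:galois_descent_cox_rings_sheaves} yields that $\torsormor_*\OO_\Yc$ is a Cox sheaf of $\Xc$ of type $\typecox$. Taking global sections and using Corollary~\ref{cor:cox_ring_cox_sheaf_nonclosed} (or more precisely the part of it which does not require $\gradgroup=\gradgroupeff$, namely that the ring of global sections of a Cox sheaf is a Cox ring) gives that $\OO_\Yc(\Yc)=(\torsormor_*\OO_\Yc)(\Xc)$ is a Cox ring of $\Xc$ of type $\typecox$. Finally, the isomorphism $\spec_\Xc\torsormor_*\OO_\Yc\cong\Yc$ is the standard equivalence between affine $\Xc$-schemes and quasicoherent $\OO_\Xc$-algebras applied to the affine morphism $\torsormor$.
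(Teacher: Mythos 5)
Your proposal is correct and follows essentially the same route as the paper: base change to $\fieldbar$, invoke Proposition~\ref{prop:cox_sheaves_torsors}, check that the induced $\galois$-action is {\compatible} in the sense of Definition~\ref{def:galois_action}, descend via Proposition~\ref{prop:galois_descent_cox_rings_sheaves} and Corollary~\ref{cor:cox_ring_cox_sheaf_nonclosed}, and use affineness of $\torsormor$ for the final isomorphism. If anything, you give more detail than the paper on the key compatibility step (the paper simply asserts that the action is continuous and compatible with the $\galois$-action on $\gradgroup$, while you trace the trivializing data and appeal to Lemma~\ref{lem:isom_inv_sheaves}), and you argue affineness of $\torsormor$ directly from affineness of the quasitorus where the paper cites a reference.
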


\begin{proof}
  Let $\torsormor\colon  \Yc \to \Xc$ be an $\Xc$-torsor of type
  $\typecox$. Let $\torsormorbar\colon  \Ycbar \to \Xcbar$ be the induced
  $\Xcbar$-torsor of type $\typecox$. Then
  $\torsormorbar_*\OO_{\Ycbar}$ is a Cox sheaf of $\Xcbar$ of type
  $\typecox$ by Proposition \ref{prop:cox_sheaves_torsors}.  The
  induced $\galois$-action on $\torsormorbar_*\OO_{\Ycbar}$ is a
  natural $\galois$-action in the sense of Definition
  \ref{def:galois_action}, as it is continuous and compatible with the
  $\galois$-action on $\gradgroup$. Hence $\torsormor_*\OO_\Yc$ is a
  Cox sheaf of $\Xc$ of type $\typecox$ according to
  Definition~\ref{def:cox_sheaf_nonclosed}, and $\OO_\Yc(\Yc) =
  (\torsormor_*\OO_\Yc)(\Xc)$ is a Cox ring of $\Xc$ of type
  $\typecox$ by Corollary \ref{cor:cox_ring_cox_sheaf_nonclosed}.
  Moreover, $\torsormor$ is affine by \cite[Proposition
  0.7]{MR1304906}. Hence $\spec_\Xc\torsormor_*\OO_\Yc\cong \Yc$.
\end{proof}

\begin{cor}\label{cor:bijection_torsors_cox_sheaves}
  The contravariant functor
  \begin{align*}
    \big\{\text{Cox sheaves of $\Xc$ of type $\typecox$}\big\}
    & \longrightarrow \big\{\text{$\Xc$-torsors of type $\typecox$}\big\}\\
    \coxsheaf\quad
    & \longmapsto\quad \spec_{\Xc}\coxsheaf
  \end{align*}
  is an anti-equivalence of categories, with inverse functor
  \begin{align*}
    \big\{\text{$\Xc$-torsors of type $\typecox$}\big\}
    & \longrightarrow \big\{\text{Cox sheaves of $\Xc$ of type $\typecox$}\big\}\\
    \torsormor\colon \Yc\to \Xc\quad
    & \longmapsto\quad \torsormor_*\OO_\Yc.
  \end{align*}
\end{cor}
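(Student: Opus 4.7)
The plan is to assemble the corollary from the two preceding propositions, combined with the standard adjunction between $\spec_\Xc$ and the direct image functor for affine morphisms. Essentially, every required object-level correspondence is already proved; only the compatibility on morphisms needs to be checked.

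First, Proposition \ref{prop:cox_sheaves_torsors_nonclosed} shows that $\coxsheaf \mapsto \spec_\Xc \coxsheaf$ takes Cox sheaves of type $\typecox$ to $\Xc$-torsors of type $\typecox$, and Proposition \ref{prop:torsors_cox_sheaves_nonclosed} shows that $\torsormor \mapsto \torsormor_*\OO_\Yc$ takes $\Xc$-torsors of type $\typecox$ to Cox sheaves of type $\typecox$. Both functors are thus well-defined on objects. Proposition \ref{prop:torsors_cox_sheaves_nonclosed} further gives the natural isomorphism $\spec_\Xc \torsormor_*\OO_\Yc \cong \Yc$. For the reverse composition, I would note that $\spec_\Xc \coxsheaf \to \Xc$ is affine, so the standard equivalence between quasicoherent $\OO_\Xc$-algebras and affine $\Xc$-schemes yields a natural isomorphism $(\spec_\Xc\coxsheaf \to \Xc)_*\OO_{\spec_\Xc\coxsheaf} \cong \coxsheaf$ as $\OO_\Xc$-algebras; this isomorphism is automatically $\gradgroup$-graded because the $\gradgroupdual_\Xc$-action encoding the grading on $\coxsheaf$ is the same as the one determining the torsor structure.

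Next, for fully faithfulness, I would use precisely this Spec/direct image adjunction. A morphism of Cox sheaves $\morphism: \coxsheaf \to \coxsheaf'$ is by Definition~\ref{def:cox_sheaf_nonclosed} a morphism of $\OO_\Xc$-algebras whose base change to $\fieldbar$ is $\gradgroup$-graded and $\galois$-equivariant. It therefore induces a morphism $\spec_\Xc \morphism : \spec_\Xc \coxsheaf' \to \spec_\Xc \coxsheaf$ of affine $\Xc$-schemes that is equivariant under $\gradgroupdual = \spec \fieldbar[\gradgroup]^\galois$ by \textit{fpqc} descent, hence a morphism of $\Xc$-torsors of type $\typecox$. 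Conversely, any morphism of torsors $\phi: \Yc' \to \Yc$ over $\Xc$ pushes forward to a morphism $\phi^\sharp: \torsormor_*\OO_{\Yc} \to \torsormor'_*\OO_{\Yc'}$ of $\OO_\Xc$-algebras; equivariance of $\phi$ under $\gradgroupdual$ translates into compatibility with the $\gradgroup$-grading after base change, and the morphism is automatically $\galois$-equivariant on $\Ycbar$, so $\phi^\sharp$ is a morphism of Cox sheaves. The two assignments $\morphism \leftrightarrow \spec_\Xc \morphism$ are mutual inverses by the adjunction for affine morphisms.

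The only mildly nontrivial step is verifying that the extra structure on both sides matches: namely, that the $\gradgroup$-grading on $\coxsheafbar$ and the Galois action encoding $\galois$-equivariance of Cox sheaves translate precisely into the $\gradgroupdual$-torsor structure and $\Xc$-torsor morphisms. This, however, is implicit in the construction of the torsor from the Cox sheaf in Proposition~\ref{prop:cox_sheaves_torsors_nonclosed} and of the Cox sheaf from the torsor in Proposition~\ref{prop:torsors_cox_sheaves_nonclosed}, so no genuinely new argument is needed. Therefore the two functors are quasi-inverse anti-equivalences, as claimed.
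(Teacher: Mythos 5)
Your proposal is correct and follows essentially the same route as the paper: the paper's proof likewise cites Propositions~\ref{prop:cox_sheaves_torsors_nonclosed} and \ref{prop:torsors_cox_sheaves_nonclosed} for well-definedness and essential surjectivity and then invokes full faithfulness of $\spec_{\Xc}$ on quasi-coherent $\OO_\Xc$-algebras, which is exactly the affine-morphism adjunction you spell out. You simply make explicit the details (the identification $\torsormor_*\OO_{\spec_\Xc\coxsheaf}\cong\coxsheaf$ and the matching of gradings, Galois actions, and equivariance on morphisms) that the paper leaves implicit.
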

\begin{proof}
  By Propositions~\ref{prop:cox_sheaves_torsors_nonclosed} and
  \ref{prop:torsors_cox_sheaves_nonclosed}, both functors are well-defined and
  essentially surjective. Moreover, the functor $\spec_{\Xc}$ is fully
  faithful.
\end{proof}

\begin{proof}[Proof of Theorem \ref{introduction:theorem:classification}]
  Combine Corollary \ref{cor:cox_ring_cox_sheaf_nonclosed} with Corollary
  \ref{cor:bijection_torsors_cox_sheaves}.
\end{proof}

Now we discuss the functorial properties of Cox rings and Cox sheaves
of varieties over an arbitrary field. We refer to \cite[\S 1.5]{MR899402} and
\cite[\S 6]{MR1995130} for the analogous properties of torsors under
quasitori and of Cox rings of identity type, respectively.

By \cite[Proposition 1.5.2]{MR899402}, the exact
sequence~(\ref{eq:exact_sequence_type}), which classifies $X$-torsors under a
quasitorus $G$, is functorial in $\Xc$ and $\group$, covariant in $\group$ and
contravariant in $\Xc$. Therefore, if $M$ denotes the $\galois$-module dual to
$\group$ (i.e., the group of characters of $G_{\kbar}$), the exact
sequence~(\ref{eq:exact_sequence_type}) is functorial in $\Xc$ and
$\gradgroup$, contravariant with respect to both.

We consider first the functoriality with respect to morphisms of quasitori
$\group\to\group'$, or equivalently, with respect to the induced morphisms of
dual $\galois$-modules $\groupmorphism\colon \gradgroup'\to\gradgroup$.

If
$\coxsheaf$ is a Cox sheaf of $\Xc$ of type
$\typecox\colon \gradgroup\to\pic(\Xcbar)$, and
$\{\coxisom_{\groupelement,
  \divisor}\}_{(\groupelement,\divisor)\in\gradgroupdiv}$ is a family of
isomorphisms associated with $\coxsheaf_{\kbar}$, then the family of
isomorphisms
$\{\coxisom_{\groupmorphism(\groupelement'),
  \divisor}\}_{(\groupelement',\divisor)\in\gradgroup'_{\lambda\circ\varphi}}$
defines on
$\bigoplus_{\groupelement'\in\gradgroup'}(\coxsheaf_{\kbar})_{\groupmorphism(\groupelement')}$ a structure of Cox sheaf of type $\lambda\circ\varphi$. The
latter inherits from $\coxsheaf_{\kbar}$ an action of $\galois$ that turns it
into a $\galois$-equivariant Cox sheaf.
\begin{defin}\label{def:pullback}  
  We define the \emph{pullback of $\coxsheaf$ under $\varphi$} to be
  \begin{equation*}
  \groupmorphism^*\coxsheaf\coloneqq 
  \left(\bigoplus_{\groupelement'\in\gradgroup'}
  (\coxsheaf_{\kbar})_{\groupmorphism(\groupelement')}\right)^{\galois}.
\end{equation*}

\end{defin}

The morphism
$\groupmorphism^*\coxsheafbar\to\coxsheafbar$ that restricts to the identity
$(\groupmorphism^*\coxsheafbar)_{\groupelement'}
=(\coxsheafbar)_{\groupmorphism(\groupelement')}$ for all
$\groupelement'\in\gradgroup'$ is $\galois$-equivariant and descends to a
morphism of $\OO_{\Xc}$-algebras
\begin{equation}\label{eq:natural_morphism_pullback}
  \groupmorphism^*\coxsheaf\to\coxsheaf.
\end{equation} 
Moreover, each morphism $\morphism\colon \coxsheaf\to\coxsheaf'$ of Cox sheaves of
$\Xc$ of type $\typecox$ pulls back under $\groupmorphism$ to a morphism
$\groupmorphism^*(\morphism)
\colon \groupmorphism^*\coxsheaf\to\groupmorphism^*\coxsheaf'$ of Cox sheaves of
$\Xc$ of type $\typecox\circ\groupmorphism$ such that the following diagram is
commutative:
\begin{equation*}
  \begin{tikzpicture}
    \matrix (m) [matrix of math nodes, row sep=2em,
    column sep=3.5em, text height=1.5ex, text depth=0.25ex]
    { \groupmorphism^*\coxsheaf & \groupmorphism^*\coxsheaf' \\
      \coxsheaf & \coxsheaf'. \\ };
    
    \path[-stealth] (m-1-1) edge node[auto]
    {$\groupmorphism^*(\morphism)$}
    (m-1-2) edge node [auto] {} (m-2-1) 
    (m-2-1) edge node [auto]
    {$\morphism$} (m-2-2)  
    (m-1-2) edge node [auto] {} (m-2-2);
  \end{tikzpicture}
\end{equation*}

We show that the pullback of Cox sheaves under $\groupmorphism$
corresponds to the pushforward of torsors defined in \cite[Example 3,
p.~21]{MR1845760} under the dual morphism of quasitori
$\groupmorphismdual\colon \group\to\group'$.

\begin{prop}
  Let $\varphi\colon \gradgroup'\to\gradgroup$ be a morphism of finitely
  generated $\galois$-modules whose torsion is coprime to the
  characteristic of $\field$. Let $\coxsheaf$ be a Cox sheaf of type
  $\typecox\colon \gradgroup\to\pic(\Xcbar)$. Then
  $\groupmorphismdual_*\spec_\Xc\coxsheaf\cong\spec_\Xc\groupmorphism^*\coxsheaf$.
\end{prop}

\begin{proof}
  Let $\{U_i\}_i$ be an affine \'etale covering of $\Xc$ that
  trivializes the torsor $\spec_X\coxsheaf$. Let $U_{i,j}\coloneqq U_i\times_X U_j$, and let
  \begin{equation*}
    \cocycleY_{i,j}\in
    \Hom_{\galois}(\gradgroup,\OO_{U_{i,j,\fieldbar}}(U_{i,j,\fieldbar})^\times)
    \cong\gradgroupdual(U_{i,j})
  \end{equation*}
  be a cocycle representing $[\spec_\Xc\coxsheaf]\in
  H^1_{\text{\it{\'et}}}(\Xc,\gradgroupdualX)$.  Then the class
  $\groupmorphismdual_*[(\cocycleY_{i,j})_{i,j}]$ of
  $\groupmorphismdual_*\spec_\Xc\coxsheaf$ in
  $H^1_{\text{\it{\'et}}}(\Xc,\widehat{\gradgroup'}_{\Xc})$ is
  represented by the cocycle
  $(\cocycleY_{i,j}\circ\groupmorphism)_{i,j}$.

  By definition, the torsor $\spec_\Xc\coxsheaf$ is obtained by gluing
  \begin{equation*}
    \{(\spec_\Xc\coxsheaf)\times_\Xc
    U_{i}\cong \spec_{U_{i}}\OO_{U_{i,\fieldbar}}[M]^\galois\}_{i}
  \end{equation*}
  over the isomorphisms
  \begin{equation*}
    \OO_{U_{j,i,\fieldbar}}[M]
    \to\OO_{U_{i,j,\fieldbar}}[M],\qquad
    1\cdot m\mapsto\cocycleY_{i,j}(m)\cdot \groupelement, \quad \forall m\in M.
  \end{equation*}
  By definition of pullback, $\spec_\Xc\groupmorphism^*\coxsheaf$ is
  obtained by gluing
  \begin{equation*}
    \{(\spec_\Xc\groupmorphism^*\coxsheaf)\times_\Xc
    U_{i}\cong \spec_{U_{i}}\OO_{U_{i,\fieldbar}}[M']^\galois\}_{i}
  \end{equation*}
  over the isomorphisms
  \begin{equation*}
    \OO_{U_{j,i,\fieldbar}}[M']
    \to\OO_{U_{i,j,\fieldbar}}[M'],\qquad
    1\cdot m'\mapsto\cocycleY_{i,j}(\groupmorphism(m'))\cdot \groupelement',
    \quad \forall m'\in M'.
  \end{equation*}
  Hence the isomorphism class of $\spec_\Xc\groupmorphism^*\coxsheaf$
  in $H^1_{\text{\it{\'et}}}(\Xc,\widehat{\gradgroup'}_{\Xc})$ is
  represented by the cocycle
  $(\cocycleY_{i,j}\circ\groupmorphism)_{i,j}$.
\end{proof}

Now we consider the functoriality with respect to morphisms of
$\field$-varieties. Let $\structuremor\colon \Xc'\to\Xc$ be a morphism of
$\field$-varieties and $\coxsheaf$ a Cox sheaf of $\Xc$ of type
$\typecox\colon \gradgroup\to\pic(\Xcbar)$. We denote by
$\structuremor^*\colon \pic(\Xcbar)\to\pic(\Xcbar')$ the pullback of divisor classes
under $\structuremor$. We recall that the property of being a torsor over
$\Xc$ under $\group$ is stable under base extension. Therefore,
$\spec_{\Xc'}\structuremor^*\coxsheaf\cong\Xc'\times_\Xc\spec_\Xc\coxsheaf$ is
an $\Xc'$-torsor under $\gradgroupdual$.

\begin{prop}\label{prop:pullback_under_morphism_varieties}
  Let $\structuremor\colon \Xc'\to\Xc$ be a morphism of $\field$-varieties,
  $\structuremor^*\colon \pic(\Xcbar)\to\pic(\Xcbar')$ the induced pullback of
  divisor classes, and $\coxsheaf$ a Cox sheaf of $\Xc$ of type
  $\typecox\colon \gradgroup\to\pic(\Xcbar)$.
  Then $\structuremor^*\coxsheaf$ is a Cox sheaf of $\Xc'$ of type
  $\structuremor^*\circ\typecox\colon \gradgroup\to\pic(\Xcbar')$.
\end{prop}

\begin{proof}
  If
  $\{\coxisom_{\groupelement,
    \divisor}\}_{(\groupelement,\divisor)\in\gradgroupdiv}$ is a family of
  isomorphisms associated with the Cox sheaf $\coxsheaf$, then the family of
  isomorphisms
  \begin{equation*}
    \structuremor^*(\coxisom_{\groupelement, \divisor})\colon 
    \structuremor^*\coxsheaf_{\groupelement}\to\structuremor^*\OO_{\Xc'}(\divisor),
  \end{equation*}
  for $(\groupelement,\divisor)\in\gradgroupdiv$, defines a structure of Cox
  sheaf of $\Xc'$ of type $\structuremor^*\circ\typecox$ on
  $\structuremor^*\coxsheaf$, as $\structuremor^*$ commutes with direct
  limits.
\end{proof}

\begin{cor}\label{cor:functoriality}
  Let $\structuremor\colon \Xc'\to\Xc$ be a morphism of $\field$-varieties,
  and $\groupmorphism\colon \gradgroup'\to\gradgroup$ a morphism of
  finitely generated $\galois$-modules with torsion coprime to the
  characteristic of $\field$.
  \begin{enumerate}[label=(\arabic{*}), ref={\it{(\arabic{*})}}]
  \item\label{item:double_pullback_cox_sheaves} If $\coxsheaf$ is a
    Cox sheaf of $\Xc$ of type $\typecox\colon \gradgroup\to\pic(\Xcbar)$,
    then $\structuremor^*\groupmorphism^*\coxsheaf$ is a Cox sheaf of
    $\Xc'$ of type
    \begin{equation*}
      \structuremor^*\circ\typecox\circ\groupmorphism\colon \gradgroup'\to\pic(\Xcbar'). 
    \end{equation*}
  \item\label{item:double_pullback_morphism} If
    $\morphism\colon \coxsheaf\to\coxsheaf'$ is a morphism of Cox sheaves of
    $\Xc$ of type $\typecox$, then
    \begin{equation*}
      \structuremor^*(\groupmorphism^*(\morphism))\colon 
      \structuremor^*\groupmorphism^*\coxsheaf\to\structuremor^*\groupmorphism^*\coxsheaf'
    \end{equation*}
    is a morphism of Cox sheaves of $\Xc'$ of type
    $\structuremor^*\circ\typecox\circ\groupmorphism$.
  \item\label{item:double_pullback_functor} The double pullback
    $\structuremor^*\circ\groupmorphism^*$ is a covariant functor from
    the category of Cox sheaves of $\Xc$ of type $\typecox$ to the
    category of Cox sheaves of $\Xc'$ of type
    $\structuremor^*\circ\typecox\circ\groupmorphism$.
  \item There is an isomorphism of functors between
    $\structuremor^*\circ\groupmorphism^*$ and
    $\groupmorphism^*\circ\structuremor^*$.\label{item:pullback_functor_isomorphism}
  \end{enumerate}
\end{cor}

\begin{proof}
  Parts \ref{item:double_pullback_cox_sheaves} and
  \ref{item:double_pullback_morphism} are a consequence of Proposition
  \ref{prop:pullback_under_morphism_varieties} and the discussion
  above.

  For \ref{item:double_pullback_functor}, we observe that
  $\groupmorphism^*$ and $\structuremor^*$ are both covariant
  functors.  For \ref{item:pullback_functor_isomorphism}, we observe
  that
  $\structuremor^*\groupmorphism^*\coxsheaf=\groupmorphism^*\structuremor^*\coxsheaf$
  as $\structuremor^*$ commutes with direct limits. Therefore, also
  $\structuremor^*(\groupmorphism^*(\morphism))=\groupmorphism^*(\structuremor^*(\morphism))$,
  as $\groupmorphism^*(\morphism)$ and
  $\groupmorphism^*(\structuremor^*(\morphism))$ restrict to
  $\morphism$ and $\structuremor^*(\morphism)$, respectively, on the
  homogeneous parts of the Cox sheaves.
\end{proof}

The analogous functoriality properties of Cox rings can be deduced from
Corollary \ref{cor:functoriality} via Corollary
\ref{cor:cox_ring_cox_sheaf_nonclosed}.

\begin{remark}
  As in Corollary \ref{cor:functoriality}, let
  $\structuremor\colon \Xc'\to\Xc$ be a morphism of $\field$-varieties,
  assume that $\Xc'$ has a Cox sheaf $\coxsheaf'$ of identity type,
  and let $\coxsheaf$ be a Cox sheaf of $\Xc$ of type $\typecox$. Up
  to twisting $\coxsheaf'$ by an element of
  $H^1_{\text{\it{\'et}}}(\field,\gradgroupdual)$ as in Proposition~\ref{prop:cox_sheaf_classification}, we can assume that
  $\structuremor^*\coxsheaf\cong(\structuremor^*\circ\typecox)^*\coxsheaf'$. Then
  there exists a natural morphism of graded $\OO_{\Xc'}$-algebras
  $\structuremor^*\coxsheaf\to\coxsheaf'$ as in
  \eqref{eq:natural_morphism_pullback}, and also a morphism of graded
  $\OO_{\Xc}$-algebras $\coxsheaf\to\structuremor_*\coxsheaf'$, as
  $\structuremor^*$ and $\structuremor_*$ are adjoint functors. This
  gives a new proof of the existence statement in \cite[Proposition
  5.3]{MR1995130} by considering $\field=\fieldbar$ and
  $\typecox=\identity_{\pic(\Xc)}$.  We observe that in general
  $\structuremor_*\coxsheaf'$ is not a Cox sheaf of $\Xc$.
\end{remark}

Now we turn to existence criteria for Cox rings and Cox sheaves of varieties
over nonclosed fields.  We start with an example that shows that Cox rings of
identity type do not always exist.  We also observe that a Cox ring of $\Xc$
of type $\lambda$ exists if $\lambda$ factors through
$\pic(\Xc) \hookrightarrow \pic(\Xcbar)$, as Construction
\ref{construction:cox_sheaf_abstract} can be performed over $k$.

\begin{example}\label{exa:nonexistence}
  Let $\Xc \subset \PP^2_\RR$ be the conic defined by $x^2+y^2+z^2=0$,
  with $\Xc(\RR)=\emptyset$. The base change $\Xc_{\CC}$ is the image
  of the closed immersion
  \begin{equation*}
    \projectivemorphism\colon  \PP^1_\CC \to \PP^2_\CC, \qquad (u:v) \mapsto
    (2uv:u^2-v^2:\imaginary(u^2+v^2)).
  \end{equation*}
  A Cox ring of $\PP^1_\CC$ of type $\identity_{\pic(\PP^1_\CC)}$ is
  $\CC[u,v]$, where $u,v \in H^0(\PP^1_\CC, \mathcal{O}(1))$ vanish in
  $(0:1),(1:0)$, respectively. If $\Xc$ has a Cox ring $\coxring$ of
  type $\identity_{\pic(\Xc_{\CC})}$ over $\RR$, then $\coxring_\CC$
  is endowed with a natural action of
  $\galois\coloneqq \gal(\CC/\RR)=\{\identity,\galoiselement\}$. Via the
  isomorphism $\projectivemorphism$, this action pulls back to an
  action of $\galois$ on the Cox ring $\CC[u,v]$ of $\PP^1_\CC$ with
  the following properties.
  There exists
  $\galoisconstant\in \CC^\times$ such that
  $\galoiselement(u)=\galoisconstant v$ since $\galoiselement$ exchanges
  $(0:1:\imaginary)=\projectivemorphism((1:0))$ and
  $(0:1:-\imaginary)=\projectivemorphism((0:1))$.
  Then $\galoiselement^2(u)=u$
  gives $\galoiselement(v)=\galoiselement(\galoisconstant)^{-1}u$.
  Furthermore, $\galoiselement$ exchanges
  the points $(1:0:\imaginary)=\projectivemorphism((1:1))$ and
  $(1:0:-\imaginary)=\projectivemorphism((1:-1))$, hence
  $\galoiselement(u-v)=\galoisconstant
  v-\galoiselement(\galoisconstant)^{-1}u$ should be a scalar multiple
  of $u+v$. This implies
  $\galoisconstant=-\galoiselement(\galoisconstant)^{-1}$, which is
  impossible for $\galoisconstant\in \CC^\times$.  Therefore, the
  conic $X$ without $\RR$-rational points does not have a Cox ring of
  type $\identity_{\pic(\Xc_{\CC})}$.
  
  Now we compute a Cox ring of $\Xc$ of type
  $\pic(\Xc)\hookrightarrow\pic(\Xc_{\CC})$. Since $\pic(\Xc)$ is the subgroup of
  index 2 of $\pic(\Xc_{\CC})$, by Definition \ref{def:pullback} a Cox ring of
  $\Xc_{\CC}$ of type $\pic(\Xc)\hookrightarrow\pic(\Xc_{\CC})$ is isomorphic via
  $\psi$ to the subring of $\CC[u,v]$ generated by $u^2, uv, v^2$. Since
  $\CC[u^2, uv, v^2]=\CC[2uv,u^2-v^2,\imaginary(u^2+v^2)]$ as subrings of
  $\CC[u,v]$, we see that the homogeneous coordinate ring
  $\CC[x,y,z]/(x^2+y^2+z^2)$ of $X_{\CC}$ is a $\galois$-equivariant Cox ring
  of $\Xc_{\CC}$ of type $\pic(\Xc)\hookrightarrow\pic(\Xc_{\CC})$.  Hence
  $\RR[x,y,z]/(x^2+y^2+z^2)$ is a Cox ring of $\Xc$ of type
  $\pic(\Xc)\hookrightarrow\pic(\Xc_{\CC})$ in the sense of Definition
  \ref{def:cox_sheaf_nonclosed}.
\end{example}

\begin{remark}\label{rem:pullback_short}
  Assume that $\Xc$ has a Cox sheaf (ring) of injective type
  $\gradgroup'\hookrightarrow\pic(\Xcbar)$, where $\gradgroup'$ is a
  $\galois$-invariant subgroup of $\pic(\Xcbar)$. If
  $\typecox \colon  \gradgroup \to \pic(\Xcbar)$ factors through $\gradgroup'$, Cox
  sheaves (rings) of $\Xc$ of type $\typecox$ exist by pullback (see
  Definition \ref{def:pullback}).  Therefore, if $\Xc$ has a Cox sheaf (ring)
  of identity type, then Cox sheaves (rings) of $\Xc$ of all types exist
  (cf.~\cite[p.~25]{MR1845760}).
\end{remark}

The next proposition relates the existence of Cox sheaves to the existence of
$\galois$-equivariant characters in Construction
\ref{construction:cox_sheaf_abstract}, and shows that our definition of Cox
sheaves of identity type over nonclosed fields recovers \cite[Construction
6.1.3.3]{arXiv:1003.4229} for locally factorial $X$.
 
\begin{prop}\label{prop:equivariant_character}
  Let $\gradgroupgenerator\subseteq\gradgroup$ be a finite $\galois$-invariant
  set of generators for $\gradgroup$, and $\divisorsgenerator$ a finite
  $\galois$-invariant set of Cartier divisors such that
  $\typecox(\gradgroupgenerator)=\{[\divisor]:\divisor\in\divisorsgenerator\}$. Let
  \begin{equation*}
    \gradgroupgeneratordiv\coloneqq \{(\groupelement,\divisor)
    \in\gradgroupgenerator\times\divisorsgenerator:[\divisor]
    =\typecox(\groupelement)\}
  \end{equation*}
  with the componentwise action of $\galois$, and let
  $\freegroup\coloneqq \bigoplus_{\freeelement\in\gradgroupgeneratordiv}\ZZ\freeelement$
  with the induced $\galois$-action. Let
  $\presentation\colon \freegroup\to\gradgroup$ be defined by
  $\presentation((\groupelement,\divisor))\coloneqq \groupelement$ for all
  $(\groupelement,\divisor)\in\freegroup$, and
  $\freealgebra\coloneqq \bigoplus_{(\groupelement,\divisor)\in\freegroup}\OO_{\Xcbar}(\divisor)$
  the associated $\OO_{\Xcbar}$-algebra as in Construction
  \ref{construction:cox_sheaf_abstract}.  Then a Cox sheaf of $\Xc$ of type
  $\typecox$ exists if and only if there is a $\galois$-equivariant character
  associated with $\freealgebra$.
 \end{prop}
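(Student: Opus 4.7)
My plan is to prove both implications by realizing Cox sheaves concretely via Construction~\ref{construction:cox_sheaf_abstract} and descending via Proposition~\ref{prop:galois_descent_cox_rings_sheaves}, treating the forward direction as a formal descent and the converse as a cocycle computation to equivariantize the character produced by Proposition~\ref{prop:isomorphism_cox_sheaves_abstract}.

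For the ``if'' direction, assume a $\galois$-equivariant $\character$ exists. Since $\gradgroupgenerator$ and $\divisorsgenerator$ are $\galois$-invariant, $\galois$ permutes the basis $\gradgroupgeneratordiv$ of $\freegroup$ and acts on each summand $\freealgebra_{(\groupelement,\divisor)}=\OO_{\Xcbar}(\divisor)\subseteq\FfieldXbar$ via its natural action, giving a continuous $\galois$-action on $\freealgebra$. Equivariance of $\character$ then yields $\galoiselement(1-\character(\kernelement))=1-\character(\galoiselement\kernelement)\in\freeideal$, so $\freeideal$ is $\galois$-stable and $\coxsheaf:=\freealgebra/\freeideal$ inherits a continuous $\galois$-action. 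To verify this is a \compatible\ action in the sense of Definition~\ref{def:galois_action}, I would take the isomorphisms $\coxisom_{\groupelement,\divisor}:=\freemor|_{\freealgebra_{(\groupelement,\divisor)}}^{-1}$ for $(\groupelement,\divisor)\in\gradgroupgeneratordiv$, which are tautologically $\galois$-equivariant, and extend to the remaining elements of $\gradgroupdiv$ by arbitrary isomorphisms, under which the compatibility condition in Definition~\ref{def:galois_action} reduces to multiplication by a scalar in $\fieldbar^\times$. Proposition~\ref{prop:galois_descent_cox_rings_sheaves} then descends $\coxsheaf$ to a Cox sheaf of $\Xc$ of type $\typecox$.

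For the ``only if'' direction, suppose $\Xc$ has a Cox sheaf $\coxsheaf$ of type $\typecox$; the base change $\coxsheafbar$ is then $\galois$-equivariant. I would apply the construction in the proof of Proposition~\ref{prop:isomorphism_cox_sheaves_abstract} to $\coxsheafbar$ with the specific presentation $\freegroup\to\gradgroup$ from the statement, producing a character of the form $\character(\kernelement):=\morphism_{-\kernelement}(1)$ built from a choice of isomorphisms $\coxisom_{\groupelement,\divisor}$ for $(\groupelement,\divisor)\in\gradgroupgeneratordiv$. By Remark~\ref{rem:equivalent_galois_action}, there are scalars $\compconstant_{\galoiselement,\groupelement,\divisor}\in\fieldbar^\times$ with $\galoiselement^{-1}\circ\coxisom_{\galoiselement\groupelement,\galoiselement\divisor}\circ\galoiselement\circ\coxisom_{\groupelement,\divisor}^{-1}=\compconstant_{\galoiselement,\groupelement,\divisor}\identity$, and rescaling $\coxisom_{\groupelement,\divisor}$ by $\beta_{\groupelement,\divisor}\in\fieldbar^\times$ multiplies $\compconstant_{\galoiselement,\groupelement,\divisor}$ by the coboundary $\galoiselement^{-1}(\beta_{\galoiselement\groupelement,\galoiselement\divisor})\beta_{\groupelement,\divisor}^{-1}$. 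Decomposing the finite $\galois$-set $\gradgroupgeneratordiv$ into orbits and choosing $\beta$ orbit by orbit, I would arrange all $\compconstant$'s to equal $1$; translating through the proof of Proposition~\ref{prop:isomorphism_cox_sheaves_abstract}, this yields $\character(\galoiselement\kernelement)=\galoiselement\character(\kernelement)$ for all $\kernelement\in\kernfreegroup$.

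The main obstacle is the consistency of the orbit-by-orbit rescaling in the ``only if'' direction: once a representative $(\groupelement_0,\divisor_0)$ in each orbit is fixed with an arbitrary $\beta_{\groupelement_0,\divisor_0}$, the values of $\beta$ elsewhere on the orbit are forced, and the stabilizer of $(\groupelement_0,\divisor_0)$ imposes a compatibility condition on the scalars $\compconstant_{\galoiselement,\groupelement_0,\divisor_0}$ that must be checked. This should follow from the cocycle relation these scalars satisfy under composition of Galois elements and from the continuity of the $\galois$-action on $\coxsheafbar$, both reflecting the fact that $\coxsheafbar$ carries a globally defined $\galois$-action. The ``if'' direction, by contrast, reduces cleanly to descent once the induced $\galois$-action on $\coxsheaf$ is seen to be \compatible.
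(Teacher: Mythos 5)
Your ``if'' direction coincides with the paper's: the $\galois$-equivariance of $\character$ makes the ideal sheaf $\freeideal$ stable under the natural $\galois$-action on $\freealgebra$, the quotient is a $\galois$-equivariant Cox sheaf, and Proposition~\ref{prop:galois_descent_cox_rings_sheaves} descends it. Your ``only if'' direction also follows the paper's architecture: produce a character via Proposition~\ref{prop:isomorphism_cox_sheaves_abstract}, measure the failure of its equivariance by scalars $\compconstant_{\galoiselement,\freeelement}\in\fieldbar^\times$ indexed by $\freeelement\in\gradgroupgeneratordiv$, observe that they satisfy a cocycle relation, and remove them by rescaling the isomorphisms. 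The only real difference is packaging: you work orbit by orbit, whereas the paper assembles the whole family into a single $1$-cocycle valued in $(\fieldbar^\times)^{\gradgroupgeneratordiv}=(\weilres_{K/\field}\G_{m})(\fieldbar)$, where $K$ is the finite \'etale $\field$-algebra corresponding to the $\galois$-set $\gradgroupgeneratordiv$.

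There is, however, a genuine gap at exactly the obstacle you flag. Fixing an orbit with representative $\freeelement_0$ and stabilizer $H\leq\galois$, the assignment $\beta_{\galoiselement\freeelement_0}:=\galoiselement(\beta_{\freeelement_0})\compconstant_{\galoiselement,\freeelement_0}^{-1}$ is well defined only if $\compconstant_{h,\freeelement_0}=\beta_{\freeelement_0}^{-1}h(\beta_{\freeelement_0})$ for all $h\in H$. The cocycle relation and continuity only tell you that $h\mapsto\compconstant_{h,\freeelement_0}$ is a continuous $1$-cocycle of $H$ with values in $\fieldbar^\times$; they do \emph{not} imply it is a coboundary, so the consistency you need is not a formal consequence of the global $\galois$-action on $\coxsheafbar$. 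The missing ingredient is Hilbert's Theorem~90, i.e.\ $H^1(H,\fieldbar^\times)=0$ for the open subgroup $H$, which is what allows you to choose $\beta_{\freeelement_0}$ splitting this restricted cocycle. This is precisely the input the paper supplies, in the equivalent global form $H^1(\galois,(\fieldbar^\times)^{\gradgroupgeneratordiv})\cong H^1_{\text{\it{\'et}}}(K,\G_{m})=0$ via the Eckmann--Faddeev--Shapiro lemma. Once you insert that vanishing, your orbit-by-orbit argument becomes a correct, more hands-on rendering of the paper's proof.
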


 \begin{proof}
   Let $\kernfreegroup$ be the kernel of $\presentation$, and let
   $\freealgebra_{(\groupelement,\divisor)}\coloneqq \OO_{\Xcbar}(\divisor)$
   for all $(\groupelement,\divisor)\in\freegroup$.  Given a
   $\galois$-equivariant character
   $\character\colon \kernfreegroup\to\FfieldXbar^\times$ associated with
   $\freealgebra$, the sheaf $\freeideal$ of ideals of $\freealgebra$
   defined by $\character$ is invariant under the action of $\galois$
   on $\freealgebra$ induced by the natural $\galois$-action on
   $\FfieldXbar$. Therefore, $\coxsheaf\coloneqq \freealgebra/\freeideal$ is a
   $\galois$-equivariant Cox sheaf of $\Xcbar$ of type $\typecox$, and
   the sheaf of invariants $\coxsheaf^\galois$ is a Cox sheaf of $\Xc$
   of type $\typecox$.

   Assume now that there is a Cox sheaf $\coxsheaf$ of $\Xc$ of type
   $\typecox$. Then $\coxsheafbar$ is a $\galois$-equivariant Cox
   sheaf of $\Xcbar$ of type $\typecox$. By Proposition
   \ref{prop:isomorphism_cox_sheaves_abstract}, we can assume that
   $\coxsheafbar=\freealgebra/\freeideal$, where $\freeideal$ is the
   sheaf of ideals of $\freealgebra$ defined by a character
   $\character\colon \kernfreegroup\to\FfieldXbar^\times$ associated with
   $\freealgebra$. Let $\freemor\colon \freealgebra\to\coxsheafbar$ be the
   projection.
 
   Let
   $\{\coxisom_{\groupelement,\divisor}\}_{(\groupelement,\divisor)\in\gradgroupdiv}$
   be a family of isomorphisms associated with $\coxsheafbar$. Without
   loss of generality, we can assume that
   $\coxisom_{\groupelement,\divisor}=\freemor|_{\freeinvsheaf{(\groupelement,\divisor)}}^{-1}$
   for all $(\groupelement,\divisor)\in\freegroup$.  By Definition
   \ref{def:galois_action} and Lemma \ref{lem:isom_inv_sheaves}, for
   every $\galoiselement\in \galois$ and every
   $(\groupelement,\divisor)\in\freegroup$, there exists a constant
   $\compconstant_{\galoiselement,(\groupelement,\divisor)}\in\fieldbar^\times$
   such that
  \begin{equation*}
    (\freemor|_{\freeinvsheaf{(\galoiselement\groupelement,\galoiselement\divisor)}}^{-1}
    \circ
    \galoiselement\circ\freemor|_{\freeinvsheaf{(\groupelement,\divisor)}})(\coxsec)
    =\compconstant_{\galoiselement,(\groupelement,\divisor)}\galoiselement(\coxsec)
  \end{equation*}
  for all sections $\coxsec$ of
  $\freeinvsheaf{(\groupelement,\divisor)}=\OO_{\Xcbar}(\divisor)$.
  These constants satisfy
  \begin{equation*}
    \compconstant_{\galoiselement,\freeelement+\freeelement'}
    =\compconstant_{\galoiselement,\freeelement}\compconstant_{\galoiselement,\freeelement'} \qquad
    \compconstant_{\galoiselement\galoiselement',\freeelement}
    =\compconstant_{\galoiselement,\galoiselement'(\freeelement)}
    \galoiselement(\compconstant_{\galoiselement',\freeelement})
  \end{equation*}
  for all $\freeelement, \freeelement'\in\freegroup$ and all
  $\galoiselement,\galoiselement'\in\galois$.  Moreover,
  $\character(\galoiselement(\kernelement))
  =\compconstant_{\galoiselement,-\kernelement}
  \galoiselement(\character(\kernelement))$ for all
  $\kernelement\in\kernfreegroup$.
  
  Consider $(\fieldbar^\times)^{\gradgroupgeneratordiv}$ with the induced
  $\galois$-action, that is,
  \begin{equation*}
    \galoiselement*(\beta_{\freeelement})_{\freeelement\in\gradgroupgeneratordiv}
    \coloneqq (\galoiselement
    (\beta_{\galoiselement^{-1}(\freeelement)}))_{\freeelement\in\gradgroupgeneratordiv}
  \end{equation*}
  for all $\galoiselement\in\galois$ and
  $(\beta_{\freeelement})_{\freeelement\in\gradgroupgeneratordiv}
  \in(\fieldbar^\times)^{\gradgroupgeneratordiv}$. The map
  $\alpha\colon \galois\to(\fieldbar^\times)^{\gradgroupgeneratordiv}$ defined by
  $\alpha(\galoiselement)\coloneqq 
  (\compconstant_{\galoiselement,\galoiselement^{-1}(\freeelement)}
  )_{\freeelement\in\gradgroupgeneratordiv}$
  is a cocycle.  By \cite[\S V.7, Proposition V.8.1]{MR2017446}, there is an
  equivalence of categories between the category of finite sets with a
  continuous $\galois$-action and the category of finite \'etale
  $\field$-algebras. Let $K$ be a finite \'etale $\field$-algebra
  corresponding to $\gradgroupgeneratordiv$. Then
  $(\fieldbar^\times)^{\gradgroupgeneratordiv}$ with the $\galois$-action
  recalled above is the set of $\fieldbar$-rational points of the Weil
  restriction $\weilres_{K/\field}\G_{m}$. By Hilbert's Theorem 90 and the
  Lemma of Eckmann--Faddeev--Shapiro \cite[Theorem 29.2 and Lemma
  29.6]{MR1632779},
  \begin{equation*}
    H^1(\galois,(\fieldbar^\times)^{\gradgroupgeneratordiv})\cong
    H^1_{\text{\it{\'et}}}(\field,\weilres_{K/\field}\G_{m}) \cong
    H^1_{\text{\it{\'et}}}(K,\G_{m})=0.
  \end{equation*}
  Therefore, there is
  $\beta=(\beta_{\freeelement})_{\freeelement\in\gradgroupgeneratordiv}
  \in\gradgroupgeneratordiv\times\fieldbar^\times$
  such that $\alpha(\galoiselement)=\beta^{-1}\galoiselement*\beta$
  for all $\galoiselement\in\galois$. That is,
  $\compconstant_{\galoiselement,\freeelement}=
  \beta_{\galoiselement(\freeelement)}^{-1}\galoiselement(\beta_{\freeelement})$
  for all $\galoiselement\in\galois$ and
  $\freeelement\in\gradgroupgeneratordiv$.  Given
  $\freeelement'\in\freegroup$, write
  $\freeelement'=\sum_{\freeelement\in\gradgroupgeneratordiv}a_{\freeelement}\freeelement$
  with $a_{\freeelement}\in \ZZ$, and define
  \begin{equation*}
    \beta_{\freeelement'}\coloneqq 
    \prod_{\freeelement\in\gradgroupgeneratordiv}\beta_{\freeelement}^{a_{\freeelement}}.
  \end{equation*}
  The homomorphism $\character'\colon \kernfreegroup\to\FfieldXbar^\times$ that
  sends $\kernelement\in\kernfreegroup$ to
  $\beta_{\kernelement}^{-1}\character(\kernelement)$ is $\galois$-equivariant
  and satisfies $\divi_0(\character'((0,\divisor)))=\divisor$ for all
  $(0,\divisor)\in\kernfreegroup$.
\end{proof}

From \cite[Proposition 2.2.8]{MR899402}, we know that the existence of
universal torsors (that is, torsors of type $\identity_{\pic(\Xcbar)}$) for a
\emph{smooth} variety $\Xc$ over a nonclosed field $\field$ is equivalent to
the existence of a $\galois$-equivariant splitting of the exact sequence
\begin{equation}\label{eq:exact_sequence_X_nonclosed}
  1 \to \fieldbar^\times \to \FfieldXbar^\times \to \FfieldXbar^\times/\fieldbar^\times
  \to 1.
\end{equation}
Moreover, if $\opensubset$ is a nonempty open subset of $\Xc$, and
$\gradgroup$ is the kernel of the natural homomorphism
$\pic(\Xcbar)\to\pic(\opensubsetbar)$, the existence of torsors of injective
type $\gradgroup\hookrightarrow\pic(\Xcbar)$ is equivalent to the existence of a
$\galois$-equivariant splitting of the exact sequence
\begin{equation}\label{eq:exact_sequence_U_nonclosed}
  1 \to \fieldbar^\times \to \globalUbar^\times
  \to \globalUbar^\times/\fieldbar^\times \to 1.
\end{equation}
As a consequence of Corollaries \ref{cor:cox_ring_cox_sheaf_nonclosed}
and \ref{cor:bijection_torsors_cox_sheaves}, this is also equivalent
to the existence of Cox sheaves (and Cox rings if
$\gradgroup=\gradgroupeff$) of $\Xc$ of the same type.

In our more general setting, where $\Xc$ can be singular, we show how
to construct a Cox sheaf of $\Xc$ of type $\typecox$ starting from
$\galois$-equivariant splittings of such exact sequences.  Then
Proposition~\ref{prop:cox_sheaves_torsors_nonclosed} gives a torsor of
$\Xc$ of type $\typecox$ in this setting, generalizing the results of
\cite{MR899402} mentioned above.
  
\begin{construction}\label{construction:cox_sheaf_splitting_U}
  Let $\opensubset\subseteq\Xc$ be a nonempty open subset such that
  $\typecox(\gradgroup)$ is contained in the kernel of the natural morphism
  $\pic(\Xcbar)\to\pic(\opensubsetbar)$. Assume that
  \eqref{eq:exact_sequence_U_nonclosed} admits a $\galois$-equivariant
  splitting $\splitting\colon \globalUbar^\times\to\fieldbar^\times$.  For every
  $\groupelement\in\gradgroup$, let $\divisor_\groupelement$ be a Cartier
  divisor supported on $\Xcbar\smallsetminus\opensubsetbar$ representing the
  class $\typecox(\groupelement)$ in $\pic(\Xcbar)$, and let
  $\coxsheaf_{\groupelement}\coloneqq \OO_\Xc(\divisor_\groupelement)$. Endow
  $\coxsheaf\coloneqq \bigoplus_{\groupelement\in\gradgroup}\coxsheaf_{\groupelement}$
  with the following multiplication of sections induced by $\splitting$. For
  every open subset $V$ of $\Xcbar$ and homogeneous sections
  $\coxsec_1,\coxsec_2\in\coxsheaf(V)$ of degree
  $\groupelement_1,\groupelement_2\in\gradgroup$, respectively, let
  $f\in\globalUbar^\times$ be the unique element such that
  $\divisor_{\groupelement_1}+\divisor_{\groupelement_2}
  =\divi_{\divisor_{\groupelement_1+\groupelement_2}}(f)$
  and $\sigma(f)=1$. Define
  $\coxsec_1\coxsec_2\coloneqq f\coxsec_1\coxsec_2
  \in\coxsheaf_{\groupelement_1+\groupelement_2}(V)$,
  where the product on the right is computed in $\FfieldXbar$. Then
  $\coxsheaf$ is a Cox sheaf of $\Xcbar$ of type $\typecox$ that is
  equivariant with respect to the following action of $\galois$. For every
  $\galoiselement\in\galois$ and $\groupelement\in\gradgroup$, let
  $f\in\FfieldXbar^\times$ be the unique element such that
  $\galoiselement\divisor_{\groupelement}
  =\divi_{\divisor_{\galoiselement\groupelement}}(f)$
  and $\splitting(f)=1$. Define
  $\galoiselement*\coxsec\coloneqq f\galoiselement(\coxsec)\in
  \coxsheaf_{\galoiselement\groupelement}$
  for all sections $\coxsec\in\coxsheaf_{\groupelement}$, where the product on
  the right is computed in $\FfieldXbar$.

  Then the sheaf of invariants $\coxsheaf^{\galois}$ is a Cox sheaf of $\Xc$
  of type $\typecox$ by Proposition
  \ref{prop:galois_descent_cox_rings_sheaves}, and $\coxsheaf^{\galois}(\Xc)$
  is a Cox ring of $\Xc$ of type $\typecox$ by Corollary
  \ref{cor:cox_ring_cox_sheaf_nonclosed}.
\end{construction}
  
\begin{remark}\label{rem:cox_sheaf_splitting_X}
  We note that a $\galois$-equivariant splitting of
  \eqref{eq:exact_sequence_X_nonclosed} induces a $\galois$-equivariant
  splitting of \eqref{eq:exact_sequence_U_nonclosed} for all nonempty open
  subsets $\opensubset$ of $\Xc$. Hence a $\galois$-equivariant splitting of
  \eqref{eq:exact_sequence_X_nonclosed} ensures the existence of Cox rings and
  Cox sheaves of $\Xc$ of every type.
\end{remark}
  
The following proposition  generalizes \cite[Proposition 2.2.8(v)]{MR899402}.

\begin{prop}\label{prop:cox_sheaf_splitting_nonclosed}
  If $\Xcbar$ is locally factorial, and $\Xc$ has a Cox sheaf of injective
  type $\typecox$, then the natural exact sequence
  \eqref{eq:exact_sequence_U_nonclosed} has a $\galois$-equivariant splitting
  for every nonempty open subset $\opensubset \subseteq \Xc$ such that
  $\typecox(\gradgroup)$ is the kernel of the natural morphism
  $\pic(\Xcbar)\to\pic(\opensubsetbar)$.

  If in addition $\field$ is perfect and
  $\typecox=\identity_{\pic(\Xcbar)}$, also the exact sequence
  \eqref{eq:exact_sequence_X_nonclosed} admits a $\galois$-equivariant
  splitting.
\end{prop}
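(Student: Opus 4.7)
The plan for part~(1) is to construct the splitting directly from the Cox sheaf by means of Proposition~\ref{prop:equivariant_character}. Since $\Xcbar$ is locally factorial, Cartier and Weil divisors agree and the assumption $\typecox(\gradgroup)=\ker(\pic(\Xcbar)\to\pic(\opensubsetbar))$ yields the exact sequence of $\galois$-modules
\begin{equation*}
  1 \to \fieldbar^\times \to \globalUbar^\times \xrightarrow{\divi} F \to \typecox(\gradgroup) \to 0,
\end{equation*}
where $F$ denotes the free abelian group on the finite set $\divisorsgenerator$ of prime Cartier divisors of $\Xcbar$ contained in $\Xcbar\setminus\opensubsetbar$, a finitely generated $\galois$-permutation module. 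Setting $F_0:=\ker(F\to\pic(\Xcbar))$, the map $\divi$ identifies $\globalUbar^\times/\fieldbar^\times$ with $F_0$. I take $\gradgroupgenerator:=\typecox^{-1}(\{[\divisor]:\divisor\in\divisorsgenerator\})\subseteq\gradgroup$; by injectivity of $\typecox$ this is well-defined, finite, $\galois$-invariant, and generates $\gradgroup$. With this choice, the assignment $(\groupelement,\divisor)\mapsto\divisor$ is a $\galois$-equivariant identification of the free group $\freegroup$ of Proposition~\ref{prop:equivariant_character} with $F$ and of the subgroup $\kernfreegroup$ with $F_0$. Applying that proposition to the Cox sheaf $\coxsheaf$ produces a $\galois$-equivariant character $\character:F_0\to\FfieldXbar^\times$ with $\divi(\character(\kernelement))=\kernelement$ for every $\kernelement\in F_0$. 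Since $\kernelement$ is supported on $\Xcbar\setminus\opensubsetbar$, the value $\character(\kernelement)$ lies in $\globalUbar^\times$, so $\character$ is a $\galois$-equivariant section of $\divi:\globalUbar^\times\twoheadrightarrow F_0$. The formula
\begin{equation*}
  \splitting(f):=f/\character(\divi(f))
\end{equation*}
then delivers the desired $\galois$-equivariant splitting: the output has trivial divisor on $\Xcbar$ and thus lies in $\globalXbar^\times=\fieldbar^\times$; $\splitting$ is a group homomorphism because $\character$ is; $\splitting$ restricts to the identity on $\fieldbar^\times$ because $\character(0)=1$; and $\galois$-equivariance of $\splitting$ follows from that of $\character$ and $\divi$.

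For part~(2), assume in addition that $\field$ is perfect and $\typecox=\identity_{\pic(\Xcbar)}$. Since $\pic(\Xcbar)$ is finitely generated, its $\galois$-action factors through a finite quotient; hence the $\galois$-orbit of any finite set of representatives of generators of $\pic(\Xcbar)$ is a finite $\galois$-invariant family of prime Cartier divisors whose classes still generate $\pic(\Xcbar)$. The complement $\opensubsetbar\subseteq\Xcbar$ of their union is $\galois$-stable and descends, using $\field$ perfect, to an open subset $\opensubset\subseteq\Xc$ with $\pic(\opensubsetbar)=0$; in particular $\typecox(\gradgroup)=\pic(\Xcbar)=\ker(\pic(\Xcbar)\to\pic(\opensubsetbar))$, so part~(1) yields a $\galois$-equivariant splitting of \eqref{eq:exact_sequence_U_nonclosed} for this $\opensubset$. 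The passage from such a splitting to a $\galois$-equivariant splitting of \eqref{eq:exact_sequence_X_nonclosed} is the content of \cite[Proposition~2.2.8]{MR899402}.

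The main technical point is the bookkeeping in part~(1): one must verify that the chosen $\divisorsgenerator$ and $\gradgroupgenerator$ (the latter requiring injectivity of $\typecox$) produce $\galois$-equivariant identifications $\freegroup\cong F$ and $\kernfreegroup\cong F_0$, and that the resulting character $\character$ of Proposition~\ref{prop:equivariant_character} truly factors through $\globalUbar^\times\subseteq\FfieldXbar^\times$ and realizes a set-theoretic section of $\divi$. Once this is in hand, the splitting formula and its verification are essentially immediate, and part~(2) reduces to a standard existence-of-$U$ argument combined with the cited result.
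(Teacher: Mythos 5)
Your part~(1) follows the paper's own argument, with the bookkeeping made explicit: the paper likewise takes the free group of Cartier divisors supported outside $\opensubsetbar$ (free on the prime divisors of the complement, hence carrying a $\galois$-invariant basis), identifies $\globalUbar^\times/\fieldbar^\times$ with the subgroup of principal such divisors, invokes Proposition~\ref{prop:equivariant_character} to obtain a $\galois$-equivariant character whose image lies in $\globalUbar^\times$, and reads that character as a splitting of \eqref{eq:exact_sequence_U_nonclosed}. Your identifications $\freegroup\cong F$ and $\kernfreegroup\cong F_0$ (which do require the injectivity of $\typecox$, as you note) and the retraction formula $\splitting(f)=f/\character(\divi(f))$ are exactly the details the paper leaves tacit, and they are correct.

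The one step that does not quite close as written is the final citation in part~(2). \cite[Proposition~2.2.8]{MR899402} is a statement about \emph{smooth} varieties, whereas here $\Xcbar$ is only assumed locally factorial, hence normal but possibly singular, so the proposition does not apply verbatim. The paper bridges this by replacing $\Xcbar$ with its smooth locus $X'$ and $\opensubsetbar$ with $\opensubsetbar\cap X'$: normality forces the singular locus to have codimension at least $2$, so neither the function field nor the groups $\globalUbar^\times$ and $\fieldbar^\times$ change, the sequences \eqref{eq:exact_sequence_X_nonclosed} and \eqref{eq:exact_sequence_U_nonclosed} for $X'$ coincide with those for $\Xcbar$, and the cited result then applies. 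You should insert this reduction. Two smaller remarks: your explicit construction of a suitable $\opensubset$ is a genuine addition (the paper tacitly assumes one exists), but perfectness of $\field$ is not what you need to descend that open set --- a $\galois$-stable open subset of $\Xcbar$ descends over any field; perfectness is what licenses the Colliot-Th\'el\`ene--Sansuc step itself.
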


\begin{proof}
  Let $\opensubset \subseteq \Xc$ be as in the statement. Then the group
  $\freegroup$ of Cartier divisors on $\Xcbar$ supported outside
  $\opensubsetbar$ is free, finitely generated and has a $\galois$-invariant
  basis (consisting of the prime divisors supported outside
  $\opensubsetbar$). The group $\globalUbar^\times/\fieldbar^\times$ is
  naturally identified with the subgroup $\kernfreegroup\subseteq\freegroup$
  of principal divisors. By Proposition \ref{prop:equivariant_character},
  there exists a $\galois$-equivariant character
  $\character\colon \kernfreegroup\to\FfieldXbar^\times$ associated with
  $\freealgebra\coloneqq \bigoplus_{\divisor\in\freegroup}\OO_\Xc(\divisor)$.  Since
  every element of $\Lambda_0$ is supported outside $\opensubsetbar$, the
  image of $\character$ is contained in $\globalUbar^\times$. Therefore,
  $\character$ is a $\galois$-equivariant splitting of the exact sequence
  \eqref{eq:exact_sequence_U_nonclosed} associated with $\opensubset$.
  
  Assume now that $\field$ is perfect and that
  $\typecox=\identity_{\pic(\Xcbar)}$.  Let $X'$ be the smooth locus of
  $\Xcbar$ and $U'\coloneqq \opensubsetbar\cap X'$. Since $\Xcbar$ is normal, its
  singular locus has codimension $\geq2$. Hence $\FfieldXbar=\fieldbar(X')$
  and $\globalUbar=\fieldbar[U']$.  Then, according to \cite[Proposition
  2.2.8]{MR899402}, the exact sequence \eqref{eq:exact_sequence_X_nonclosed}
  admits a $\galois$-equivariant splitting whenever
  \eqref{eq:exact_sequence_U_nonclosed} does.
\end{proof}
  
Every rational point $x\in\opensubset(\field)$ defines a $\galois$-equivariant
splitting $\sigma_x\colon \globalUbar^\times\to\fieldbar^\times$ of
\eqref{eq:exact_sequence_U_nonclosed} by $\sigma_x(f)\coloneqq f(x)$ for all
$f\in\globalUbar^\times$.  Moreover, every smooth $\field$-rational point on
$\Xc$ defines a $\galois$-equivariant splitting of
\eqref{eq:exact_sequence_X_nonclosed} by \cite[Remarque 2.2.3]{MR899402}.
Therefore, if $\Xc$ has a smooth $\field$-rational point, Cox sheaves and Cox
rings of $\Xc$ of every type exist by Construction
\ref{construction:cox_sheaf_splitting_U}.  If $\Xc(\field)\neq\emptyset$, the
same result can be obtained combining \cite[Proposition 1]{MR0447246} with
Proposition \ref{prop:torsors_cox_sheaves_nonclosed}.
 
The following construction generalizes \cite[Construction 1.4.2.3]{arXiv:1003.4229}.
 
\begin{construction}\label{construction:rational_point_cox_sheaf}
  Let $x\in\Xc(\field)$. Let
  $\gradgroupgenerator, \divisorsgenerator, \gradgroupgeneratordiv,
  \freegroup, \presentation, \freealgebra$ be as in
  Proposition~\ref{prop:equivariant_character}.
  Let $\kernfreegroup$ be the kernel of
  $\presentation$.  The morphism
  $\character\colon \kernfreegroup\to\FfieldXbar^\times$ that sends
  $(0,\divisor)\in\kernfreegroup$ to the unique element
  $f\in\FfieldXbar^\times$ such that $\divi_{0}(f)=\divisor$ and $f(x)=1$ is a
  $\galois$-equivariant character associated with $\freealgebra$. Let
  $\freeideal$ be the sheaf of ideals of $\freealgebra$ defined by
  $\character$ as in Construction \ref{construction:cox_sheaf_abstract}.
  Then the Cox sheaf $\coxsheaf\coloneqq \freealgebra/\freeideal$ is
  $\galois$-equivariant by Proposition \ref{prop:equivariant_character}. Let
  $\torsormor\colon \spec_\Xc\coxsheaf^\galois\to\Xc$ be the induced torsor of type
  $\typecox$. Then $x\in\torsormor((\spec_\Xc\coxsheaf^\galois)(\field))$.
\end{construction}

\begin{proof}
  The set of Cartier divisors of $\Xcbar$ that do not contain $x$ in
  their support form a $\galois$-invariant group that generates
  $\pic(\Xcbar)$. Indeed, if $\divisor=\{(U_i,f_i)\}_i$ is a Cartier
  divisor on $\Xcbar$ that contains $x$ in its support, take $j$ such
  that $x\in U_j$. Then the divisor $\divi_\divisor(f_j^{-1})$ is
  linearly equivalent to $\divisor$ and is supported outside
  $x$. Therefore, it is always possible to choose a set
  $\divisorsgenerator$ as in the statement. The character $\character$
  defined by $x$ as above is $\galois$-equivariant because $x$ is
  $\galois$-invariant.
  
  Let $U$ be an affine open neighborhood of $x$ in $\Xc$. Then the
  homomorphism $\morphism\colon \freealgebra(U_{\fieldbar})\to\fieldbar$
  defined by $\morphism(\coxsec)\coloneqq \coxsec(x)$ for all homogeneous
  sections $\coxsec\in\freealgebra(U_{\fieldbar})$ is well-defined
  because $\divisor$ is supported outside $x$ for all
  $(\groupelement,\divisor)\in\freegroup$, and $\galois$-equivariant
  because $x$ is $\galois$-invariant. Since
  $\character(\kernelement)(x)=1$ for all
  $\kernelement\in\kernfreegroup$, the homomorphism $\morphism$
  factors through $\coxsheaf(U_{\fieldbar})$ and defines a
  $\field$-rational point on $\torsormor^{-1}(x)$ by Galois descent.
\end{proof}

\begin{remark}\label{rem:principal_universal_torsor}
  If $\Xc$ is a smooth complete toric variety containing an open torus
  $U$, then we can apply Construction
  \ref{construction:rational_point_cox_sheaf} with
  $\gradgroup\coloneqq \pic(\Xcbar)$, $\typecox\coloneqq \identity_{\pic(\Xcbar)}$, $x$
  the neutral element of $U$, $\divisorsgenerator$ the set of
  $U$-invariant divisors on $\Xcbar$, and $\gradgroupgenerator$ the set
  of their classes. The resulting Cox sheaf corresponds to \emph{the
    principal universal torsor} described in \cite[\S2.4.4]{MR899402}
  and \cite[end of \S8]{MR1679841}.
\end{remark}

We recall that if $\lambda$ factors through $\pic(\Xc) \hookrightarrow
\pic(\Xcbar)$, a Cox ring of $\Xc$ of type $\lambda$ always exists,
even if $\Xc(\field)=\emptyset$.  The following example shows that the
existence of a $\field$-rational point on $\Xc$ is not necessary for
the existence of Cox rings of $\Xc$ of arbitrary type. See
\cite[Exemples~2.2.12]{MR899402} for further examples.

\begin{example}\label{exa:cox_ring_no_rational_point}
  Let $k$ be an arbitrary number field.  Let $\Xc$ be the smooth
  projective fourfold over $k$ in \cite[Theorem
  3.5]{arXiv:1409.6706}. Then $\Xc(k)=\emptyset$, and $X$ is a
  counterexample to the Hasse principle, but there is no \'etale (and
  hence no algebraic) Brauer--Manin obstruction to the Hasse
  principle. Moreover, $\pic(\Xcbar)$ is a finitely generated abelian
  group, as the Albanese variety of $X$ is trivial. Therefore, $X$ has
  a universal torsor by \cite[Corollary 6.1.3]{MR1845760}, and Cox
  rings and Cox sheaves of $\Xc$ of all types exist by
  Proposition~\ref{prop:torsors_cox_sheaves_nonclosed} and Remark
  \ref{rem:pullback_short}.
\end{example}

\begin{proof}[Proof of Theorem \ref{thm:splttings}]
  For the statements regarding Cox sheaves, see Proposition
  \ref{prop:equivariant_character}, Construction
  \ref{construction:cox_sheaf_splitting_U}, Remark
  \ref{rem:cox_sheaf_splitting_X}, Proposition
  \ref{prop:cox_sheaf_splitting_nonclosed} and Construction
  \ref{construction:rational_point_cox_sheaf}. For the statements regarding
  torsors and Cox rings, we combine these with Theorem
  \ref{introduction:theorem:classification}.
\end{proof}

\section{Finitely generated Cox rings}\label{section:finitely_generated_cox_rings}

We observe that, for every Cox sheaf $\coxsheaf$ of $\Xc$ of type $\typecox$,
the natural morphism $\spec_{\Xc}\coxsheaf\to\spec\coxsheaf(\Xc)$ is
equivariant under the $\gradgroupdual$-actions on $\spec_{\Xc}\coxsheaf$ and
$\spec\coxsheaf(\Xc)$ induced by the $\gradgroup$-grading on $\coxsheaf$ and
$\coxsheaf(\Xc)$, respectively.  We show that, if $\coxsheaf(X)$ is finitely
generated as a $k$-algebra, this morphism can be used to realize the torsor
$\spec_\Xc\coxsheaf$ as a quasiaffine variety, as in \cite[Construction
1.6.3.1]{arXiv:1003.4229} and \cite[Proposition 3.10]{MR1995130}.

\begin{prop}\label{prop:open_immersion}
  Assume that $\field=\fieldbar$, that $\coxsheaf$ is a Cox sheaf of
  $\Xc$ of type $\typecox$ such that $\coxsheaf(\Xc)$ is finitely
  generated as a $\field$-algebra, and that there are nonzero
  homogeneous sections $f_1,\dots,f_t\in\coxsheaf(\Xc)$ of degrees
  $\groupelement_1,\dots,\groupelement_t\in\gradgroup$, respectively,
  such that the open subsets
  $\Xc\smallsetminus\Supp(\divi_{\divisor_i}(\coxisom_{\groupelement_i,\divisor_i}(f_i)))$
  are affine and cover $\Xc$.

  Then the natural morphism
  $\spec_\Xc\coxsheaf\to\spec\coxsheaf(\Xc)$ is an
  $\gradgroupdual$-equivariant open immersion, and the complement of
  the image is defined by the ideal $\sqrt{(f_1,\dots,f_t)}$ of
  $\coxsheaf(\Xc)$.
\end{prop}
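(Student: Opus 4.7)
Set $R := \coxsheaf(\Xc)$ and $\opensubset_i := \Xc \smallsetminus \Supp(\divi_{\divisor_i}(\coxisom_{\groupelement_i,\divisor_i}(f_i)))$, so by hypothesis the $\opensubset_i$ form an affine cover of $\Xc$. The strategy is to show that, for each $i$, the restriction map $R \to \coxsheaf(\opensubset_i)$ sends $f_i$ to a unit, and the induced homomorphism of $\gradgroup$-graded $\field$-algebras $R_{f_i} \to \coxsheaf(\opensubset_i)$ is an isomorphism. Since $\opensubset_i$ is affine, this identifies $\spec_{\opensubset_i}(\coxsheaf|_{\opensubset_i}) = \spec\coxsheaf(\opensubset_i)$ with the principal open $D(f_i) \subseteq \spec R$ compatibly with the natural morphism $\spec_\Xc\coxsheaf \to \spec R$. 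Gluing over the cover then realizes $\spec_\Xc\coxsheaf$ as the open subscheme $\bigcup_i D(f_i) = \spec R \smallsetminus V(f_1,\dots,f_t)$ of $\spec R$, whose complement coincides set-theoretically with $V(\sqrt{(f_1,\dots,f_t)})$. Equivariance is automatic because both $\gradgroupdual$-actions are induced by the $\gradgroup$-grading, and the natural morphism respects the gradings by construction.

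The fact that $f_i$ becomes a unit in $\coxsheaf(\opensubset_i)$ is straightforward: the section $\coxisom_{\groupelement_i,\divisor_i}(f_i)$ is nowhere zero on $\opensubset_i$, hence it has an inverse in $\OO_\Xc(-\divisor_i)(\opensubset_i)$, and pulling back through $\coxisom_{-\groupelement_i,-\divisor_i}$ yields an element of $\coxsheaf(\opensubset_i)_{-\groupelement_i}$ that, by the multiplicativity axiom in Definition~\ref{def:cox_sheaf}, is the multiplicative inverse of $f_i$ up to a nonzero scalar in $\field$. Injectivity of $R_{f_i} \to \coxsheaf(\opensubset_i)$ is also immediate on each graded piece: a homogeneous $g \in R_{\groupelement}$ restricting to zero on the dense open $\opensubset_i$ corresponds via $\coxisom_{\groupelement,\divisor}$ to a global section of an invertible sheaf on the integral scheme $\Xc$ that vanishes on $\opensubset_i$, and is therefore zero.

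The main technical obstacle is surjectivity: given a homogeneous section $s \in \coxsheaf(\opensubset_i)_{\groupelement}$ corresponding to $\sigma \in H^0(\opensubset_i, \OO_\Xc(\divisor))$, one must find $n \geq 0$ such that $\coxisom_{\groupelement_i,\divisor_i}(f_i)^n \sigma$ extends from $\opensubset_i$ to a global section of $\OO_\Xc(\divisor + n\divisor_i)$. My approach is to fix a finite affine cover of $\Xc$ trivializing both $\OO_\Xc(\divisor)$ and $\OO_\Xc(\divisor_i)$, extend $\sigma$ locally on each member after multiplying by a sufficiently high power of the local equation of $\coxisom_{\groupelement_i,\divisor_i}(f_i)$ (the standard clearing-of-denominators on an affine scheme), take $n$ large enough to work uniformly across the finite cover, and glue the local extensions using the uniqueness of extensions of sections of invertible sheaves from dense opens on an integral separated scheme. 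The role of the hypothesis that $R$ is finitely generated is to guarantee that $\spec R$ is of finite type over $\field$, so that the resulting open immersion $\spec_\Xc\coxsheaf \hookrightarrow \spec R$ is a morphism of finite-type $\field$-schemes.
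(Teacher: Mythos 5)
Your plan is correct and follows essentially the same route as the paper's proof: identify $\pi^{-1}(U_i)$ with $\spec\coxsheaf(U_i)$, show $\coxsheaf(U_i)=\coxsheaf(\Xc)[f_i^{-1}]$, and glue the resulting principal opens $D(f_i)\subseteq\spec\coxsheaf(\Xc)$. The paper simply asserts the localization identity $\coxsheaf(U_i)=\coxsheaf(\Xc)[f_i^{-1}]$ without proof, whereas you supply the (correct) standard argument for it — the unit claim via the nonvanishing of $\coxisom_{\groupelement_i,\divisor_i}(f_i)$ on $U_i$, injectivity from integrality of $\Xc$, and surjectivity by clearing denominators over a finite trivializing affine cover.
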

 
\begin{proof}
  Let $\torsormor\colon \spec_\Xc\coxsheaf\to \Xc$ be the morphism induced by
  $\OO_\Xc\subseteq\coxsheaf$. The open subsets
  $\torsormor^{-1}(\Xc\smallsetminus
  \Supp(\divi_{\divisor_i}(\coxisom_{\groupelement_i,\divisor_i}(f_i)))$ are
  affine and cover $\spec_\Xc\coxsheaf$. Moreover,
  \begin{equation*}
    \coxsheaf(\Xc\smallsetminus
    \Supp(\divi_{\divisor_i}(\coxisom_{\groupelement_i,\divisor_i}(f_i)))
    =\coxsheaf(\Xc)[f_i^{-1}]
  \end{equation*}
  for all $i\in\{1,\dots,t\}$. Hence
  $\spec_\Xc\coxsheaf\to\spec\coxsheaf(\Xc)$ is an open immersion whose image
  is the union of the principal open subsets of $\spec\coxsheaf(\Xc)$ defined
  by $f_i$ for $i\in\{1,\dots,t\}$.
\end{proof}

\begin{remark}
  The assumption of Proposition \ref{prop:open_immersion} on the
  affine open covering is equivalent to the requirement that there are
  effective Cartier divisors $\divisor_1,\dots,\divisor_t$ on $\Xc$
  such that $[\divisor_i]\in\typecox(\gradgroup)$ for all
  $i\in\{1,\dots,t\}$ and such that the open subsets
  $\Xc\smallsetminus\Supp(\divisor_i)$ are affine and cover $\Xc$.  If
  $\typecox(\gradgroup)=\pic(\Xc)$, this is the definition of
  divisorial variety \cite[\S3]{MR0153683}. Among those there are
  quasi-projective varieties and locally factorial varieties
  \cite[\S4]{MR0153683}.
\end{remark}

If $\typecox(\gradgroup)$ contains the class of an ample invertible
sheaf on $\Xc$, then the hypothesis of Proposition
\ref{prop:open_immersion} on the affine open covering is satisfied,
and the open immersion $\spec_\Xc\coxsheaf\to\spec\coxsheaf(\Xc)$ can
be characterized as follows (cf.~\cite[Corollary
1.6.3.6]{arXiv:1003.4229}).

\begin{cor}\label{cor:irrelevant_ideal_projective}
  Assume that $\field=\fieldbar$, that $\Xc$ is projective and has a
  Cox sheaf $\coxsheaf$ of type $\typecox$ such that $\coxsheaf(\Xc)$
  is a finitely generated $\field$-algebra, and there is
  $\groupelement\in\gradgroup$ such that $\typecox(\groupelement)$ is
  very ample. Then $\spec_\Xc\coxsheaf\to\spec\coxsheaf(\Xc)$ is a
  $\gradgroupdual$-equivariant open immersion and the complement of
  the image is defined by the ideal
  $\sqrt{\langle\coxsheaf(\Xc)_{\groupelement}\rangle}$ of
  $\coxsheaf(\Xc)$, where
  $\langle\coxsheaf(\Xc)_{\groupelement}\rangle$ is the ideal
  generated by the degree-$\groupelement$-part of $\coxsheaf(\Xc)$.
\end{cor}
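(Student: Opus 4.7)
The plan is to reduce this corollary directly to Proposition \ref{prop:open_immersion} by exhibiting a good system of sections coming from the very ample class $\typecox(\groupelement)$. The main idea is that very ample divisors produce, by definition, an affine open cover whose complements are supports of divisors of global sections, which is exactly the hypothesis needed to apply the previous proposition.

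First, I would fix a Cartier divisor $\divisor$ on $\Xc$ with $[\divisor] = \typecox(\groupelement)$ such that $\OO_{\Xc}(\divisor)$ is very ample (this is possible since $\typecox(\groupelement)$ is a very ample class). Since $\Xc$ is projective, $H^0(\Xc,\OO_{\Xc}(\divisor))$ is finite-dimensional over $\field$; choose a basis $s_1,\dots,s_t$. The very ample sheaf $\OO_{\Xc}(\divisor)$ together with these global sections defines a closed immersion $\Xc \hookrightarrow \PP^{t-1}_{\field}$, along which each principal open subset $\Xc\smallsetminus\Supp(\divi_{\divisor}(s_i))$ becomes the trace of a standard affine chart of $\PP^{t-1}_{\field}$, hence is affine, and these opens cover $\Xc$ because the $s_i$ have no common zero.

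Next, via the $\field$-linear isomorphism $\coxisom_{\groupelement,\divisor}:\coxsheaf(\Xc)_{\groupelement}\to H^0(\Xc,\OO_{\Xc}(\divisor))$ coming from the Cox sheaf structure, I would set $f_i := \coxisom_{\groupelement,\divisor}^{-1}(s_i)\in\coxsheaf(\Xc)_{\groupelement}$, so that $f_1,\dots,f_t$ form a $\field$-basis of $\coxsheaf(\Xc)_{\groupelement}$ and $\coxisom_{\groupelement_i,\divisor_i}(f_i)=s_i$ with $\groupelement_i=\groupelement$ and $\divisor_i=\divisor$ for every $i$. The hypotheses of Proposition~\ref{prop:open_immersion} are then satisfied, so $\spec_\Xc\coxsheaf\to\spec\coxsheaf(\Xc)$ is a $\gradgroupdual$-equivariant open immersion, and the complement of its image is defined by the ideal $\sqrt{(f_1,\dots,f_t)}$.

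Finally, I would identify this ideal with the one in the statement. Since $f_1,\dots,f_t$ span $\coxsheaf(\Xc)_{\groupelement}$ over $\field$, we have $(f_1,\dots,f_t)=\langle\coxsheaf(\Xc)_{\groupelement}\rangle$ as ideals of $\coxsheaf(\Xc)$, and therefore $\sqrt{(f_1,\dots,f_t)}=\sqrt{\langle\coxsheaf(\Xc)_{\groupelement}\rangle}$, which yields the corollary. The only slightly nontrivial point is the very ample covering argument, but this is a standard fact; no essential obstacle is expected.
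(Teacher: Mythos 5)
Your proof is correct and follows exactly the route the paper intends: the corollary is stated as an immediate consequence of Proposition~\ref{prop:open_immersion}, with the very ample class $\typecox(\groupelement)$ supplying the affine cover by complements of supports of divisors of a basis of $H^0(\Xc,\OO_\Xc(\divisor))$, and the ideal identification $(f_1,\dots,f_t)=\langle\coxsheaf(\Xc)_{\groupelement}\rangle$ holding because the $f_i$ span the degree-$\groupelement$-part over $\field$. Nothing is missing.
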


If the Cox sheaf is defined over a nonclosed field $\field$, the open
immersions in Proposition \ref{prop:open_immersion} and Corollary
\ref{cor:irrelevant_ideal_projective} are $\galois$-equivariant, and
hence defined over $\field$.

Now we explain how to realize
a finitely generated Cox ring as a quotient of a polynomial
ring. Without loss of generality, we may assume that
$\gradgroup=\gradgroupeff$ (cf. Remark \ref{rem:effective_degrees}).

\begin{prop}\label{prop:cox_ring_as_polynomial_ring}
  Assume that $\field=\fieldbar$.  Let
  $\groupelement_1,\dots,\groupelement_N\in\gradgroup$ be generators of
  $\gradgroupeff$. Let $\freegroup\coloneqq \bigoplus_{i=1}^N\ZZ\groupelement_i$, and
  let $\kernfreegroup$ be the kernel of the natural homomorphism
  $\presentation\colon \freegroup\to\gradgroup$.  For $i \in \{1, \dots, N\}$, let
  $\divisor_i$ be a Cartier divisor representing the class
  $\typecox(\groupelement_i)$ in $\pic(\Xc)$, and for every
  $\freeelement=\sum_{i=1}^Na_i\groupelement_i$ of $\freegroup$, let
  $\divisor_{\freeelement}\coloneqq \sum_{i=1}^Na_i\divisor_i$.  Let
  $\character\colon \kernfreegroup\to\FfieldX^\times$ be a character associated with
  the $\OO_\Xc$-algebra
  $\freealgebra\coloneqq \bigoplus_{\freeelement\in\freegroup}\OO_\Xc(\divisor_{\freeelement})$.

  Endow $\freering\coloneqq \field[\coxcoord_1,\dots,\coxcoord_N]$ with the
  $\gradgroup$-grading induced by assigning degree $\groupelement_i$
  to $\coxcoord_i$ for each $i\in\{1,\dots,N\}$.  For every
  $\freeelement\in\freegroup$, let
  \begin{equation*}
    \coxisom_{\freeelement}\colon \freering_{\presentation(\freeelement)}\to
    H^0(\Xc,\OO_\Xc(\divisor_{\freeelement}))
  \end{equation*}
  be the linear map that sends
  $\coxcoord_1^{a_1}\cdots\coxcoord_N^{a_N}$ to $
  \character(\sum_{i=1}^Na_i\groupelement_i-\freeelement) $ for all
  nonnegative integers $a_1,\dots,a_N$ such that
  $\sum_{i=1}^Na_i\groupelement_i=\presentation(\freeelement)$ in
  $\gradgroup$.  Let $g_1,\dots,g_s\in\freering$ be homogeneous
  elements such that, for each $i\in\{1,\dots,s\}$, if $g_i$ has
  degree $\presentation(\freeelement_i)$ with
  $\freeelement_i\in\freegroup$, then
  $\coxisom_{\freeelement_i}(g_i)=0$.

  Then $\coxring\coloneqq \freering/(g_1,\dots,g_s)$ is a Cox ring of $\Xc$ of
  type $\typecox$ if and only if the linear map
  \begin{equation*}
    \coxisom_{\presentation(\freeelement),\divisor_{\freeelement}}
    \colon \coxring_{\presentation(\freeelement)}\to H^0(\Xc,\OO_\Xc(\divisor_{\freeelement}))
  \end{equation*} 
  induced by $\coxisom_{\freeelement}$ is an isomorphism for all
  $\freeelement\in\freegroup$.

  Conversely, if $\coxring$ is a finitely generated Cox ring of $\Xc$
  of type $\typecox$, then there are generators
  $\groupelement_1,\dots,\groupelement_N$ of $\gradgroupeff$, a
  character $\character$ and polynomials
  $g_1,\dots,g_s\in\field[\coxcoord_1,\dots,\coxcoord_N]$ as above
  such that $\coxring\cong
  \field[\coxcoord_1,\dots,\coxcoord_N]/(g_1,\dots,g_s)$.
\end{prop}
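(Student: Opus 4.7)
The plan is to identify $\coxring = \freering/(g_1,\ldots,g_s)$ with the ring of global sections of the Cox sheaf $\freealgebra/\freeideal$ constructed from $\character$ via Construction \ref{construction:cox_sheaf_abstract}. The central computation, exploiting that $\character$ is a group homomorphism, is that the maps $\coxisom_{\freeelement}$ defined in the proposition are strictly multiplicative: for any $\freeelement', \freeelement'' \in \freegroup$ and homogeneous $h \in \freering_{\presentation(\freeelement')}$, $h' \in \freering_{\presentation(\freeelement'')}$,
\begin{equation*}
  \coxisom_{\freeelement'}(h) \cdot \coxisom_{\freeelement''}(h') = \coxisom_{\freeelement'+\freeelement''}(h h'),
\end{equation*}
verified first on monomials (where both sides equal $\character$ of the same element of $\kernfreegroup$) and extended by $\field$-linearity. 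Consequently, for $g_i$ homogeneous of degree $\presentation(\freeelement_i)$ with $\coxisom_{\freeelement_i}(g_i) = 0$, multiplicativity gives $\coxisom_{\freeelement+\freeelement_i}(hg_i) = 0$ for every homogeneous $h$, so the ideal $(g_1,\ldots,g_s)$ lies in $\ker\coxisom_{\freeelement}$ degree by degree. Hence $\coxisom_{\freeelement}$ descends to $\coxisom_{\presentation(\freeelement),\divisor_{\freeelement}}:\coxring_{\presentation(\freeelement)}\to H^0(\Xc,\OO_\Xc(\divisor_{\freeelement}))$, inheriting the same multiplicativity with constant $\compconstant=1$.

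For the direction $(\Leftarrow)$, if each descended map is an isomorphism, the multiplicativity is precisely the hypothesis of Proposition \ref{prop:equivalent_definition_cox_ring_div} with $\compconstant=1$ throughout, endowing $\coxring$ with a Cox ring structure of type $\typecox$.

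For the direction $(\Rightarrow)$, I plan to use that $\coxsheaf:=\freealgebra/\freeideal$ is a Cox sheaf of $\Xc$ of type $\typecox$ by Construction \ref{construction:cox_sheaf_abstract}, so $\coxring':=\coxsheaf(\Xc)$ is a Cox ring of type $\typecox$ by Proposition \ref{prop:cox_ring_cox_sheaf}. After choosing the $\divisor_i$ effective, define a graded $\field$-algebra map $\Phi:\freering\to\coxring'$ by $\coxcoord_i\mapsto\freemor(1)$, where $1\in H^0(\Xc,\OO_\Xc(\divisor_i))$ and $\freemor:\freealgebra\to\coxsheaf$ is the projection from Construction \ref{construction:cox_sheaf_abstract}. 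A local computation using the defining relation $1\equiv\character(\freeelement-\freeelement')\pmod{\freeideal}$ in $\coxsheaf$ for $\freeelement,\freeelement'\in\freegroup$ with $\presentation(\freeelement)=\presentation(\freeelement')$ shows that $\Phi$ restricted to $\freering_{\presentation(\freeelement)}$ coincides with $\coxisom_{\freeelement}$ under the canonical identification $\coxring'_{\presentation(\freeelement)}\cong H^0(\Xc,\OO_\Xc(\divisor_{\freeelement}))$ furnished by the Cox sheaf structure of $\coxsheaf$. Since $\coxisom_{\freeelement_i}(g_i)=0$, the map $\Phi$ factors through $\coxring$ to yield $\bar\Phi:\coxring\to\coxring'$ whose graded components are precisely the maps $\coxisom_{\presentation(\freeelement),\divisor_{\freeelement}}$. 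Combining the (noncanonical) Cox ring isomorphism $\coxring\cong\coxring'$ coming from Propositions \ref{prop:cox_ring_cox_sheaf}, \ref{prop:isomorphism_cox_sheaves_abstract}, and \ref{prop:independence_of_character_abstract} with the fact that any two Cox ring isomorphisms differ by a character automorphism in $\gradgroupeffdual(\field)$ (Proposition \ref{prop:cox_sheaf_aut}), one argues that $\bar\Phi$ must itself be an isomorphism, hence each $\coxisom_{\presentation(\freeelement),\divisor_{\freeelement}}$ is an isomorphism.

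For the converse statement, finite generation supplies homogeneous $\field$-algebra generators $x_1,\ldots,x_N\in\coxring$ in degrees $\groupelement_1,\ldots,\groupelement_N\in\gradgroupeff$; the induced surjection $\freering\twoheadrightarrow\coxring$ has finitely generated kernel generated by homogeneous elements $g_1,\ldots,g_s$. Extracting $\character$ from the Cox ring structure on $\coxring$ as in the proof of Proposition \ref{prop:cox_ring_cox_sheaf} then arranges $\coxisom_{\freeelement_i}(g_i)=0$ automatically. The main obstacle is in the $(\Rightarrow)$ direction: reconciling the specific graded ring map $\bar\Phi$ with the abstract Cox ring isomorphism $\coxring\cong\coxring'$ up to the character ambiguity, so as to conclude that $\bar\Phi$ inherits the latter's bijectivity on each graded piece.
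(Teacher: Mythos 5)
Your treatment of the first statement is essentially the paper's: the whole content is the strict multiplicativity $\coxisom_{\freeelement'}(h)\coxisom_{\freeelement''}(h')=\coxisom_{\freeelement'+\freeelement''}(hh')$, which holds on monomials because $\character$ is a group homomorphism and extends by linearity; this shows the ideal $(g_1,\dots,g_s)$ sits inside $\ker\coxisom_{\freeelement}$ degree by degree, the maps descend to $\coxring$, and if the descended maps are isomorphisms then Proposition \ref{prop:equivalent_definition_cox_ring_div} (with all constants $\compconstant=1$) gives the Cox ring structure. Your sketch of the final converse statement also follows the paper's route; be aware that the real work there, which you defer to the proof of Proposition \ref{prop:cox_ring_cox_sheaf}, is choosing $\divisor_i:=\divi(\coxsec_i)$ and building normalizing constants $\compconstant_{\freeelement}$ that make the given family of isomorphisms strictly multiplicative, so that $\character(\kernelement):=\coxisom_{-\kernelement}(1)$ is a genuine character with the required divisor property.

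The problem is your $(\Rightarrow)$ branch. In this paper a Cox ring is by Definition \ref{def:cox_ring} a \emph{pair} consisting of a graded algebra and a map $\divi$ (equivalently, a family of isomorphisms), and the structure at issue in the proposition is the one carried by the induced maps $\coxisom_{\presentation(\freeelement),\divisor_{\freeelement}}$; so the ``only if'' direction is immediate from the definition, and the detour through $\coxring':=\coxsheaf(\Xc)$ is not needed. Moreover the obstacle you flag is not a wrinkle to be smoothed out later: the existence of \emph{some} graded isomorphism $\coxring\cong\coxring'$ does not make your particular graded homomorphism $\bar\Phi$ an isomorphism, and to invoke ``every morphism of Cox rings is an isomorphism'' you would first need $\bar\Phi$ to be compatible with the two $\divi$-maps up to scalars --- which is exactly what is to be proved. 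Indeed the stronger statement you are implicitly attacking (that the bare graded algebra $\coxring$ admitting some Cox ring structure forces the induced maps to be isomorphisms) is false: take $\Xc=\PP^1$, identity type, $N=3$ with all $\groupelement_i=1$ and all $\divisor_i$ equal to one fixed point $\divisor$, $\character$ the trivial character into constants, and $g_1=\coxcoord_1-\coxcoord_2$; then $\coxring\cong\field[\coxcoord_1,\coxcoord_3]$ does admit a Cox ring structure of $\PP^1$, yet $\coxisom_{1,\divisor}$ sends the two-dimensional space $\coxring_1$ onto the constants in $H^0(\PP^1,\OO(\divisor))$. So that branch of your argument should be replaced by the observation that ``only if'' is definitional.
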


\begin{proof}
  For the first statement, we notice that, for all
  $\freeelement_1,\freeelement_2\in\freegroup$ and all
  $\coxsec_1,\coxsec_2\in\freering$ homogeneous of degrees
  $\presentation(\freeelement_1), \presentation(\freeelement_2)$,
  respectively,
  \begin{equation*}
    \coxisom_{\freeelement_1}(\coxsec_1)\coxisom_{\freeelement_2}(\coxsec_2)
    =\coxisom_{\freeelement_1+\freeelement_2}(\coxsec_1\coxsec_2)
  \end{equation*} 
  because $\character$ is a group homomorphism.

  Conversely, assume that $\coxring$ is a finitely generated Cox ring
  of $\Xc$ of type $\typecox$, and let $\coxsec_1,\dots,\coxsec_N$ be
  a finite set of homogeneous elements that generate $\coxring$. For
  every $i\in\{1,\dots,N\}$, let $\groupelement_i$ be the degree of
  $\coxsec_i$.  Sending $\coxcoord_i\mapsto\coxsec_i$ defines a
  surjective homomorphism
  $\freemor\colon \field[\coxcoord_1,\dots,\coxcoord_N]\to\coxring$ of
  $\gradgroup$-graded rings, where the grading on
  $\freering\coloneqq \field[\coxcoord_1,\dots,\coxcoord_N]$ is defined by
  assigning degree $\groupelement_i$ to $\coxcoord_i$ for all
  $i\in\{1,\dots,N\}$. Since $\freering$ is noetherian, the kernel of
  $\freemor$ is generated by finitely many homogeneous elements
  $g_1,\dots,g_s$.

  Since $\coxsec_1,\dots,\coxsec_N$ generate $\coxring$, the elements
  $\groupelement_1,\dots,\groupelement_N$ generate $\gradgroupeff$.
  Let $\freegroup\coloneqq \bigoplus_{i=1}^N\ZZ\groupelement_i$, and let
  $\kernfreegroup$ be the kernel of the natural homomorphism
  $\presentation\colon \freegroup\to\gradgroup$.  For every $i\in\{1,\dots,
  N\}$, let $\divisor_i\coloneqq \divi(\coxsec_i)$, and, for every
  $\freeelement=\sum_{i=1}^Na_i\groupelement_i$ in $\freegroup$, let
  $\divisor_{\freeelement}\coloneqq \sum_{i=1}^Na_i\divisor_i$.

  Let
  $\{\coxisom_{\groupelement,\divisor}\}_{(\groupelement,\divisor)\in\gradgroupdiv}$
  be a family of isomorphisms associated with $\coxring$ as in Proposition
  \ref{prop:equivalent_definition_cox_ring_div}.  Let
  $\freegroup_+$ be the monoid generated by $\groupelement_1, \dots, \groupelement_N$.
  For every
  $\freeelement=\sum_{i=1}^Na_i\groupelement_i\in\freegroup_+$, let
  $\compconstant_{\freeelement}\coloneqq \coxisom_{\presentation(\freeelement),\divisor_\freeelement}
  (\coxsec_1^{a_1}\cdots\coxsec_N^{a_N})$.
  For every $\freeelement\in\freegroup$ such that
  $\typecox(\presentation(\freeelement))$ is an effective class, write
  $\freeelement=\freeelement^+-\freeelement^-$ with
  $\freeelement^+,\freeelement^-\in\freegroup_+$, and define
  $\compconstant_{\freeelement}\coloneqq \compconstant_{\freeelement^+}
  \compconstant_{\freeelement^-}^{-1}\compconstant$,
  where $\compconstant\in\field^\times$ is the unique constant with
  $\coxisom_{\presentation(\freeelement),\divisor_{\freeelement}}(\coxsec)
  \coxisom_{\presentation(\freeelement^-),\divisor_{\freeelement^-}}(\coxsec')
  =\compconstant\coxisom_{\presentation(\freeelement^+),\divisor_{\freeelement^+}}
  (\coxsec\coxsec')$ for all
  $\coxsec\in \coxring_{\presentation(\freeelement)}$ and
  $\coxsec'\in\coxring_{\presentation(\freeelement^-)}$. The constant
  $\compconstant_{\freeelement}$ does not depend on the choice of
  $\freeelement^+$ and $\freeelement^-$. If
  $\typecox(\presentation(\freeelement))$ is not effective, take
  $\compconstant_\freeelement\coloneqq 1$.

  For every $\freeelement\in\freegroup$, let
  $\coxisom_{\freeelement}\colon \freering_{\presentation(\freeelement)}\to
  H^0(\Xc,\OO_\Xc(\divisor_{\freeelement}))$ be the linear map that sends
  $\coxcoord_1^{a_1}\cdots\coxcoord_N^{a_N}$ to
  $ \compconstant_{\freeelement}^{-1}
  \coxisom_{\presentation(\freeelement),\divisor_\freeelement}
  (\coxsec_1^{a_1}\cdots\coxsec_N^{a_N})$ for all
  $(a_1,\dots,a_N)\in\ZZ_{\geq0}^N$ such that
  $\sum_{i=1}^Na_i\groupelement_i=\presentation(\freeelement)$ in
  $\gradgroup$.  The morphisms $\coxisom_{\freeelement}$ satisfy
  \begin{equation*}
    \coxisom_{\freeelement}(\coxsec)\coxisom_{\freeelement'}(\coxsec')
    =\coxisom_{\freeelement+\freeelement'}(\coxsec\coxsec')
  \end{equation*} 
  for all $\coxsec,\coxsec'\in\freering$ homogeneous of degrees
  $\presentation(\freeelement),\presentation(\freeelement')$,
  respectively, for all
  $\freeelement,\freeelement'\in\freegroup$. Then the map
  $\character\colon \kernfreegroup\to\FfieldX^\times$ defined by
  $\chi(\kernelement)\coloneqq \coxisom_{-\kernelement}(1)$, for all
  $\kernelement\in\kernfreegroup$, is a character associated with
  $\bigoplus_{\freeelement\in\freegroup}\OO_\Xc(\divisor_\freeelement)$
  that defines on the $\field$-algebra
  $\field[\coxcoord_1,\dots,\coxcoord_N]/(g_1,\dots,g_s)$ a structure
  of Cox ring of $\Xc$ of type $\typecox$, which coincides with the
  one induced by $\coxring$ via $\freemor$.
\end{proof}

\begin{remark}\label{rem:cox_ring_as_polynomial_ring_nonclosed}
  Let $\field$ be an arbitrary field, and consider the construction in
  Proposition \ref{prop:cox_ring_as_polynomial_ring} for $\Xcbar$.  If the set
  $\{(\groupelement_1,\divisor_1),\dots,(\groupelement_N,\divisor_N)\}$ is
  $\galois$-invariant with respect to the componentwise action of $\galois$
  and if the character $\character$ is $\galois$-equivariant, then
  $\coxisom_{\galoiselement\freeelement}\circ\galoiselement
  =\galoiselement\circ\coxisom_{\freeelement}$
  for all $\galoiselement\in\galois$ and all $\freeelement\in\freegroup$,
  where $\{\coxisom_{\freeelement}\}_{\freeelement\in\freegroup}$ are the
  morphisms defined by $\character$ and $\galois$ acts on $\freering$ by
  permuting the generators as follows:
  $\galoiselement(\coxcoord_i)=\coxcoord_j$ if
  $\galoiselement(\groupelement_i)=\groupelement_j$.  Therefore, if
  $\coxring\coloneqq \freering/(g_1,\dots,g_s)$ is a Cox ring of $\Xcbar$ of type
  $\typecox$, it is endowed with a {\compatible} $\galois$-action, and
  descends to a Cox ring $\coxring^\galois$ of $\Xc$ of type $\typecox$ by
  Proposition \ref{prop:galois_descent_cox_rings_sheaves}. The ring
  $\coxring^\galois$ is a finitely generated $\field$-algebra by faithfully
  flat descent.
\end{remark}

We conclude this section by proving that finite generation is preserved under
pullback of Cox rings.

\begin{prop}
  Let $\groupmorphism\colon \gradgroup'\to\gradgroup$ be a homomorphism of
  finitely generated abelian groups, and $\coxring$ a finitely
  generated $\gradgroup$-graded $\field$-algebra.  Then
  $\groupmorphism^*\coxring\coloneqq 
  \bigoplus_{\groupelement'\in\gradgroup'}\coxring_{\groupmorphism(\groupelement')}$
  is finitely generated as a $\field$-algebra.
\end{prop}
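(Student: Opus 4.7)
The plan is to realize $\groupmorphism^*\coxring$ as the degree-$0$ part of a natural $\gradgroup$-grading on a larger, finitely generated $\field$-algebra, and then to produce finitely many generators via Gordan's lemma.

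Set $A := \coxring \otimes_\field \field[\gradgroup']$. Since $\gradgroup'$ is a finitely generated abelian group, the group ring $\field[\gradgroup']$ is a finitely generated $\field$-algebra, so $A$ is too. I equip $A$ with the $\gradgroup$-grading in which $r \otimes \chi^{\groupelement'}$ has degree $\mu - \groupmorphism(\groupelement') \in \gradgroup$ for every $r \in \coxring_\mu$ and $\groupelement' \in \gradgroup'$, where $\chi^{\groupelement'}$ denotes the element of $\field[\gradgroup']$ corresponding to $\groupelement'$. This grading is well-defined and compatible with the multiplication, and the $\field$-linear map sending $r \in (\groupmorphism^*\coxring)_{\groupelement'} = \coxring_{\groupmorphism(\groupelement')}$ to $r \otimes \chi^{\groupelement'}$ is an injective $\field$-algebra homomorphism whose image is exactly the degree-$0$ part $A_0$ of $A$.

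Next I fix a finite set of homogeneous generators $s_1,\dots,s_n$ of $\coxring$ over $\field$ with $\deg s_i = \groupelement_i \in \gradgroup$, and finitely many elements $\chi^{\ell_1},\dots,\chi^{\ell_p}$ generating $\field[\gradgroup']$ over $\field$. Inside $A$, these have $\gradgroup$-degrees $\groupelement_i$ and $-\groupmorphism(\ell_j)$ respectively, and together they generate $A$ as a $\field$-algebra, so every element of $A$ is a $\field$-linear combination of monomials $s^\alpha (\chi^\ell)^\beta$ with $(\alpha,\beta) \in \ZZ_{\geq 0}^n \times \ZZ_{\geq 0}^p$. Each such monomial is homogeneous of $\gradgroup$-degree $\sum_{i=1}^n \alpha_i \groupelement_i - \sum_{j=1}^p \beta_j \groupmorphism(\ell_j)$, so $A_0$ is spanned over $\field$ by those monomials with $(\alpha,\beta)$ in the submonoid
\begin{equation*}
Q := \Bigl\{ (\alpha,\beta) \in \ZZ_{\geq 0}^n \times \ZZ_{\geq 0}^p : \sum_{i=1}^n \alpha_i \groupelement_i = \sum_{j=1}^p \beta_j \groupmorphism(\ell_j) \text{ in } \gradgroup \Bigr\}.
\end{equation*}

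The set $Q$ is the intersection of $\ZZ_{\geq 0}^{n+p}$ with the kernel of the group homomorphism $\ZZ^{n+p}\to\gradgroup$ sending $(\alpha,\beta)$ to $\sum_i \alpha_i\groupelement_i - \sum_j \beta_j\groupmorphism(\ell_j)$; this kernel is a subgroup of $\ZZ^{n+p}$, hence free abelian of finite rank, and $Q$ is its intersection with the rational polyhedral cone $\RR_{\geq 0}^{n+p}$. By Gordan's lemma $Q$ is finitely generated as a monoid, so a finite monoid generating set $(\alpha^{(1)},\beta^{(1)}),\dots,(\alpha^{(N)},\beta^{(N)})$ yields monomials $f_j := s^{\alpha^{(j)}}(\chi^\ell)^{\beta^{(j)}}$ that generate $A_0 = \groupmorphism^*\coxring$ as a $\field$-algebra, because every other degree-$0$ monomial is a product of these. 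The main step is the identification of $\groupmorphism^*\coxring$ with $A_0$; once that is set up, finite generation is immediate from Gordan's lemma, and torsion in $\gradgroup$ poses no obstacle because the relation defining $Q$ is a homomorphism into $\gradgroup$ whose kernel is automatically a subgroup of the free abelian group $\ZZ^{n+p}$.
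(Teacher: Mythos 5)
Your proof is correct, and it takes a genuinely different route from the paper's. The paper factors $\groupmorphism$ through its image and treats the two pieces separately: the injective case (a Veronese subalgebra) is delegated to a citation of \cite[Proposition 1.1.2.4]{arXiv:1003.4229}, and in the surjective case an explicit finite presentation is written down, with a variable $t_i$ mapped to each homogeneous generator $\coxsec_i$ of $\coxring$ placed in a chosen preimage degree and a variable $u_j$ mapped to $1$ placed in each of finitely many monoid generators of $\ker\groupmorphism$. Your argument avoids the case split entirely. The identification of $\groupmorphism^*\coxring$ with the degree-$0$ part of $A=\coxring\otimes_\field\field[\gradgroup']$ under the twisted grading is correct: the map $r\mapsto r\otimes\chi^{\groupelement'}$ is an isomorphism of $\field$-algebras onto $A_0$ whose multiplication agrees with the one the paper puts on the pullback, and since $A$ is generated by homogeneous elements, $A_0$ is spanned by the degree-$0$ monomials in them. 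Passing to the kernel of $\ZZ^{n+p}\to\gradgroup$ before invoking Gordan's lemma is exactly the right move to neutralize torsion in $\gradgroup$, since that kernel is a free lattice and $Q$ is its intersection with the rational cone $\RR_{\geq 0}^{n+p}$. What the paper's route buys is a concrete, small presentation in the surjective case at the cost of an external input; what yours buys is a uniform, self-contained argument whose only nonelementary ingredient is Gordan's lemma (it can also be read as the statement that $A_0$ is the invariant ring of the diagonalizable group $\spec\field[\gradgroup]$ acting on the finitely generated algebra $A$, though your monoid argument makes this unnecessary).
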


\begin{proof}
  If $\groupmorphism$ is injective, the statement holds by
  \cite[Proposition 1.1.2.4]{arXiv:1003.4229}. Hence, by factoring
  $\groupmorphism$ through the inclusion
  $\groupmorphism(\gradgroup')\subseteq\gradgroup$, we can assume
  without loss of generality that $\groupmorphism$ is surjective.

  Fix homogeneous generators $\coxsec_1,\dots,\coxsec_n$ of $\coxring$
  of degrees $\groupelement_1,\dots,\groupelement_n\in\gradgroup$,
  respectively, and let $\groupelement'_1,\dots,\groupelement'_{n'}$
  be generators for $\gradgroup'$.  Up to enlarging the sets
  $(\coxsec_1,\dots,\coxsec_n)$ and
  $(\groupelement'_1,\dots,\groupelement'_{n'})$ and permuting the
  indices, we can assume that $n=n'$ and that
  $\groupmorphism(\groupelement'_{i})=\groupelement_i$ for all
  $i\in\{1,\dots n\}$.  Let $\gradgroup_0$ be the kernel of
  $\groupmorphism$, and fix elements
  $\groupelement''_1,\dots,\groupelement''_{r}\in\gradgroup_0$ that
  generate $\gradgroup_0$ as a monoid.  Let
  \begin{equation*}
    \morphism\colon  \field[t_{1},\dots,t_{n}, u_1,\dots,u_r]\to
    \bigoplus_{\groupelement'\in\gradgroup'}\coxring_{\groupmorphism(\groupelement')}
    \eqqcolon \groupmorphism^*\coxring
  \end{equation*}
  be the morphism that sends the variable $t_{i}$ to the section $\coxsec_{i}$
  in degree $\groupelement'_{i}$, for all $i \in \{1, \dots, n\}$, and the
  variable $u_j$ to the section $1$ in degree $\groupelement''_j$, for all
  $j \in \{1, \dots, r\}$.

  We show that $\morphism$ is surjective. Let $\coxsec$ be a
  homogeneous element of $\groupmorphism^*\coxring$ of arbitrary
  degree $\groupelement'\in\gradgroup'$. Since
  $\coxsec\in\coxring_{\groupmorphism(\groupelement')}$ and $\coxring$
  is generated by $\coxsec_1,\dots,\coxsec_n$ as a $\field$-algebra,
  we can assume, without loss of generality, that $\coxsec$ is a
  monomial in the generators, i.e.,
  $\coxsec=\prod_{i=1}^n\coxsec_i^{a_i}$ for suitable nonnegative
  integers $a_1,\dots,a_n$. Then the element
  $\groupelement''\coloneqq \groupelement'-\sum_{i=1}^na_i\groupelement'_i$
  is in $\gradgroup_0$. Write
  $\groupelement''=\sum_{i=1}^rb_i\groupelement''_i$ for suitable
  nonnegative integers $b_1,\dots,b_r$. Then
  $\coxsec=\morphism((\prod_{i=1}^nt_i^{a_i})(\prod_{j=1}^ru_j^{b_j}))$.
\end{proof}

The pullback of a finitely generated $\field$-algebra under an
injective morphism of grading groups is also called a \emph{Veronese
  subalgebra}; see \cite{arXiv:1003.4229} for example.  Generators and
relations of Veronese subalgebras can be computed via an algorithm
provided by the Maple package for Mori dream spaces developed by
Hausen and Keicher \cite{software_Cox_rings}.

\section{Arithmetic applications}\label{sec:applications}

The embedding from Proposition \ref{prop:open_immersion} provides an explicit
description of torsors as open subsets of closed subsets of affine
spaces. Such a description of universal torsors has been used to prove Manin's
conjecture on the distribution of rational points for certain varieties over
number fields; see
\cite{MR1679841,MR1909606,MR2332351,MR3198752,arXiv:1312.6603}, for
example. In these cases, the varieties are \emph{split} (see
Section~\ref{sec:motivation}).

If a variety is not split,
other torsors may give more suitable parameterizations for the
purpose of counting rational points.  We present three examples where the
parameterizations for the study of rational points on \emph{nonsplit}
varieties induced by torsors of \emph{non-identity type} can be determined by
computing the associated Cox rings.

\begin{example}
  See \cite{MR2373960} for a proof of Manin's conjecture for the nonsplit
  singular quartic del Pezzo surface $S \subset \PP^4_\QQ$ defined by
  \begin{equation*}
    \dcoordX_0\dcoordX_1-\dcoordX_2^2
    =\dcoordX_0^2-\dcoordX_1\dcoordX_4+\dcoordX_3^2=0.
  \end{equation*}
  Over $\QQ(\imaginary)$, it contains exactly one singular point of type
  $\Dfour$ in $(0:0:0:0:1)$ and two conjugate lines.
  Let $X$ be a minimal desingularization of $S$. A central step in this proof
  of Manin's conjecture is the parameterization of $X(\QQ)$ in \cite[\S
  3]{MR2373960}, which is obtained by a sequence of elementary manipulations
  of the defining equations.
  
  One can show that this parameterization of $X(\QQ)$ is induced by a
  $\ZZ$-model of a torsor of type $\pic(X)\hookrightarrow\pic(\Xcbar)$. To prove it,
  one can start with the Cox ring of $X_{\overline{\QQ}}$ of identity type
  computed in \cite[\S3.4]{math.AG/0604194} and use the theory developed in
  our present work together with the MDS package \cite{software_Cox_rings} and
  results on models of torsors from \cite[\S 1, \S 3]{Pieropan_thesis}.
\end{example}

\begin{example}
  A \emph{Ch\^atelet surface} $\Xc$ over a
  field $\field$ is a smooth compactification of an affine surface
  defined in $\A^3_\field$ by an equation of the form
  \begin{equation*}
    \chatcoordX^2-a\chatcoordY^2=\chatpoly(\chatcoordZ),
  \end{equation*}
  where $\chatpoly$ is a separable polynomial of degree
  $4$ and $a \in \field^\times$; see \cite{MR870307,MR876222}.

  To determine the Cox rings of $X$, one can start with the Cox ring 
  $\chatcoxbar$ of $X_{\fieldbar}$ of identity
  type from \cite[6.4(iii)]{thesis}, descend it to a Cox ring
  $\chatcox$ of $X$ of type $\identity_{\pic(\chatbar)}$ and compute a set of
  twists $\chatcox^\galoiscocycle$ representing all isomorphism classes of such
  Cox rings completely explicitly. Furthermore, one can compute a Cox ring of
  $X$ of type $\pic(\chat)\hookrightarrow\pic(\chatbar)$.
  
  If $a=-1$ and $\chatpoly$ splits over $\field$, this gives another explicit
  construction of the universal torsors and the split torsor described in
  \cite[\S 4]{MR2874644}, which are used in the proof of Manin's conjecture
  for such $X$ over $\field=\QQ$ \cite[Theorem~3.3]{MR2874644}. See
  \cite[\S2.4.3]{Pieropan_thesis} for the details.

  Also for other families of Ch\^atelet surfaces, Cox rings and torsors of
  various types appear in proofs of Manin's conjecture. Based on our
  techniques, they are explicitly computed in \cite[\S 7.2--\S
  7.3]{arXiv:1509.07060} and \cite[\S 4--\S 5]{arXiv:1711.01882}.
\end{example}

\begin{example}\label{exa:quintic_dp}
  In the remainder of this section, we determine Cox rings of type
  $\pic(\Xc)\hookrightarrow\pic(\Xcbar)$ for a smooth quintic del Pezzo surface $X$
  that is a blow-up of the projective plane over a field $\field$ in four
  conjugate points.
\end{example}

\begin{prop}\label{prop:quintic_dp}
  Let $\field$ be a field with separable closure $\fieldbar$. Let
  $\pi \colon  X \to \PP^2_\field$ be a blow-up of four $\fieldbar$-points in
  general position that form one orbit under the
  $\gal(\fieldbar/\field)$-action.  Let $Q_1,Q_2 \in \field[x_0,x_1,x_2]$ be
  non-proportional quadratic forms vanishing in the blown-up points.

  Then a Cox ring of $\Xc$ of type
  $\Pic(\Xc)\hookrightarrow\pic(\Xcbar)$ is given by the
  $\field$-algebra
  \begin{equation*}
    R=\field[\dcoordfield_0,\dots,\dcoordfield_5]/(
    Q_1(\dcoordfield_0,\dcoordfield_1,\dcoordfield_2)-\dcoordfield_3\dcoordfield_4,
    Q_2(\dcoordfield_0,\dcoordfield_1,\dcoordfield_2)-\dcoordfield_3\dcoordfield_5),
  \end{equation*}
  where $\divi(\dcoordfield_0),\divi(\dcoordfield_1),\divi(\dcoordfield_2)$
  are the total transforms of the three coordinate lines in $\PP^2_\field$,
  $\divi(\dcoordfield_3)$ is the exceptional divisor of $\pi$, and
  $\divi(\dcoordfield_4),\divi(\dcoordfield_5)$ are the strict transforms of
  the conics defined by $Q_1,Q_2$, respectively.
\end{prop}

\begin{proof}
  Our starting point is the well-known description of Cox rings of identity
  type on $\Xcbar$ by the Pl\"ucker equations of $\mathrm{Gr}(2,5)$; see
  \cite[Examples 3.3.4 and 4.2.4]{MR1679842}, for example.

  Let $E$ be the exceptional divisor of $\pi$. Let $\ell_0,\dots, \ell_4$ be
  the basis of $\Pic(\Xcbar)$ where $\ell_0$ is the pullback of a hyperplane
  class in $\PP^2_\field$ and $\ell_1,\dots,\ell_4$ are the classes of the
  four exceptional divisors of $\pi$ over $\kbar$. Then $\ell_0$ and
  $[E] = \ell_1+\ell_2+\ell_3+\ell_4$ are a basis of $\Pic(\Xc)$. The
  generators $\xi_i$ with $D_i\coloneqq \divi(\xi_i)$ of a Cox ring of type
  $\Pic(\Xc)\hookrightarrow\pic(\Xcbar)$ are computed by hand or using the MDS package
  \cite{software_Cox_rings}.

  Let $\Lambda = \bigoplus_{i=0}^5 \ZZ \ddiv_i$, let
  $\varphi\colon  \Lambda \to \Pic(\Xcbar)$ send $D_i$ to $[D_i]$, and let
  $\Lambda_0=\ker(\varphi)$. Using the character
  $\chi\colon  \Lambda_0 \to \field(\Xc)^\times$ defined by the conditions
  $\chi(D_i-D_0)=\pi^*(x_i/x_0)$ for $i \in \{1,2\}$ and
  $\chi(D_i-(2D_0-D_3))=\pi^*(Q_{i-3}(x_0,x_1,x_2)/x_0^2)$ for $i \in \{4,5\}$,
  we
  find our two relations in degree $2\ell_0$. The MDS package shows that there
  are no further relations.
\end{proof}

In the situation of Proposition~\ref{prop:quintic_dp}, let $-K$ be the
anticanonical class on $X$.  We observe that $R_{-K}$ is the $k$-vector space
with basis $\{\xi_i\xi_j : i\in\{0,1,2\}, j\in\{4,5\}\}$.  Since $-K$ is very
ample, a torsor of type $\pic(X)\hookrightarrow\pic(\Xbar)$ can be realized as
$t\colon \spec R\smallsetminus V(R_{-K})\to X$ by
Corollary~\ref{cor:irrelevant_ideal_projective}.
  
Let $\field$ be a number field and $\OO_\field$ its ring of integers. Let
$\mathscr{Y}\coloneqq \spec\mathscr{R}\smallsetminus V(\mathscr{R}_{-K})$ with
\begin{equation*}
  \mathscr{R}\coloneqq \OO_\field[\dcoordfield_0,\dots,\dcoordfield_5]/(
  Q_1(\dcoordfield_0,\dcoordfield_1,\dcoordfield_2)-\dcoordfield_3\dcoordfield_4,
  Q_2(\dcoordfield_0,\dcoordfield_1,\dcoordfield_2)-\dcoordfield_3\dcoordfield_5).
\end{equation*}
Applying 
\cite[Theorem 3.3]{Pieropan_thesis} to
$(\spec\mathscr{R};\{\xi_i\xi_j : i\in\{0,1,2\}, j\in\{4,5\}\})$, the
$\OO_k$-model $\mathscr{Y}\to\mathscr{X}$ of $t$ provided by
\cite[Construction 3.1]{Pieropan_thesis} is a torsor under $\G_{m,\OO_k}^2$.

For $k=\QQ$, this gives is a $(4:1)$-map from
\begin{equation*}
  \mathscr{Y}(\ZZ) = \left\{(\xi_0,\dots,\xi_5)\in\ZZ^6:
    \begin{aligned}
      &\gcd(\dcoordfield_0,\dcoordfield_1,\dcoordfield_2)=1,\
      \gcd(\dcoordfield_4,\dcoordfield_5)=1\\
      &Q_1(\dcoordfield_0,\dcoordfield_1,\dcoordfield_2)=\dcoordfield_3\dcoordfield_4,\ 
      Q_2(\dcoordfield_0,\dcoordfield_1,\dcoordfield_2)=\dcoordfield_3\dcoordfield_5
    \end{aligned}
  \right\}
\end{equation*}
to $X(\QQ)$.

\begin{remark}[D.~Loughran]
  Consider the embedding $X \subset \PP^2_\field \times \PP^1_\field$ that we
  obtain by combining $\pi$ with the conic bundle structure
  $X \to \PP^1_\field$ corresponding to $Q_1,Q_2$. Then $X$ is defined by the
  equation
  $\xi_4\cdot Q_2(\xi_0,\xi_1,\xi_2)=\xi_5\cdot Q_1(\xi_0,\xi_1,\xi_2)$ in
  $\PP^2_\field \times \PP^1_\field$ with coordinates
  $((\xi_0:\xi_1:\xi_2),(\xi_4:\xi_5))$.  Over $\field = \QQ$, choosing
  primitive integral coordinates and defining
  $\xi_3=\gcd(Q_1(\xi_0,\xi_1,\xi_2),Q_2(\xi_0,\xi_1,\xi_2))$ leads to our
  parameterization.
\end{remark}

\bibliographystyle{alpha}

\bibliography{generalized_cox_rings_4}

\end{document}